\newcommand{\Gnp}{G_{n,p}}
\newcommand{\Gnm}{G_{n,m}}
\newcommand{\Gminus}{G^{-}}
\newcommand{\Gplus}{G^+}
\newcommand{\E}{{\mathbb E}}
\newcommand{\vol}{{\rm vol}}
\newcommand{\Bin}{{\rm Bin}}
\newcommand{\eps}{\varepsilon}
\renewcommand{\AA}{ \mathcal{A} }
\newcommand{\cB}{\mathcal{B}}
\newcommand{\cA}{\mathcal{A}}
\newcommand{\cT}{\mathcal{T}}
\newcommand{\cG}{\mathcal{G}}
\newcommand{\q}{q^*}
\newcommand{\sqA}{sq_{\cA}}
\newcommand{\sq}{sq^*}
\newcommand{\cq}{cq^*}
\newcommand{\cqA}{cq_{\cA}}
\newcommand{\tq}{\tilde{q}^*}
\newcommand{\lfqA}{l\!fq_{\cA}}
\newcommand{\lfq}{l\!fq^*}
\newcommand{\tp}{\tilde{p}}
\newcommand{\cyclo}{\mathrm{cyclo}}
\newcommand{\isol}{\mathrm{isol}}
\newtheorem{thm}{Theorem}
\newtheorem{defn}{Definition}
\newtheorem{clm}[thm]{Claim}
\newtheorem{rem}[thm]{Remark}
\newtheorem{lemma}[thm]{Lemma}
\newtheorem{prop}[thm]{Proposition}
\newtheorem{lem}[thm]{Lemma}
\newtheorem{question}[thm]{Question}
\newtheorem{example}[thm]{Example}
\numberwithin{equation}{section}
\newcommand{\pr}{{\mathbb P}}
\renewcommand{\E}{{\mathbb E}}
\newcommand{\var}{{\rm var}}
\renewcommand{\Bin}{{\rm Bin}}
\newcommand{\cH}{{\mathcal H}}
\renewcommand{\eps}{\varepsilon}
\newcommand{\kfourisolb}{
  \begin{tikzpicture}[baseline=-0.6ex,scale=0.25]
  \tikzstyle{vertex}=[circle,fill=black, minimum size=2pt,inner sep=1pt]
  \node[vertex] (v1) at (-0.5, 0.5){};
  \node[vertex] (v2) at (-0.5,-0.5){};
  \node[vertex] (v3) at (0.5, 0.5){};
  \node[vertex] (v4) at (0.5,-0.5){};
  \node[vertex] (v5) at (1.5,-0.0){};
  \draw (v1)--(v2)--(v3)--(v4)--(v1)--(v3) (v2)--(v4);
  \end{tikzpicture}}
\newcommand{\threepathb}{
  \begin{tikzpicture}[baseline=-0.6ex,scale=0.25]
  \tikzstyle{vertex}=[circle,fill=black, minimum size=2pt,inner sep=1pt]
  \node[vertex] (v1) at (-0.5, 0.5){};
  \node[vertex] (v2) at (-0.5,-0.5){};
  \node[vertex] (v3) at (0.5, 0.5){};
  \node[vertex] (v4) at (0.5,-0.5){};
  \draw (v1)--(v3);
  \draw (v1)--(v2);
  \draw (v2)--(v4);
  \end{tikzpicture}}
\newcommand{\kfour}{
  \begin{tikzpicture}[baseline=-0.6ex,scale=0.25]
  \tikzstyle{vertex}=[circle,fill=black, minimum size=2pt,inner sep=1pt]
  \node[vertex] (v1) at (-0.5, 0.5){};
  \node[vertex] (v2) at (-0.5,-0.5){};
  \node[vertex] (v3) at (0.5, 0.5){};
  \node[vertex] (v4) at (0.5,-0.5){};
  \draw (v1)--(v2)--(v3)--(v4)--(v1)--(v3) (v2)--(v4);
  \end{tikzpicture}}
  \newcommand{\copawc}{
  \begin{tikzpicture}[baseline=-0.6ex,scale=0.25]
  \tikzstyle{vertex}=[circle,fill=black, minimum size=2pt,inner sep=1pt]
  \node[vertex] (v1) at (-0.5, 0.5){};
  \node[vertex] (v2) at (-0.5,-0.5){};
  \node[vertex] (v3) at (0.5, 0.5){};
  \node[vertex] (v4) at (0.5,-0.5){};
  \draw (v3)--(v2)--(v4);
  \end{tikzpicture}}
\newcommand{\copawd}{
  \begin{tikzpicture}[baseline=-0.6ex,scale=0.25]
  \tikzstyle{vertex}=[circle,fill=black, minimum size=2pt,inner sep=1pt]
  \node[vertex] (v1) at (-0.5, 0.5){};
  \node[vertex] (v2) at (-0.5,-0.5){};
  \node[vertex] (v3) at (0.5, 0.5){};
  \node[vertex] (v4) at (0.5,-0.5){};
  \draw (v3)--(v1)--(v4);
  \end{tikzpicture}}
\newcommand{\copawe}{
  \begin{tikzpicture}[baseline=-0.6ex,scale=0.25]
  \tikzstyle{vertex}=[circle,fill=black, minimum size=2pt,inner sep=1pt]
  \node[vertex] (v1) at (-0.5, 0.5){};
  \node[vertex] (v2) at (-0.5,-0.5){};
  \node[vertex] (v3) at (0.5, 0.5){};
  \node[vertex] (v4) at (0.5,-0.5){};
  \draw (v1)--(v2)--(v4);
  \end{tikzpicture}}
\newcommand{\copawf}{
  \begin{tikzpicture}[baseline=-0.6ex,scale=0.25]
  \tikzstyle{vertex}=[circle,fill=black, minimum size=2pt,inner sep=1pt]
  \node[vertex] (v1) at (-0.5, 0.5){};
  \node[vertex] (v2) at (-0.5,-0.5){};
  \node[vertex] (v3) at (0.5, 0.5){};
  \node[vertex] (v4) at (0.5,-0.5){};
  \draw (v1)--(v2)--(v3);
  \end{tikzpicture}}
\newcommand{\paw}{
  \begin{tikzpicture}[baseline=-0.6ex,scale=0.25]
  \tikzstyle{vertex}=[circle,fill=black, minimum size=2pt,inner sep=1pt]
  \node[vertex] (v1) at (-0.5, 0.5){};
  \node[vertex] (v2) at (-0.5,-0.5){};
  \node[vertex] (v3) at (0.5, 0.5){};
  \node[vertex] (v4) at (0.5,-0.5){};
  \draw (v1)--(v2)--(v3)--(v1)--(v4);
  \end{tikzpicture}}
\newcommand{\pawb}{
  \begin{tikzpicture}[baseline=-0.6ex,scale=0.25]
  \tikzstyle{vertex}=[circle,fill=black, minimum size=2pt,inner sep=1pt]
  \node[vertex] (v1) at (-0.5, 0.5){};
  \node[vertex] (v2) at (-0.5,-0.5){};
  \node[vertex] (v3) at (0.5, 0.5){};
  \node[vertex] (v4) at (0.5,-0.5){};
  \draw (v2)--(v1)--(v3)--(v4)--(v1);
  \end{tikzpicture}}
\newcommand{\pawc}{
  \begin{tikzpicture}[baseline=-0.6ex,scale=0.25]
  \tikzstyle{vertex}=[circle,fill=black, minimum size=2pt,inner sep=1pt]
  \node[vertex] (v1) at (-0.5, 0.5){};
  \node[vertex] (v2) at (-0.5,-0.5){};
  \node[vertex] (v3) at (0.5, 0.5){};
  \node[vertex] (v4) at (0.5,-0.5){};
  \draw (v1)--(v2)--(v3)--(v4)--(v2);
  \end{tikzpicture}}
\newcommand{\pawd}{
  \begin{tikzpicture}[baseline=-0.6ex,scale=0.25]
  \tikzstyle{vertex}=[circle,fill=black, minimum size=2pt,inner sep=1pt]
  \node[vertex] (v1) at (-0.5, 0.5){};
  \node[vertex] (v2) at (-0.5,-0.5){};
  \node[vertex] (v3) at (0.5, 0.5){};
  \node[vertex] (v4) at (0.5,-0.5){};
  \draw (v4)--(v3)--(v1)--(v2)--(v3);
  \end{tikzpicture}}
\newcommand{\pawe}{
  \begin{tikzpicture}[baseline=-0.6ex,scale=0.25]
  \tikzstyle{vertex}=[circle,fill=black, minimum size=2pt,inner sep=1pt]
  \node[vertex] (v1) at (-0.5, 0.5){};
  \node[vertex] (v2) at (-0.5,-0.5){};
  \node[vertex] (v3) at (0.5, 0.5){};
  \node[vertex] (v4) at (0.5,-0.5){};
  \draw (v3)--(v4)--(v2)--(v1)--(v4);
  \end{tikzpicture}}
\newcommand{\pawh}{
  \begin{tikzpicture}[baseline=-0.6ex,scale=0.25]
  \tikzstyle{vertex}=[circle,fill=black, minimum size=2pt,inner sep=1pt]
  \node[vertex] (v1) at (-0.5, 0.5){};
  \node[vertex] (v2) at (-0.5,-0.5){};
  \node[vertex] (v3) at (0.5, 0.5){};
  \node[vertex] (v4) at (0.5,-0.5){};
  \draw (v4)--(v1)--(v3)--(v4)--(v2);
  \end{tikzpicture}}
\newcommand{\pawbLab}{
  \begin{tikzpicture}[baseline=-0.6ex,scale=0.25]
  \tikzstyle{vertex}=[circle,fill=black, minimum size=2pt,inner sep=1pt]
\tikzstyle{Lvertex}=[circle, draw, minimum size=2pt,inner sep=1pt]
  \node[Lvertex] (v1) at (-0.5, 0.5){};
  \node[Lvertex] (v2) at (-0.5,-0.5){};
  \node[vertex] (v3) at (0.5, 0.5){};
  \node[vertex] (v4) at (0.5,-0.5){};
  \draw (v2)--(v1)--(v3)--(v4)--(v1);
  \end{tikzpicture}}
\newcommand{\pawbLbc}{
  \begin{tikzpicture}[baseline=-0.6ex,scale=0.25]
  \tikzstyle{vertex}=[circle,fill=black, minimum size=2pt,inner sep=1pt]
\tikzstyle{Lvertex}=[circle, draw, minimum size=2pt,inner sep=1pt]
  \node[Lvertex] (v1) at (-0.5, 0.5){};
  \node[vertex] (v2) at (-0.5,-0.5){};
  \node[Lvertex] (v3) at (0.5, 0.5){};
  \node[vertex] (v4) at (0.5,-0.5){};
  \draw (v2)--(v1)--(v3)--(v4)--(v1);
  \end{tikzpicture}}
  \newcommand{\pawbLbdgray}{
  \begin{tikzpicture}[baseline=-0.6ex,scale=0.25]
  \tikzstyle{vertex}=[circle,fill=gray, minimum size=2pt,inner sep=1pt]
\tikzstyle{Lvertex}=[circle, draw, minimum size=2pt,inner sep=1pt]
  \node[Lvertex] (v1) at (-0.5, 0.5){};
  \node[vertex] (v2) at (-0.5,-0.5){};
  \node[vertex] (v3) at (0.5, 0.5){};
  \node[Lvertex] (v4) at (0.5,-0.5){};
  \draw (v2)--(v1)--(v3)--(v4)--(v1);
  \end{tikzpicture}}
\newcommand{\pawbLac}{
  \begin{tikzpicture}[baseline=-0.6ex,scale=0.25]
  \tikzstyle{vertex}=[circle,fill=black, minimum size=2pt,inner sep=1pt]
\tikzstyle{Lvertex}=[circle, draw, minimum size=2pt,inner sep=1pt]
  \node[vertex] (v1) at (-0.5, 0.5){};
  \node[Lvertex] (v2) at (-0.5,-0.5){};
  \node[Lvertex] (v3) at (0.5, 0.5){};
  \node[vertex] (v4) at (0.5,-0.5){};
  \draw (v2)--(v1)--(v3)--(v4)--(v1);
  \end{tikzpicture}}
  \newcommand{\pawbLadgray}{
  \begin{tikzpicture}[baseline=-0.6ex,scale=0.25]
  \tikzstyle{vertex}=[circle,fill=gray, minimum size=2pt,inner sep=1pt]
\tikzstyle{Lvertex}=[circle, draw, minimum size=2pt,inner sep=1pt]
  \node[vertex] (v1) at (-0.5, 0.5){};
  \node[Lvertex] (v2) at (-0.5,-0.5){};
  \node[vertex] (v3) at (0.5, 0.5){};
  \node[Lvertex] (v4) at (0.5,-0.5){};
  \draw (v2)--(v1)--(v3)--(v4)--(v1);
  \end{tikzpicture}}
\newcommand{\pawbLcd}{
  \begin{tikzpicture}[baseline=-0.6ex,scale=0.25]
  \tikzstyle{vertex}=[circle,fill=black, minimum size=2pt,inner sep=1pt]
\tikzstyle{Lvertex}=[circle, draw, minimum size=2pt,inner sep=1pt]
  \node[vertex] (v1) at (-0.5, 0.5){};
  \node[vertex] (v2) at (-0.5,-0.5){};
  \node[Lvertex] (v3) at (0.5, 0.5){};
  \node[Lvertex] (v4) at (0.5,-0.5){};
  \draw (v2)--(v1)--(v3)--(v4)--(v1);
  \end{tikzpicture}}
\newcommand{\fourcycle}{
  \begin{tikzpicture}[baseline=-0.6ex,scale=0.25]
  \tikzstyle{vertex}=[circle,fill=black, minimum size=2pt,inner sep=1pt]
  \node[vertex] (v1) at (-0.5, 0.5){};
  \node[vertex] (v2) at (-0.5,-0.5){};
  \node[vertex] (v3) at (0.5, 0.5){};
  \node[vertex] (v4) at (0.5,-0.5){};
  \draw (v1)--(v2)--(v4)--(v3)--(v1);
  \end{tikzpicture}}
\newcommand{\fourcyclea}{
  \begin{tikzpicture}[baseline=-0.6ex,scale=0.25]
  \tikzstyle{vertex}=[circle,fill=black, minimum size=2pt,inner sep=1pt]
  \node[vertex] (v1) at (-0.5, 0.5){};
  \node[vertex] (v2) at (-0.5,-0.5){};
  \node[vertex] (v3) at (0.5, 0.5){};
  \node[vertex] (v4) at (0.5,-0.5){};
  \draw (v1)--(v3)--(v2)--(v4)--(v1);
  \end{tikzpicture}}
\newcommand{\fourcycleb}{
  \begin{tikzpicture}[baseline=-0.6ex,scale=0.25]
  \tikzstyle{vertex}=[circle,fill=black, minimum size=2pt,inner sep=1pt]
  \node[vertex] (v1) at (-0.5, 0.5){};
  \node[vertex] (v2) at (-0.5,-0.5){};
  \node[vertex] (v3) at (0.5, 0.5){};
  \node[vertex] (v4) at (0.5,-0.5){};
  \draw (v1)--(v4)--(v3)--(v2)--(v1);
  \end{tikzpicture}}
\newcommand{\cofourcycle}{
  \begin{tikzpicture}[baseline=-0.6ex,scale=0.25]
  \tikzstyle{vertex}=[circle,fill=black, minimum size=2pt,inner sep=1pt]
  \node[vertex] (v1) at (-0.5, 0.5){};
  \node[vertex] (v2) at (-0.5,-0.5){};
  \node[vertex] (v3) at (0.5, 0.5){};
  \node[vertex] (v4) at (0.5,-0.5){};
  \draw (v1)--(v2) (v3)--(v4);
  \end{tikzpicture}}
\newcommand{\cofourcycleb}{
  \begin{tikzpicture}[baseline=-0.6ex,scale=0.25]
  \tikzstyle{vertex}=[circle,fill=black, minimum size=2pt,inner sep=1pt]
  \node[vertex] (v1) at (-0.5, 0.5){};
  \node[vertex] (v2) at (-0.5,-0.5){};
  \node[vertex] (v3) at (0.5, 0.5){};
  \node[vertex] (v4) at (0.5,-0.5){};
  \draw (v1)--(v3) (v2)--(v4);
  \end{tikzpicture}}
\newcommand{\cofourcyclec}{
  \begin{tikzpicture}[baseline=-0.6ex,scale=0.25]
  \tikzstyle{vertex}=[circle,fill=black, minimum size=2pt,inner sep=1pt]
  \node[vertex] (v1) at (-0.5, 0.5){};
  \node[vertex] (v2) at (-0.5,-0.5){};
  \node[vertex] (v3) at (0.5, 0.5){};
  \node[vertex] (v4) at (0.5,-0.5){};
  \draw (v1)--(v4) (v3)--(v2);
  \end{tikzpicture}}
  \newcommand{\pfourb}{
  \begin{tikzpicture}[baseline=-0.6ex,scale=0.25]
  \tikzstyle{vertex}=[circle,fill=black, minimum size=2pt,inner sep=1pt]
  \node[vertex] (v1) at (-0.5, 0.5){};
  \node[vertex] (v2) at (-0.5,-0.5){};
  \node[vertex] (v3) at (0.5, 0.5){};
  \node[vertex] (v4) at (0.5,-0.5){};
  \draw (v2)--(v1)--(v3)--(v4);
  \end{tikzpicture}}
  \newcommand{\pfourc}{
  \begin{tikzpicture}[baseline=-0.6ex,scale=0.25]
  \tikzstyle{vertex}=[circle,fill=black, minimum size=2pt,inner sep=1pt]
  \node[vertex] (v1) at (-0.5, 0.5){};
  \node[vertex] (v2) at (-0.5,-0.5){};
  \node[vertex] (v3) at (0.5, 0.5){};
  \node[vertex] (v4) at (0.5,-0.5){};
  \draw (v4)--(v1)--(v3)--(v2);
  \end{tikzpicture}}
\newcommand{\pfourd}{
  \begin{tikzpicture}[baseline=-0.6ex,scale=0.25]
  \tikzstyle{vertex}=[circle,fill=black, minimum size=2pt,inner sep=1pt]
  \node[vertex] (v1) at (-0.5, 0.5){};
  \node[vertex] (v2) at (-0.5,-0.5){};
  \node[vertex] (v3) at (0.5, 0.5){};
  \node[vertex] (v4) at (0.5,-0.5){};
  \draw (v2)--(v1)--(v4)--(v3);
  \end{tikzpicture}}
\newcommand{\pfoure}{
  \begin{tikzpicture}[baseline=-0.6ex,scale=0.25]
  \tikzstyle{vertex}=[circle,fill=black, minimum size=2pt,inner sep=1pt]
  \node[vertex] (v1) at (-0.5, 0.5){};
  \node[vertex] (v2) at (-0.5,-0.5){};
  \node[vertex] (v3) at (0.5, 0.5){};
  \node[vertex] (v4) at (0.5,-0.5){};
  \draw (v2)--(v4)--(v1)--(v3);
  \end{tikzpicture}}
\newcommand{\pfourf}{
  \begin{tikzpicture}[baseline=-0.6ex,scale=0.25]
  \tikzstyle{vertex}=[circle,fill=black, minimum size=2pt,inner sep=1pt]
  \node[vertex] (v1) at (-0.5, 0.5){};
  \node[vertex] (v2) at (-0.5,-0.5){};
  \node[vertex] (v3) at (0.5, 0.5){};
  \node[vertex] (v4) at (0.5,-0.5){};
  \draw (v2)--(v3)--(v4)--(v1);
  \end{tikzpicture}}
  \newcommand{\pfourg}{
  \begin{tikzpicture}[baseline=-0.6ex,scale=0.25]
  \tikzstyle{vertex}=[circle,fill=black, minimum size=2pt,inner sep=1pt]
  \node[vertex] (v1) at (-0.5, 0.5){};
  \node[vertex] (v2) at (-0.5,-0.5){};
  \node[vertex] (v3) at (0.5, 0.5){};
  \node[vertex] (v4) at (0.5,-0.5){};
  \draw (v1)--(v3)--(v4)--(v2);
  \end{tikzpicture}}
\newcommand{\pfourLab}{
  \begin{tikzpicture}[baseline=-0.6ex,scale=0.25]
  \tikzstyle{vertex}=[circle,fill=black, minimum size=2pt,inner sep=1pt]
  \tikzstyle{Lvertex}=[circle, draw, minimum size=2pt,inner sep=1pt]
  \node[Lvertex] (v1) at (-0.5, 0.5){};
  \node[Lvertex] (v2) at (-0.5,-0.5){};
  \node[vertex] (v3) at (0.5, 0.5){};
  \node[vertex] (v4) at (0.5,-0.5){};
  \draw (v2)--(v1)--(v3)--(v4);
  \end{tikzpicture}}
\newcommand{\pfourLcdgray}{
  \begin{tikzpicture}[baseline=-0.6ex,scale=0.25]
  \tikzstyle{vertex}=[circle,fill=gray, minimum size=2pt,inner sep=1pt]
  \tikzstyle{Lvertex}=[circle, draw, minimum size=2pt,inner sep=1pt]
  \node[vertex] (v1) at (-0.5, 0.5){};
  \node[vertex] (v2) at (-0.5,-0.5){};
  \node[Lvertex] (v3) at (0.5, 0.5){};
  \node[Lvertex] (v4) at (0.5,-0.5){};
  \draw (v2)--(v1)--(v3)--(v4);
  \end{tikzpicture}}
\newcommand{\pfourLbc}{
  \begin{tikzpicture}[baseline=-0.6ex,scale=0.25]
  \tikzstyle{vertex}=[circle,fill=black, minimum size=2pt,inner sep=1pt]
  \tikzstyle{Lvertex}=[circle, draw, minimum size=2pt,inner sep=1pt]
  \node[Lvertex] (v1) at (-0.5, 0.5){};
  \node[vertex] (v2) at (-0.5,-0.5){};
  \node[Lvertex] (v3) at (0.5, 0.5){};
  \node[vertex] (v4) at (0.5,-0.5){};
  \draw (v2)--(v1)--(v3)--(v4);
  \end{tikzpicture}}
\newcommand{\pfourLac}{
  \begin{tikzpicture}[baseline=-0.6ex,scale=0.25]
  \tikzstyle{vertex}=[circle,fill=black, minimum size=2pt,inner sep=1pt]
  \tikzstyle{Lvertex}=[circle, draw, minimum size=2pt,inner sep=1pt]
  \node[vertex] (v1) at (-0.5, 0.5){};
  \node[Lvertex] (v2) at (-0.5,-0.5){};
  \node[Lvertex] (v3) at (0.5, 0.5){};
  \node[vertex] (v4) at (0.5,-0.5){};
  \draw (v2)--(v1)--(v3)--(v4);
  \end{tikzpicture}}
\newcommand{\pfourLbdgray}{
  \begin{tikzpicture}[baseline=-0.6ex,scale=0.25]
  \tikzstyle{vertex}=[circle,fill=gray, minimum size=2pt,inner sep=1pt]
  \tikzstyle{Lvertex}=[circle, draw, minimum size=2pt,inner sep=1pt]
  \node[Lvertex] (v1) at (-0.5, 0.5){};
  \node[vertex] (v2) at (-0.5,-0.5){};
  \node[vertex] (v3) at (0.5, 0.5){};
  \node[Lvertex] (v4) at (0.5,-0.5){};
  \draw (v2)--(v1)--(v3)--(v4);
  \end{tikzpicture}}
\newcommand{\pfourLad}{
  \begin{tikzpicture}[baseline=-0.6ex,scale=0.25]
  \tikzstyle{vertex}=[circle,fill=black, minimum size=2pt,inner sep=1pt]
  \tikzstyle{Lvertex}=[circle, draw, minimum size=2pt,inner sep=1pt]
  \node[vertex] (v1) at (-0.5, 0.5){};
  \node[Lvertex] (v2) at (-0.5,-0.5){};
  \node[vertex] (v3) at (0.5, 0.5){};
  \node[Lvertex] (v4) at (0.5,-0.5){};
  \draw (v2)--(v1)--(v3)--(v4);
  \end{tikzpicture}}
\newcommand{\fourstar}{
  \begin{tikzpicture}[baseline=-0.6ex,scale=0.25]
  \tikzstyle{vertex}=[circle,fill=black, minimum size=2pt,inner sep=1pt]
  \node[vertex] (v1) at (-0.5, 0.5){};
  \node[vertex] (v2) at (-0.5,-0.5){};
  \node[vertex] (v3) at (0.5, 0.5){};
  \node[vertex] (v4) at (0.5,-0.5){};
  \draw (v1)--(v2)--(v3) (v2)--(v4);
  \end{tikzpicture}}
\newcommand{\cofourstar}{
  \begin{tikzpicture}[baseline=-0.6ex,scale=0.25]
  \tikzstyle{vertex}=[circle,fill=black, minimum size=2pt,inner sep=1pt]
  \node[vertex] (v1) at (-0.5, 0.5){};
  \node[vertex] (v2) at (-0.5,-0.5){};
  \node[vertex] (v3) at (0.5, 0.5){};
  \node[vertex] (v4) at (0.5,-0.5){};
  \draw (v1)--(v3)--(v4)--(v1);
  \end{tikzpicture}}
\newcommand{\cofourstarb}{
  \begin{tikzpicture}[baseline=-0.6ex,scale=0.25]
  \tikzstyle{vertex}=[circle,fill=black, minimum size=2pt,inner sep=1pt]
  \node[vertex] (v1) at (-0.5, 0.5){};
  \node[vertex] (v2) at (-0.5,-0.5){};
  \node[vertex] (v3) at (0.5, 0.5){};
  \node[vertex] (v4) at (0.5,-0.5){};
  \draw (v3)--(v1)--(v2)--(v3);
  \end{tikzpicture}}
\newcommand{\cocherry}{
  \begin{tikzpicture}[baseline=-0.3ex,scale=0.25]
  \tikzstyle{vertex}=[circle,fill=black, minimum size=2pt,inner sep=1pt]
  \node[vertex] (v1) at (-0.5, 0){};
  \node[vertex] (v2) at (0.5,0){};
  \node[vertex] (v3) at (0,0.8){};
  \draw (v1)--(v2);
  \end{tikzpicture}}
\newcommand{\edge}{
  \begin{tikzpicture}[baseline=-0.6ex,scale=0.25]
  \tikzstyle{vertex}=[circle,fill=black, minimum size=2pt,inner sep=1pt]
  \node[vertex] (v1) at (0, -0.5){};
  \node[vertex] (v2) at (0,0.5){};
  \draw (v1)--(v2);
  \end{tikzpicture}}
\title{On graphs with modularity zero or near-zero}
\date{\today \vspace{-5mm}
}
\author[1]{Colin McDiarmid \thanks{Email: \textit{cmcd@ox.ac.uk}}}
\author[2]{Fiona Skerman\thanks{Email: \textit{fiona.skerman@math.uu.se}}}
\affil[1]{Department of Statistics, University of Oxford, United Kingdom}
\affil[2]{Department of Mathematics, Uppsala University, Sweden}
\begin{document}
\maketitle

\begin{abstract}
It is known that complete graphs and complete 
multipartite graphs have modularity zero. We show that the least number of edges we may delete from the complete graph $K_n$ to obtain a graph with non-zero modularity is $\lfloor n/2\rfloor +1$. Similarly we determine the least number of edges we may delete from or add to a complete bipartite graph to reach non-zero modularity.  We give some corresponding results for complete multipartite graphs, and a short proof that complete multipartite graphs have modularity zero.

We also analyse the modularity of very dense random graphs, and in particular we find that there is a transition to modularity zero when the average degree of the complementary graph drops below 1.

Finally we consider some natural variants of the definition of modularity; and investigate which graphs have corresponding modularity value 0, and the least number of edges we may delete from the complete graph $K_n$ to obtain a graph with non-zero modularity.
\end{abstract}

\section{Introduction} \label{sec.intro}
The modularity $\q(G)$ of a graph $G$ was introduced by Newman and Girvan in 2004~\cite{NewmanGirvan}, to give a measure of how well $G$ can be divided into `communities', and now modularity is at  the heart of the most popular algorithms used to cluster real data~\cite{popular}, including the  Louvain~\cite{louvain} and Leiden~\cite{traag2019louvain} algorithms. See Section~\ref{subsec.defns} below for precise definitions and further discussion.

We are interested here in the question of which graphs have modularity zero or near zero.
We saw in~\cite{ERmod} that for suitable random graphs~$R_n$, if the average degree of~$R_n$ tends to $\infty$ as $n \to \infty$ then $\q(R_n) \to 0$ in probability -- details are given below. But when is $\q(R_n)>0$ whp and when is it actually 0 whp? (For a sequence of events $A_n$ we say that $A_n$ holds \emph{with high probability (whp)} if $\pr(A_n) \to 1$ as $n \to \infty$.)
A key step in understanding this is deterministic.

It is known~\cite{nphard} and easy to see that complete graphs have modularity zero. 
Graphs which are close to a complete graph in that there are very few missing edges also have modularity zero.  We give the precise threshold in Theorem~\ref{thm.detmod0} below.

Each graph $G$ with at most 3 vertices has modularity $\q(G)=0$.
(By convention a graph $G$ with no edges has $\q(G)=0$.) 
The 4-vertex path has modularity $\frac16$, and the 4-vertex graph consisting of two disjoint edges has modularity $\frac12$; and indeed for each $n \geq 4$ there is an $n$-vertex graph $G$ with $\q(G)>0$. Given an integer~$n \geq 4$, let $f^{\rm q}(n)$ 
be the least number of edges missing from any $n$-vertex graph $G$ with $\q(G)>0$. 

\smallskip

\begin{thm} \label{thm.detmod0} 
For each $n \geq 4$ we have $f^q(n)=\lfloor n/2 \rfloor +1$.
\end{thm}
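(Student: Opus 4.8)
The plan is to reduce the question to two‑part partitions and then to settle those with one explicit quadratic identity. Throughout write $\bar G$ for the complement of $G$, set $r = e(\bar G)$, and for a partition $\AA$ of $V(G)$ with $m = e(G)>0$ recall the modularity score $q_\AA(G) = \frac{1}{m}\sum_{A\in\AA}e(A) - \frac{1}{4m^2}\sum_{A\in\AA}\vol(A)^2$, so $\q(G) = \max_\AA q_\AA(G) \ge 0$ (taking $\AA$ trivial). First I would prove that $\q(G)>0$ if and only if some partition of $V(G)$ into two nonempty parts has positive modularity score. The non‑trivial direction: given $\AA = \{A_1,\dots,A_k\}$, colour the $A_i$ red or blue independently and uniformly and let $\cB = \{R,B\}$ be the resulting bipartition. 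A short first/second‑moment calculation gives $\E[q_\cB(G)] = \tfrac12 q_\AA(G)$ --- the within‑part edges of $\cB$ are all within‑part edges of $\AA$ together with, in expectation, half of the crossing edges, and $\E[\vol(R)^2+\vol(B)^2] = \tfrac12\sum_i\vol(A_i)^2 + 2m^2$. Hence $q_\AA(G)>0$ forces some genuine bipartition to have positive score, and from now on only bipartitions need be considered.

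Next I would record the bipartition identity. Fix a bipartition with parts of sizes $a$ and $b = n-a$, and write $x = e_{\bar G}(A)$, $y = e_{\bar G}(B)$, $z = e_{\bar G}(A,B)$, so that $x+y+z = r$ and $2m = n(n-1) - 2r$. Since $e_G(A,B) = ab - z$, $\vol(A) = a(n-1) - (2x+z)$ and $\vol(B) = b(n-1) - (2y+z)$, a routine expansion gives $q_{(A,B)}(G) = -\Psi/(2m^2)$, where
\begin{align*}
\Psi &= ab(n-1) - 2r\,ab + 2(n-1)(ay+bx) - 4xy + z^2 \\
&= 2bx(b-1) + 2ay(a-1) + ab(n-1) - 2ab\,z + z^2 - 4xy.
\end{align*}
Thus a bipartition has nonpositive modularity score exactly when $\Psi\ge0$, and positive score exactly when $\Psi<0$.

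For the lower bound $f(n)\ge\lfloor n/2\rfloor+1$, suppose $r\le\lfloor n/2\rfloor$; I would show $\Psi\ge0$ for every bipartition, so that $\q(G)=0$ by the two steps above. If $a=1$ (the case $a = n-1$ being symmetric) then $x=0$ and the identity collapses to $\Psi = \bigl((n-1)-z\bigr)^2\ge0$. If $2\le a\le n-2$ then $ab\ge 2(n-2) > n/2 \ge z$, so $z\mapsto z^2 - 2ab\,z$ is decreasing on $[0,n/2]$ and hence at least its value $\tfrac{n^2}{4} - n\,ab$ at $z = n/2$; moreover $2x\le a(a-1)$ gives $4xy\le 2a(a-1)y$, so $2ay(a-1)-4xy\ge0$. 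Substituting into the second form of $\Psi$ gives $\Psi \ge \bigl(\tfrac{n^2}{4}-ab\bigr) + 2bx(b-1)\ge0$, using $ab\le n^2/4$. The hypothesis $r\le\lfloor n/2\rfloor$ is used precisely through $z\le n/2$.

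For the upper bound $f(n)\le\lfloor n/2\rfloor+1$, take $A,B$ with $|A| = \lfloor n/2\rfloor$ and $|B| = \lceil n/2\rceil$, and let $\bar G$ consist of exactly $r := \lfloor n/2\rfloor+1$ edges, all between $A$ and $B$ (possible since $r\le ab$ for $n\ge4$). Then $x=y=0$ and $z=r$, so $\Psi = ab(n-1-2r)+r^2$, which equals $-2t^2+2t+1$ when $n=2t$ and $1-t^2$ when $n=2t+1$; both are negative for $t\ge2$, i.e.\ for every $n\ge4$. Hence $q_{(A,B)}(G)>0$, so $\q(G)>0$, while $\bar G$ has $\lfloor n/2\rfloor+1$ edges, which completes the argument. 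I expect the main obstacle to be the lower bound: pushing $\Psi\ge0$ past the negative terms $-2r\,ab$ (largest when $r=\lfloor n/2\rfloor$) and $-4xy$, which is exactly where $ab\le n^2/4$, the bound $x\le\binom{a}{2}$, and the value $\lfloor n/2\rfloor$ each get used. The reduction to bipartitions in the first step is short but genuinely needed, and everything else is bookkeeping once the identity for $\Psi$ is in hand.
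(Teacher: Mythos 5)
Your proof is correct; I checked the identity for $\Psi$ (which is just $-p_G(A)$ in the paper's notation, where $p_G(U)=\vol(U)\vol(\bar U)-e(U,\bar U)\vol(G)$), the expectation computation $\E[q_\cB(G)]=\tfrac12 q_{\AA}(G)$ for the random two-colouring, and the case analysis, and all of it goes through. The upper bound is essentially the paper's: the same construction (delete $\lfloor n/2\rfloor+1$ edges, all between two near-equal classes), just evaluated through $\Psi$ rather than through the edge contribution and degree tax separately. The two genuine differences are in the other steps. For the reduction to bipartitions, the paper uses the identity $q_{\AA}(G)=\vol(G)^{-2}\sum_{A\in\AA}p_G(A)$ together with $p_G(U)=p_G(\bar U)$, which gives the reduction in one line; your probabilistic merging argument proves the same fact and is a nice self-contained alternative, but it is doing more work than necessary here. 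For the lower bound, the paper first applies a local edge-moving inequality (moving a crossing edge of $G$ to a missing slot inside a part strictly increases $p_G(U)$) to conclude that the maximiser of $p_G(U)$ over all graphs with at most $n/2$ missing edges has every missing edge crossing the bipartition; this collapses the problem to the one-parameter computation $p_H(U)=j(n-j)(2x-(n-1))-x^2$. You instead keep all three parameters $x,y,z$ and beat the negative terms $-2abz$ and $-4xy$ directly with $z\le n/2<ab$, $2x\le a(a-1)$ and $ab\le n^2/4$. Your route avoids the extremality argument at the cost of a slightly heavier inequality chase; the paper's route isolates a reusable monotonicity fact (its inequality (2.1)) that it also invokes later, and makes the extremal configuration transparent. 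Both are sound, and your version has the minor merit of exhibiting explicitly where each hypothesis ($r\le\lfloor n/2\rfloor$, the balancedness of the parts, the bound $x\le\binom a2$) enters.
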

This theorem corrects Lemma~9 and the corresponding part of Theorem~2 in~\cite{trajanovski2012maximum}, which in the notation here says that $f^q(n) \geq n-2$ : see Section~\ref{subsec.prev_work} below for details. We also give results on how slight changes to the definition of modularity affect the result in Theorem~\ref{thm.detmod0} -- see the table in Figure~\ref{fig.mod_variants} on page~\pageref{fig.mod_variants} for a summary.

The following result gives a characterisation of the set of graphs which are extremal in the sense of Theorem~\ref{thm.detmod0}. 
We first consider the case $n=4$, before covering the general case $n \geq 5$ in the proposition. By Theorem~\ref{thm.detmod0} we have $f^q(4)=3$.  It is easy to see that a set $F$ of 3 edges in $K_4$ satisfies $\q(K_4 -F)>0$ if and only if $F$ forms a path. Note that if $F$ is a star
(which is a bipartite graph),
then $K_4-F = \kfour-\fourstar = \cofourstar$, i.e. $K_3$ plus an isolated vertex, which has modularity zero.
\begin{prop} \label{prop.detmod0b}
Let $n \geq 5$. For graphs $G$ on $[n]$ with $e(\overline{G})=f^q(n)$, 
$\q(G)>0$ if and only if $\overline{G}$ is bipartite. 
\end{prop}
A result related to this proposition, see Remark~\ref{rem.was_part_a}, will be useful to us in Example~\ref{ex.K2222}. 

Given a graph $H$, let $\delta^-(H)$ be
the least number of edges we can delete to obtain a subgraph $H'$ with $\q(H')>0$. If there is no subgraph $H'$ with $\q(H')>0$ we set $\delta^-(H)=\infty$.\label{defn.deltaminus}
(The only graph with $m \geq 4$ edges with this property is the $m$-edge star $K_{1,m}$. This can be seen by noting that $\delta^-(H)$ is finite as soon as $H$ contains two disjoint edges.)
In this notation, Theorem~\ref{thm.detmod0} says that $\delta^-(K_n) = \lfloor n/2 \rfloor +1$ for $n \geq 4$, so large complete graphs have modularity 0 `robustly'.

\bigskip

To recap, any complete graph has modularity zero, and this is robust in the sense that graphs `near' complete graphs also have modularity zero. In contrast, while any complete multipartite graph~$H$ has modularity zero~\cite{majstorovic2014note,bolla2015spectral} (see Section~\ref{sec.multi0} below for a short proof), here there is no robustness in general.
We focus on complete bipartite graphs, where we can tell the complete story  about robustness reasonably quickly: we find that, except for a few small examples, it is enough to add or remove a single edge to obtain a graph with positive modularity.  In Section~\ref{subsec.multi} we give some corresponding partial robustness results for complete multipartite graphs, and in Section~\ref{sec.multi0} we use a recent result on modularity and graph expansion~\cite{modexp} to give a short and simple proof that complete multipartite graphs have modularity~0.

To state the results, let us define natural counterparts to $\delta^-$: given a graph $H$, let $\delta^+(H)$ be the least number of edges we can \emph{add} (between already existing vertices) to obtain a graph $H'$ with $\q(H')>0$ (where $\delta^+(H)=\infty$ if there is no such graph $H'$). Likewise let $\delta(H)$ be the minimum number of edits, either adding or removing edges, to obtain a graph $H'$ with positive modularity (where~$\delta(H)=\infty$ if there is no such graph $H'$, that is if $v(H) \leq 3$).

\smallskip

\begin{thm} \label{thm.bip_delta_all}
Let $G=K_{s,t}$ be a complete bipartite graph with $s \leq t$.
\begin{description}
\item{(a)}
 If $s=1$ then $\delta^-(G)=\infty$, and if $s \geq 2$ then $\delta^-(G)=1$.
\item{(b)}
 If $s=1$ and $t \geq 4$, or if $s \geq 2$ and $t \geq 3$, then $\delta^+(G)=1$.  In the other cases (namely when $(s,t) = (1,1), (1,2), (1,3)$ or $(2,2)$), we have $\delta^+(G)=\infty$.
\item{(c)}
 If $s=1$ and $t \geq 4$, or if $s \geq 2$, then $\delta(G)=1$. 
 If $(s,t) = (1,1)$ or $ (1,2)$ then $\delta(G)=\infty$, and in the remaining case $(s,t)=(1,3)$ we have $\delta(G)=2$.
\end{description}
\end{thm}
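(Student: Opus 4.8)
The plan is to prove parts (a), (b), (c) in that order, deducing (c) almost entirely from (a), (b) together with two explicit small computations. Throughout I use the elementary identity that for a bipartition $\{A,B\}$ of $V(G)$, writing $m=e(G)$ and letting $e(A,B)$ be the number of edges across the cut,
\[ q_{\{A,B\}}(G)=\tfrac12-\frac{e(A,B)}{m}-\frac{(\vol A-\vol B)^2}{8m^2}; \]
so to certify $\q(G)>0$ it suffices to produce a bipartition whose cut is a little below $m/2$ and whose two sides are not too unbalanced in volume.

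For part (a), the assertion $\delta^-(K_{1,t})=\infty$ is exactly the remark already recorded in the text. For $s\ge2$ the group $\mathrm{Aut}(K_{s,t})$ is edge-transitive, so it is enough to show that deleting one fixed edge $a_1b_1$ yields positive modularity (the bound $\delta^-\ge1$ is trivial since $\q(K_{s,t})=0$). Here $m=st-1$, and I would take $A=\{a_1,\dots,a_p\}\cup\{b_2,\dots,b_{q+1}\}$, $B=V\setminus A$, with $p$ near $s/2$ and $q$ near $t/2$; then $a_1b_1$ is a cut edge, each side carries roughly half the edges, and a direct computation converts $q_{\{A,B\}}>0$ into the inequality $(\alpha t+\beta s)^2<m(1+4\alpha\beta)$, where $\alpha=p-s/2$ and $\beta=q-t/2$. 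One then picks $\alpha$ to be the smallest admissible shift ($0$ if $s$ is even, $\tfrac12$ if $s$ is odd) and $\beta$ the admissible value making $\alpha t+\beta s$ as small as possible while keeping $1+4\alpha\beta>0$; checking the resulting inequality is a short case analysis on the parities of $s$ and $t$, using $2\le s\le t$.

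For part (b), the exceptional cases come from Theorem~\ref{thm.detmod0}: since $f(4)=3$, any graph on at most $4$ vertices with more than $3$ edges has modularity $0$, and every graph on at most $3$ vertices has modularity $0$; adding edges to $K_{1,1},K_{1,2},K_{1,3}$ or $K_{2,2}$ can only produce such graphs (or $K_4$), so $\delta^+=\infty$ there. When $s=1$ and $t\ge4$, add an edge $b_1b_2$ between two leaves and take the bipartition $\{b_1,b_2\}$ versus the rest, giving $q_{\{A,B\}}=\frac{2(t-3)}{(t+1)^2}>0$. When $s\ge2$ and $t\ge3$, add an edge $b_1b_2$ inside the larger side; if $t\ge s+3$ take $\{b_1,b_2\}$ versus the rest, which gives $q_{\{A,B\}}=\frac{2s(t-s-2)}{(st+1)^2}>0$ (this quantity is $\le0$ once $t\le s+2$, which is why a different partition is needed in that range). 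If instead $s\le t\le s+2$, take a roughly balanced bipartition with $b_1$ and $b_2$ on the same side, so the new edge is internal; then the positivity condition reduces to $(\alpha t+\beta s+1)^2<(st+1)(1+4\alpha\beta)$ with $\alpha,\beta$ as before, and since $t$ lies within $2$ of $s$ one of the choices $(\alpha,\beta)\in\{0,\pm\tfrac12\}^2$ works, finished by a short parity check.

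Part (c) is then largely formal: always $\delta(G)\le\min\{\delta^-(G),\delta^+(G)\}$, so part (a) gives $\delta(K_{s,t})\le1$ for $s\ge2$ and part (b) gives $\delta(K_{1,t})\le1$ for $t\ge4$, while $\delta\ge1$ because $\q(K_{s,t})=0$; for $(1,1)$ and $(1,2)$ the graph has at most $3$ vertices, so $\delta=\infty$. The one genuinely special value is $\delta(K_{1,3})=2$: no single edit helps (adding an edge yields a $4$-vertex graph with $4$ edges, hence modularity $0$ by $f(4)=3$; deleting a star-edge yields a path on $3$ vertices plus an isolated vertex), whereas deleting one star-edge and adding one edge between two leaves turns $K_{1,3}$ into the $4$-vertex path $P_4$, with modularity $\tfrac16>0$. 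I expect the main obstacle to be in part (b): recognising that ``isolate the two endpoints of the new edge'' is the right move exactly when $t\ge s+3$, and then making the balanced bipartition with the new edge internal work uniformly over all parities of $s$ and $t$ in the narrow band $s\le t\le s+2$, together with separating out the few small exceptions, is where essentially all the bookkeeping lives; part (a) is similar in spirit but cleaner, since only one family of bipartitions is needed.
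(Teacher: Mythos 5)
Your reductions are algebraically correct (I checked that for the deleted edge the positivity condition is indeed $(\alpha t+\beta s)^2<m(1+4\alpha\beta)$, and similarly for the added edge), and parts (b) and (c) go through: the exceptional cases via $f(4)=3$, the partition $\{b_1,b_2\}$ giving $2s(t-s-2)/(st+1)^2$ for $t\ge s+3$, the balanced partition in the band $s\le t\le s+2$, and the $K_{1,3}$ computation all match or correctly replace what the paper does. The problem is part (a). Your recipe --- take $\alpha,\beta$ as close to $0$ as parity allows, i.e.\ a bipartition with $|A\cap S|\approx s/2$ and $|A\cap T|\approx t/2$ --- fails once $t$ is large relative to $s$. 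Concretely, for $K_{3,12}$ minus an edge ($m=35$) the forced choice is $\alpha=\pm\tfrac12$, and then $1+4\alpha\beta>0$ forces $\alpha\beta\ge 0$, so $|\alpha t+\beta s|\ge t/2=6$ and the condition $36<35(1+4\alpha\beta)$ fails at the minimizing $\beta=0$; worse, for $K_{3,21}$ minus an edge \emph{every} choice with $p\in\{1,2\}$, $q\in\{10,11\}$ gives $(\alpha t+\beta s)^2=144>124=m(1+4\alpha\beta)$ or a nonpositive right-hand side. So the ``short parity check'' cannot close this case, and the guiding heuristic (``sides not too unbalanced in volume'') is the wrong one.

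What is actually needed is a bipartition in which $A$ contains (nearly) the same \emph{proportion} of $S$ as of $T$: writing $\sigma=|A\cap S|/s$ and $\tau=|A\cap T|/t$, one has $q_{\cA}(K_{s,t})=-\tfrac12(\sigma-\tau)^2$, so the score is controlled by $\sigma-\tau=\alpha/s-\beta/t$, not by the volume imbalance $\alpha t+\beta s$ that your recipe minimizes (these coincide only when $s\approx t$, which is why your part (b) band argument survives). For $K_{3,12}$ the right set $A$ takes one third of each side ($p=1$, $q=4$), which is far from balanced. Achieving $\sigma=\tau$ exactly is possible iff $\gcd(s,t)>1$; otherwise the best one can do is $|\sigma-\tau|=1/(st)$ via an integer solution of $at-bs=1$, and one must then verify that this deficit is beaten by the gain from deleting a cross edge. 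This gcd/coprime dichotomy is exactly the structure of the paper's proof of part (a), and it (or an equivalent substitute, e.g.\ taking $q$ near $pt/s$ rather than near $t/2$ and redoing your inequality with that choice) is the missing ingredient. Parts (b) and (c) I would accept as written, modulo writing out the parity checks.
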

Theorems~\ref{thm.detmod0} and~\ref{thm.bip_delta_all}
suggest that complete graphs $K_n$ may be the `furthest' from graphs with positive modularity.

\begin{question}\label{q.far}
Is it true that, for each graph $H$ with $n \geq 4$ vertices, we have $\delta(H) \leq \delta(K_n)$?
\end{question}

\medskip
 
Now consider random graphs. The modularity of random graphs $\Gnp$ and $G_{n,m}$ (see Section~\ref{subsec.defns} below for definitions) is quite fully analysed in~\cite{ERmod}, except for the very dense case.
By Theorem~1.3 of~\cite{ERmod}, when $p=p(n)$ satisfies $p \geq 1/n$ and $p$ is bounded away from 1, we have $\q(\Gnp)= \Theta((np)^{-1/2})$ whp; and by Theorem~4.1 of the same paper, when $1/n \leq p \leq 1- c_0/n$ for a suitable (large) constant $c_0$ we have $\q(\Gnp) = \Omega\left(\sqrt{\frac{1-p}{np}}\right)$ whp.
We focus here on the very dense case, when $p=1- \Theta(1/n)$.  Note that in this case 
$\sqrt{\frac{1-p}{np}}= \Theta(1/n)$. It follows easily from Theorem~\ref{thm.detmod0} that when $1-p$ is very small, say $1-p\leq (1-\eps)/n$ (for some $\eps>0$), then whp $\q(G_{n,p})=0$.  We shall see that when $1-p=1/n$, two different behaviours can occur each with probability about $\frac12$, namely $\q(G_{n,p})=0$ and $\q(G_{n,p})= \Theta(n^{-3/2})$; and if~$1-p$ is just a little larger, say $1-p \geq (1+\eps)/n$ (for some $\eps>0$), then whp ~$\q(G_{n,p}) = \Theta(1/n)$. 

\smallskip

\needspace{3\baselineskip}
\begin{thm}\label{thm.Gnp} 
\begin{description}
\item{(a)}
If $p=p(n)$ satisfies $p \geq 1- 1/n + \omega(n)/n^{3/2}$ (where $\omega(n) \to \infty$ as $n \to \infty$) then $\q(G_{n,p})=0$ whp.
\item{(b)}
  If  $p=1-1/n$, then (i) $\pr(\q(G_{n,p})=0) = \frac12 +o(1)$,
  and (ii) given $\eps>0$ there exist $0<\alpha<\beta$ such that $\pr(\alpha n^{-3/2} < \q(G_{n,p}) < \beta n^{-3/2}) \geq \frac12 - \eps$.
\item{(c)}
Given $1<c_1<c_2$, there exist $0< \alpha < \beta$ such that, if $p$ satisfies 
$1- c_2/n \leq p \leq 1-c_1/n$ then $\alpha/n \leq \q(G_{n,p}) \leq \beta/n$ whp.
\end{description}
\end{thm}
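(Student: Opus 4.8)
All parts go through the complement $H:=\overline{\Gnp}\sim G_{n,q}$ with $q=1-p$; write $\mu:=e(H)\sim\Bin(\binom n2,q)$ for the number of edges missing from $\Gnp$, and note that for a partition $\cA=\{A_1,\dots,A_k\}$ with $a_i:=|A_i|$ one has $e_{\Gnp}(A_i)=\binom{a_i}2-e_H(A_i)$, $\vol_{\Gnp}(A_i)=a_i(n-1)-\vol_H(A_i)$ and $e(\Gnp)=\binom n2-\mu$. Part~(a) is immediate: here $q\le 1/n-\omega(n)/n^{3/2}$ gives $\E\mu=\binom n2 q\le\tfrac{n-1}2-\Theta(\sqrt n\,\omega(n))$ while $\var\mu\le\E\mu\le n$, so $\mu\le\lfloor n/2\rfloor$ whp by Chebyshev; since $f(n)=\lfloor n/2\rfloor+1$ by Theorem~\ref{thm.detmod0}, any $n$-vertex graph with at most $\lfloor n/2\rfloor$ missing edges has modularity $0$, hence $\q(\Gnp)=0$ whp.

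For the upper bound in~(b) I would compare $\Gnp=K_n\setminus H$ with $K_n$ for a fixed $\cA$. With $x_i:=a_i/n$ (so $\sum x_i=1$ and $q^D_\cA(K_n)=\sum x_i^2$) and $y_i:=\vol_{\Gnp}(A_i)/2e(\Gnp)$, one finds $y_i-x_i=(2\mu a_i-n\vol_H(A_i))/(n[n(n-1)-2\mu])$; since $\sum a_i=n$, $\sum\vol_H(A_i)=2\mu$ and $n(n-1)-2\mu\ge n^2/2$ for $n$ large, $\sum_i|y_i-x_i|=O(\mu/n^2)$, and therefore (as $x_i,y_i\in[0,1]$) $q^D_\cA(\Gnp)=\sum y_i^2\ge q^D_\cA(K_n)-O(\mu/n^2)$. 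Also $q^E_\cA(\Gnp)\le q^E_\cA(K_n)\cdot e(K_n)/e(\Gnp)\le q^E_\cA(K_n)+O(\mu/n^2)$. Hence $q_\cA(\Gnp)\le q_\cA(K_n)+O(\mu/n^2)\le O(\mu/n^2)$ uniformly in $\cA$, because $\q(K_n)=0$. As $\mu\le c_2 n$ whp, maximizing over $\cA$ gives $\q(\Gnp)\le\beta/n$ whp for a suitable $\beta=\beta(c_2)$.

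For the lower bound put $\bar c:=nq\in[c_1,c_2]$, so $H\sim G_{n,\bar c/n}$ is supercritical. I would use a single near-balanced bipartition $(A,B)$ of $V$. Writing $\mu_{AB}:=e_H(A,B)$ and using $\vol_H(A)+\vol_H(B)=2\mu$ and $e_H(A)+e_H(B)=\mu-\mu_{AB}$, a short computation collapses the score to
\[
 q_\cA(\Gnp)=\frac{\mu_{AB}-\tfrac14 n-\tfrac12\mu}{e(\Gnp)}+o(n^{-1}),
\]
the error absorbing the $O(n^{-2})$ term $(e_H(A)-e_H(B))^2/(2e(\Gnp)^2)$ and the effect of an imbalance $\bigl||A|-|B|\bigr|=o(n)$. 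Since $e(\Gnp)\le n^2/2$ and $\mu=\tfrac12\bar c n+o(n)$ whp, it suffices to find, whp, a bipartition with $\bigl||A|-|B|\bigr|=o(n)$ and $\mu_{AB}\ge(\tfrac14(\bar c+1)+\gamma)n$ for some constant $\gamma=\gamma(c_1,c_2)>0$; then $q_\cA(\Gnp)\ge\gamma/(2n)$ for $n$ large, so $\q(\Gnp)\ge\alpha/n$ with $\alpha=\gamma/2$.

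Thus everything reduces to a statement about sparse random graphs, which I expect to be the crux: whp $G_{n,\bar c/n}$, $\bar c\in[c_1,c_2]$ with $c_1>1$, has a bipartition, balanced up to $o(n)$, cutting more than $\tfrac14(\bar c+1)n$ of its $\sim\tfrac12\bar c n$ edges. For $\bar c$ near $1$ this is elementary: $H$ is then nearly a forest, of cycle rank only $\Theta((\bar c-1)^3)\cdot n$, so two-colouring a spanning forest cuts $n-c=\mu-(\text{cycle rank})\ge\tfrac12\bar c n-\Theta((\bar c-1)^3)\cdot n$ edges ($c$ the number of components), which exceeds $\tfrac14(\bar c+1)n$ by $\approx\tfrac14(\bar c-1)n$; balance is achieved at no cost to the cut by exploiting the linear number of small tree components of $H$ whose two colour classes differ in size, together with the freedom to flip the colouring of each component. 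For larger $\bar c$ a spanning forest is too wasteful and one must instead use (known) lower bounds for the maximum cut of $G_{n,\bar c/n}$, which exceeds $\tfrac14\bar c n$ by $\Theta(\sqrt{\bar c}\,)\cdot n\gg\tfrac14 n$ once $\bar c>1$ is bounded away from $1$, noting that a near-optimal cut can be taken near-balanced; continuity and compactness of $[c_1,c_2]$ then yield a single $\gamma>0$. Making this max-cut estimate precise --- uniformly and with the required slack over $\tfrac14(\bar c+1)n$, and with a careful treatment of the balance constraint --- is the main technical hurdle.
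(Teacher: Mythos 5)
Parts (a) and the upper bound in (b) are fine. Your (a) is the paper's argument (Chebyshev on the number of missing edges plus Theorem~\ref{thm.detmod0}), and your partitionwise perturbation from $K_n$ for the upper bound is a hands-on version of what the paper gets from its robustness lemma (Lemma~\ref{lem.edgeSim}, packaged as Lemma~\ref{lem.qleq}). Your reduction of the lower bound to finding, whp, a near-balanced bipartition of $H\sim G_{n,\bar c/n}$ cutting at least $\tfrac14(\bar c+1)n+\gamma n$ edges is also correct, and for $\bar c$ close to $1$ your forest argument is essentially the paper's (Lemmas~\ref{lem.smallcomps}, \ref{lem.excess}, \ref{lem.FK} and \ref{lem.bipdelta}, which work with the $2$-core rather than a spanning forest and thereby reach $\bar c\le 2$, with the explicit margin $\delta=\tfrac1{2c}((c-1)(2-c)+2(x-1)^2)$).

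The gap is the regime of larger $\bar c$, which you flag as the main hurdle but do not close; since $c_2$ is arbitrary, it cannot be skipped. The core/forest bound genuinely fails there: at $\bar c=3$ one has $x(3)\approx 0.179$ and the guaranteed cut is $(1-\tfrac{x(2-x)}{2\bar c})n\approx 0.946\,n$, below the required $\tfrac14(\bar c+1)n=n$. Your fallback --- that the max cut of $G_{n,\bar c/n}$ exceeds $\tfrac14\bar c n$ by $\Theta(\sqrt{\bar c})\,n\gg\tfrac14 n$ once $\bar c$ is bounded away from $1$ --- does not hold up as stated: the $\sqrt{\bar c}$ asymptotics (with the Parisi-type constant) are only as $\bar c\to\infty$, and for fixed moderate $\bar c$ the implied constant is exactly what decides whether the excess beats $\tfrac14 n$; you would also still have to enforce near-balance. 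The paper sidesteps all of this with a thinning step (Lemma~\ref{lem.thin}): it proves the lower bound for a single $c\in(1,2]$ with $c\le c_1$, fixes the good balanced bipartition $\cA$ of $G_{n,1-c/n}$, and notes that $G_{n,p}$ with $1-c_2/n\le p\le 1-c/n$ is $(G_{n,1-c/n})_\rho$ for $\rho\to 1$, under which $q^E_\cA$ and $q^D_\cA$ each change by only $o(1/n)$ by Chebyshev. Adopting that reduction would let you discard the large-$\bar c$ max-cut input altogether; as written, your proof is incomplete for $c_1$ (or $c_2$) above roughly $2$.
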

The lower bound in part\,(c) of Theorem~\ref{thm.Gnp} allows us to extend the range of $p$ in Theorem~4.1 of~\cite{ERmod} up to nearly $1-1/n$ (and part (a) shows that we cannot go much further). Using Theorem~4.1 of~\cite{ERmod} for the lower parts of the range we obtain the following result.

\smallskip

\begin{figure}[t]
    \centering
    \includegraphics[scale=0.78]{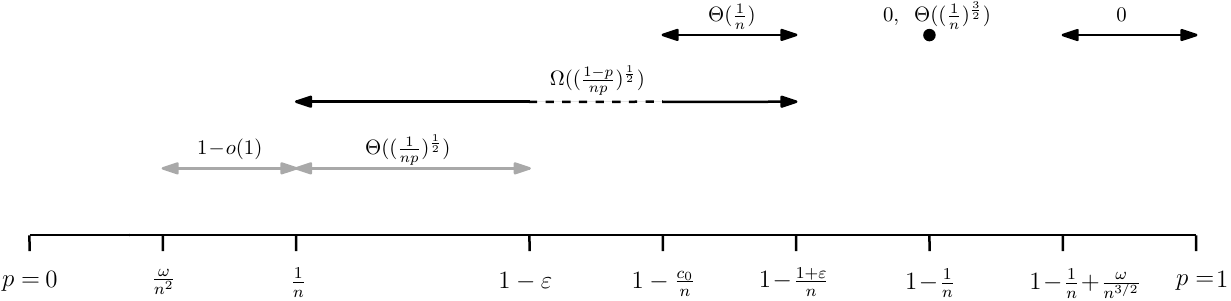}
    \caption{
An illustration of the bounds known to hold whp on $\q(G_{n,p})$ for different ranges of $p=p(n)$. 
On the top level, we depict the results of Theorem~\ref{thm.Gnp}; the left interval corresponds to part (a), the middle to part (b) and the right to part~(c). For $p=1-1/n$, the modularity of $G_{n,p}$ has probability about a half of being zero, and about a half of being of order $n^{-3/2}$, see part (b) of Theorem~\ref{thm.Gnp}.  
On the middle level, we depict the result of Theorem~\ref{thm.from-modER}, which extends earlier results from~\cite{ERmod} shown in grey on the bottom level.}\label{fig.Gnp}
\end{figure}

\begin{thm} \label{thm.from-modER}
Given $\eps>0$ there exists $\alpha>0$ such that, if $p=p(n)$ satisfies $1/n \leq p \leq 1-(1+\eps)/n$, then $\q(G_{n,p}) \geq \alpha \sqrt{\frac{1-p}{np}}$ whp.
\end{thm}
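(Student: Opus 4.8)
The plan is simply to patch together two lower bounds that are already on hand. By Theorem~4.1 of~\cite{ERmod} there is a constant $\alpha_1>0$ and a (large) constant $c_0$ with $\q(\Gnp)\ge \alpha_1\sqrt{(1-p)/(np)}$ whp whenever $1/n\le p\le 1-c_0/n$; increasing $c_0$ if necessary (which only shrinks the range covered by Theorem~4.1) we may assume $c_0>1+\eps$. Thus it remains to handle the complementary window $1-c_0/n\le p\le 1-(1+\eps)/n$.

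On that window I would invoke Theorem~\ref{thm.Gnp}(b) with $c_1=1+\eps$ and $c_2=c_0$ (permissible since $1<c_1<c_2$), obtaining constants $0<\alpha'<\beta'$ with $\alpha'/n\le \q(\Gnp)\le\beta'/n$ whp. Here $1-p=\Theta(1/n)$ and $np=n(1+o(1))$, so an elementary estimate gives $\sqrt{(1-p)/(np)}\le C/n$ for a constant $C=C(c_0)$ and all large $n$ (for instance $1-p\le c_0/n$ and $np\ge n-c_0\ge n/2$ yield $\sqrt{(1-p)/(np)}\le\sqrt{2c_0}/n$). Hence $\q(\Gnp)\ge\alpha'/n\ge(\alpha'/C)\sqrt{(1-p)/(np)}$ whp on this window.

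Setting $\alpha=\min\{\alpha_1,\ \alpha'/C\}$ finishes the proof: for each $n$ the given $p(n)$ lies in one of the two intervals $[1/n,\,1-c_0/n]$ or $[1-c_0/n,\,1-(1+\eps)/n]$, and in either case $\q(\Gnp)\ge\alpha\sqrt{(1-p)/(np)}$ holds with probability $1-o(1)$; since both quoted ``whp'' statements are uniform in $p$ over their ranges, the same $o(1)$ works even if $p(n)$ oscillates between the two intervals as $n\to\infty$. There is no real obstacle here --- all the substance is in Theorem~\ref{thm.Gnp}(b) --- the only care needed is in taking $c_0$ large enough that the intervals meet and $c_1<c_2$, in the trivial $\Theta(1/n)$ comparison that converts an $\alpha'/n$ bound into an $\alpha'\sqrt{(1-p)/(np)}$ bound, and in noting that the cited ``whp'' results are uniform in $p$.
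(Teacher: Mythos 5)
Your proposal is correct and follows essentially the same route as the paper: cite Theorem~4.1 of the earlier work for $1/n \leq p \leq 1-c_0/n$, apply Theorem~\ref{thm.Gnp}(b) on the remaining window $1-c_0/n \leq p \leq 1-(1+\eps)/n$, and convert the $\alpha'/n$ bound via the elementary estimate $\sqrt{(1-p)/(np)} = O(1/n)$ there. The only (harmless) extra care you take is explicitly enlarging $c_0$ so that $c_1=1+\eps<c_2=c_0$, which the paper leaves implicit.
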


\subsection{Outline of the paper}

In the remainder of this section, we give definitions and preliminaries concerning modularity, and describe some relevant previous work. In Section 2, we give proofs for the new deterministic results above (Theorem~\ref{thm.detmod0}, Proposition~\ref{prop.detmod0b}, and Theorem~\ref{thm.bip_delta_all}), and for some related results on complete multipartite graphs; in Section 3 we give a short new proof (based on a recent expansion result) that complete multipartite graphs have modularity zero; in Section 4 we consider random graphs and prove Theorems~\ref{thm.Gnp} and~\ref{thm.from-modER} above; 
in Section 5 we discuss some natural variants of the definition of modularity (for example where we insist that the random graph with the given degrees is simple), and how these changes would impact our results; and in the final section we make a few concluding remarks.

\subsection{Definitions and preliminaries}\label{subsec.defns}
Given a non-empty graph $G$ (with at least one edge), we give a modularity score to each vertex-partition (or `clustering') : the modularity $\q(G)$ (sometimes called the `maximum modularity') of~$G$ is defined to be the maximum of these scores over all vertex-partitions. For a set $A$ of vertices, let the \emph{volume} $\vol(A)$ or $\vol_G(A)$ be the sum over the vertices $v$ in $A$ of the degree $d_v$, also let~$e(A)$ or~$e_G(A)$ denote the number of edges within $A$; and for $B$ disjoint from $A$, let $e(A, B)$ or $e_G(A,B)$ denote the number of edges between $A$ and $B$.
Given a vertex-partition $\cA$, let $\delta_{ij}(\cA)$ be
the indicator that vertices $i$ and $j$ are in the same part in the partition (so $\delta_{ii}(\cA)$ is always $1$). Also, denote by $e^{\rm int}_{\cA}(G)$ the number of edges in $G$ with both endpoints in the same part of $\cA$.

\begin{defn}[Newman \& Girvan~\cite{NewmanGirvan},  see also Newman~\cite{NewmanBook}]\label{def.mod}
Let $G=(V,E)$ be a graph with $m\geq 1$ edges. For a partition $\cA$ of $V$, the modularity score of $\cA$ on $G$ is 
\begin{equation*} q_\cA(G) = 
\frac{1}{2m}\sum_{i,j \in V}  \left( {\mathbf 1}_{ij\in E} - \frac{d_i d_j}{2m} \right) \delta_{ij}(\cA)
= \frac{1}{m}\sum_{A \in \cA} e(A) - \frac{1}{4m^2}\sum_{A\in \cA} \vol (A)^2; \end{equation*}
and the modularity of $G$ is $\q(G)=\max_\cA q_{\cA}(G)$, where the maximum is over all partitions~$\cA$ of~$V$.
\end{defn}

Isolated vertices are irrelevant. We need to give empty graphs (with no edges) some modularity value: conventionally we set $\q(G)=0$ for each such graph $G$, and we set $q_\cA(G)=0$ for every partition $\cA$ of $V(G)$. The second equation for $q_{\cA}(G)$ expresses modularity as the difference of two terms, the \emph{edge contribution} or \emph{coverage} $q^E_\cA(G)=\sum_A e(A)/m  =  e_\cA^{\rm int}(G)/m$, and the \emph{degree tax} $q^D_\cA(G)=\sum_A\vol(A)^2/(4m^2)$.  
Since, for any non-empty graph, we have $q^E_{\cA}(G) \leq 1$ and $q^D_{\cA}(G) >0$, we have $q_{\cA}(G)<1$ for any such graph $G$.  Also, the trivial partition $\cA_0$ with all vertices in one part has $q^E_{\cA_0}(G) = q^D_{\cA_0}(G) =1$, so $q_{\cA_0}(G)  = 0$.  Thus we have 
\begin{equation*} 0 \leq \q(G) < 1.\end{equation*}
Suppose that we use the configuration model (see for example~\cite{bollobas2001random, JLRbook, frieze2015book}) to pick a `nearly uniform' random pseudograph $R$ (where multiple edges and loops are allowed) with degree sequence $(d_1, \ldots,  d_n)$ where $\sum_i d_i = 2m$. Then the expected number of edges in $R$ between distinct vertices $i$ and $j$ is $d_i d_j/(2m-1)$; and it follows (see~\eqref{eqn.R}) that, if $m$ is large, the degree tax is approximately
the expected value of $e^{\rm int}_{\cA}(R)/m$, see~\eqref{eqn.R0} for exact calculations. 
This is the original rationale for the definition of the modularity score: whilst rewarding the partition for capturing a high proportion of edges within the parts, we should penalise by (approximately) the expected proportion of such edges. In Section~\ref{sec.mod_var} we amplify this discussion, and consider other possible distributions for the random (pseudo)graph $R$. Then we investigate when we get modularity~0 with some natural variant versions of this definition.
 
\smallskip

\needspace{4\baselineskip}\noindent
\emph{Random graphs}

Let~$n$ be a positive integer.  Given $0 \leq p \leq 1$, the (binomial) random graph $\Gnp$ has vertex set $[n]:=\{1,\ldots,n\}$ and the $\binom{n}{2}$ possible edges appear independently with probability $p$.  Given an integer $m$ with $0 \leq m \leq \binom{n}{2}$, the (Erd\H{o}s-R\'enyi) random graph $\Gnm$ is sampled uniformly from the $m$-edge graphs on vertex set $[n]$.  These two random graphs are closely related when $m \approx \binom{n}{2} p$, see for example~\cite{frieze2015book, luczak1990equivalence}. 

Recall, that for a sequence of events $A_n$ we say that $A_n$ holds \emph{with high probability (whp)} if $\pr(A_n) \to 1$ as $n \to \infty$. For a sequence of random variables $X_n$ and a real number $a$, we write $X_n \overset{p} \rightarrow a$ if $X_n$ converges in probability to $a$ as $n \to \infty$ (that is, if for each $\eps>0$ we have $|X_n-a| < \eps$ whp). For $x=x(n)$ and $y=y(n)$ we write $x \sim y$ to indicate that $\,x= (1+o(1))y\,$ as $n \to \infty$.

\smallskip

\noindent
\emph{More on modularity}

Let $G=(V,E)$ be a nonempty graph, and let $\nu = \vol(G)$.  For $U \subseteq V$, let $p_G(U) = 2e(U) \nu - \vol(U)^2$. Observe that $p_G(\emptyset)=p_G(V)=0$. If $\cA$ is a partition of $V(G)$, then $q_{\cA}(G) = \nu^{-2}  \sum_{A \in \cA} p_G(A)$.

There are two useful alternative expressions for $p_G(U)$. Since $\vol(U) = 2e(U) + e(U,\overline{U})$ (where~$\overline{U}$ denotes $V \backslash U$), we have
\begin{eqnarray*}
  p_G(U) &=& \notag
  2e(U) \nu - \vol(U)^2 = (\vol(U) - e(U,\overline{U})) \nu - \vol(U) (\nu - \vol(\overline{U}))\\
& = & - e(U,\overline{U}) \nu  + \vol(U) \vol(\overline{U})\,.
\end{eqnarray*}
Thus  
\begin{equation} \label{eqn.pG1}
    p_G(U) = \vol(U) \vol(\overline{U})  - e(U, \overline{U}) \nu\,.
\end{equation}
Similarly
\[  p_G(U) 
= 4 e(U) ( e(U)+e(U,\overline{U}) + e(\overline{U})) - (2e(U)+e(U,\overline{U}))^2\,, \]
and so
\begin{equation} \label{eqn.pG2}
p_G(U) = 4e(U)e(\overline{U}) - e(U,\overline{U})^2.
\end{equation}
These expressions for $p_G(U)$ are symmetric in $U$ and~$\overline{U}$, so $p_G(U) = p_G(\overline{U})$.
Now, if $\cA$ is a bipartition with parts $U$ and $\overline{U}$, then
\[ q_{\cA}(G) = \nu^{-2} (p_G(U) + p_G(\overline{U})) = 2 \nu^{-2} p_G(U).\]
Thus there is a bipartition $\cA$ with $q_{\cA}(G)>0$ if and only if  there is a set $U$ of vertices with $p_G(U)>0$, if and only if $\q(G)>0$. We have thus shown the following result, which we record as a lemma.  

\smallskip

\begin{lemma}\label{lem.pZero} 
Let $G$ be a graph with at least one edge. Then the following are equivalent : (i) $\q(G)=0$, (ii) $q_\cA(G) \leq 0$ for each bipartition $\cA$ of $V(G)$, and (iii) $p_G(U) \leq 0$ for each $U \subseteq V(G)$.
\end{lemma}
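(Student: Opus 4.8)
The statement to prove is Lemma~\ref{lem.pZero}, characterising when $\q(G)=0$ in terms of bipartitions and the quantity $p_G(U)$. In fact the excerpt has essentially already carried out the argument in the paragraphs immediately preceding the lemma; the plan is simply to assemble those observations into a clean chain of implications.

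\medskip

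\textbf{Plan.} I would prove the cyclic chain $(i)\Rightarrow(ii)\Rightarrow(iii)\Rightarrow(i)$. First, $(i)\Rightarrow(ii)$ is immediate: if $\q(G)=0$ then $q_\cA(G)\le \q(G)=0$ for \emph{every} partition $\cA$, in particular for every bipartition. Next, $(ii)\Rightarrow(iii)$: given $U\subset V(G)$, apply (ii) to the bipartition $\cA=\{U,\bar U\}$ (discarding the trivial cases $U=\emptyset$ or $U=V$, for which $p_G(U)=0$ directly from the definition); by the displayed identity $q_\cA(G)=\nu^{-2}(p_G(U)+p_G(\bar U))=2\nu^{-2}p_G(U)$, which uses the symmetry $p_G(U)=p_G(\bar U)$ established just above the lemma, we get $2\nu^{-2}p_G(U)\le 0$, hence $p_G(U)\le 0$ since $\nu=\vol(G)>0$ (as $G$ has an edge). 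Finally, $(iii)\Rightarrow(i)$: for an arbitrary partition $\cA=\{A_1,\dots,A_k\}$ we have $q_\cA(G)=\nu^{-2}\sum_i p_G(A_i)\le 0$ by (iii) applied to each part $A_i$; taking the maximum over $\cA$ gives $\q(G)\le 0$, and combined with $\q(G)\ge 0$ (from the displayed bounds $0\le\q(G)<1$) we conclude $\q(G)=0$.

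\medskip

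\textbf{Remarks on the steps.} The only genuine content beyond bookkeeping is the algebraic identity $p_G(U)=\vol(U)\vol(\bar U)-e(U,\bar U)\nu$ and its consequence $p_G(U)=p_G(\bar U)$, together with the formula $q_\cA(G)=\nu^{-2}\sum_{A\in\cA}p_G(A)$ — all of which are derived in the excerpt preceding the statement, so I may quote them freely. There is no real obstacle here; the one point requiring a word of care is handling degenerate sets ($\emptyset$, $V$) and confirming $\nu>0$ so that dividing by $\nu^2$ and by $2\nu^{-2}$ is legitimate — both are guaranteed by the hypothesis that $G$ has at least one edge. The ``hard part,'' such as it is, is merely being careful that $(iii)$ quantifies over \emph{all} proper subsets while the reduction to bipartitions in $(ii)$ only directly tests the bipartitions $\{U,\bar U\}$; the equivalence works because $q_{\{U,\bar U\}}(G)$ and $p_G(U)$ have the same sign, and an arbitrary partition's score is a sum of per-part contributions each of the form $p_G(A)$.
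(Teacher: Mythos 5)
Your proof is correct and follows essentially the same route as the paper: the paper establishes the identity $q_{\cA}(G)=2\nu^{-2}p_G(U)$ for a bipartition $\cA=\{U,\bar U\}$ and the sum formula $q_\cA(G)=\nu^{-2}\sum_A p_G(A)$, and then records the lemma as the chain of equivalences you spell out. Your version just makes the cyclic implications and the degenerate cases ($U=\emptyset$, the trivial partition) explicit, which is fine.
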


Recall the very nice result of Dinh and Thai~\cite{dinh2011finding}, that for any $k\geq2$ there is a partition $\cA$ with at most $k$ parts such that $q_\cA(G) \geq \q(G)\,(1-1/k)$. Note that this implies the equivalence of~(i) and (ii) in the lemma (that is, $\q(G)>0$ if and only if there is a bipartition $\cA$ with $q_\cA(G)>0$).

\smallskip

\noindent
\emph{Robustness}

The following `robustness' result will be used in the proof of Theorem~\ref{thm.Gnp}. It is essentially Lemma~5.1 in~\cite{ERmod}. Since we set $\q(G)=0$ if $G$ has no edges, we can rephrase the lemma there as the following.

\smallskip

\begin{lemma}\label{lem.edgeSim}
If $G=(V,E)$ is a graph, $E_0$ is a non-empty subset of $ E$, and 
$G'= (V,E \setminus E_0)$, then
\[ |\q(G)-\q(G')|< 2 \, |E_0|/|E|. \]  
\end{lemma}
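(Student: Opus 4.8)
The plan is to prove the two inequalities $\q(G)-\q(G') < 2m_0/m$ and $\q(G')-\q(G) < 2m_0/m$ separately, writing $m=|E|$, $m_0=|E_0|$, $m'=m-m_0$ and $\rho=m_0/m$. For the first inequality, fix an optimal partition $\cA$ of $G$, so $q_\cA(G)=\q(G)$; since $\q(G')\geq q_\cA(G')$ it suffices to compare $q_\cA(G)$ with $q_\cA(G')$ for this one partition. For the second, use an optimal partition $\cB$ of $G'$ and compare $q_\cB(G)$ with $q_\cB(G')$. Two easy cases come first. If $m_0\geq m/2$ then $2m_0/m\geq 1 > |\q(G)-\q(G')|$, as modularities lie in $[0,1)$. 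If instead the modularity of the source graph (meaning $\q(G)$ in the first direction, $\q(G')$ in the second) is less than $2m_0/m$, then $\q(\text{source})-\q(\text{target}) \leq \q(\text{source}) < 2m_0/m$. So in the remaining case of the first direction we may assume $q_\cA(G)\geq 2m_0/m$, and then $q^E_\cA(G)\leq 1$ forces the degree tax $q^D_\cA(G)=q^E_\cA(G)-q_\cA(G)\leq 1-2m_0/m$; this bounding of the degree tax away from $1$ is exactly what makes the estimate go through.

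Next I would track how the edge contribution $q^E_\cA=\frac{1}{m}\sum_A e(A)$ and the degree tax $q^D_\cA=\frac{1}{4m^2}\sum_A\vol(A)^2$ respond to the edit. The edge contribution is straightforward: within-part edges change by a total of at most $m_0$, and replacing the denominator $m$ by $m'$ contributes the explicit factor $1/(1-\rho)$, so in the deletion direction $q^E_\cA(G')\geq\big(q^E_\cA(G)-\rho\big)/(1-\rho)$. For the degree tax, monotonicity of volumes gives $q^D_\cA(G')\leq q^D_\cA(G)/(1-\rho)^2$ in the deletion direction, while in the addition direction a short Cauchy--Schwarz argument (bounding $\sum_B\vol_{G'}(B)\,g_B$, where $g_B=\vol_G(B)-\vol_{G'}(B)\geq 0$ is the volume added inside $B$, together with $\sum_B g_B^2\leq(\sum_B g_B)^2=(2m_0)^2$) yields the clean bound $q^D_\cB(G)\leq\big((1-\rho)\sqrt{q^D_\cB(G')}+\rho\big)^2$. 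Substituting these into the decomposition $q_\cA=q^E_\cA-q^D_\cA$ and abbreviating the source-graph edge contribution and degree tax by $E$ and $D$, one finds the score drops by at most $\rho$ times an explicit rational function of $\rho,E,D$; a short calculation shows this function is at most $2$, using $E\leq 1$ and, in the first direction, $D\leq 1-2\rho$ (in the second direction $E'\leq 1$ alone suffices). The inequalities are strict because at least one vertex changes degree, so the volume estimate is strict.

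The step I expect to be the obstacle is controlling the degree tax. The crude bound $\sum_A\vol(A)^2\leq\big(\sum_A\vol(A)\big)^2=4m^2$ is off by about a factor of two in the balanced case and yields only something like $|\q(G)-\q(G')|<8m_0/m$, which is useless; it is near-tight only for the trivial one-part partition, whose score is $0$. So one genuinely has to exploit that a positive-scoring partition has degree tax bounded away from $1$, and has to combine the degree-tax change with the edge-contribution change rather than bounding each alone --- separately each term moves by $\Theta(\rho)$ with a poor constant, and it is the cancellation between them that delivers the sharp constant $2$.
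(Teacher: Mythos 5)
The paper does not actually prove this lemma: it is imported from~\cite{ERmod} (restated there to cover the convention $\q(G')=0$ when $G'$ has no edges), so there is no internal argument to compare yours against, and your proof has to stand on its own. It does. I checked the key steps: the reduction to two one-sided inequalities via an optimal partition of the source graph; the disposal of the cases $m_0\ge m/2$ and $\q(\mathrm{source})<2\rho$; the deletion-direction bounds $q^E_\cA(G')\ge (E-\rho)/(1-\rho)$ and $q^D_\cA(G')\le D/(1-\rho)^2$; and the Cauchy--Schwarz bound $q^D_\cB(G)\le\big((1-\rho)\sqrt{D'}+\rho\big)^2$ in the addition direction. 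In that second direction the quantity to bound is $E'-D'(2-\rho)+2(1-\rho)\sqrt{D'}+\rho$, a downward parabola in $\sqrt{D'}$ with maximum $1+\rho+(1-\rho)^2/(2-\rho)<2$, so $E'\le 1$ alone indeed suffices, as you claim. One caution on the deletion direction: the bracket that comes out is $\tfrac{1-E}{1-\rho}+\tfrac{D(2-\rho)}{(1-\rho)^2}$, and the two \emph{separate} bounds $E\le 1$ and $D\le 1-2\rho$ do not force this below $2$ (e.g.\ $E=0$, $D=1-2\rho$ violates it); you must carry the joint constraint $E-D=q_\cA(G)\ge 2\rho$ through the optimization, substituting $E\ge D+2\rho$ before maximizing over $D\le 1-2\rho$, which yields $\tfrac{(1-2\rho)(2-\rho)}{(1-\rho)^2}=2-\tfrac{\rho}{(1-\rho)^2}<2$. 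Since that constraint is exactly your standing assumption in this case, this is a presentational slip to fix in the write-up, not a gap; it also shows the strictness follows from the algebra alone, without appeal to strictness of the volume estimates.
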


\smallskip

\needspace{2\baselineskip}
\noindent
\subsection{Relevant previous work on modularity}\label{subsec.prev_work}
As discussed above the following graphs are known to have zero modularity: complete graphs~\cite{nphard} and complete multipartite graphs~\cite{majstorovic2014note, bolla2015spectral}. There are some general lower bounds which  can show strictly positive modularity : if we let $\bar{d}$ denote the average degree of a graph, then connected graphs with maximum degree $o(n)$ have modularity at least $2/\bar{d}-o(1)$~\cite{prokhorenkova2017modularity}, and both preferential attachment graphs~\cite{mod2023universal,prokhorenkova2017modularity} and deterministic graphs with mild assumptions on the degree sequence~\cite{mod2023universal} have modularity $\Omega(\bar{d}^{-1/2})$. 

At the other extreme, which graphs have modularity near 1? De Montgolfier, Soto and Viennot~\cite{modgraphclasses} define a sequence of graphs to be \emph{maximally modular} if their modularity values tend to 1 as their number $m$ of edges tends to infinity. The following are known to be maximally modular: trees with maximum degree $o(m)$, graphs of bounded genus with maximum degree $o(m)$~\cite{lasonsulkowska2023modularity}, graphs where treewidth times maximum degree is $o(m)$~\cite{treelike}, lattices~\cite{modgraphclasses,thesis}, whp random graphs $G_{n,p}$ and~$G_{n,m}$ with average degree at most $1+o(1)$~\cite{ERmod}, whp random hyperbolic graphs~\cite{chellig2022modularity}, and whp spatial preferential attachment graphs~\cite{prokhorenkova2017modularity}. 
There is also work on when modularity is able to pick up planted structure in random graphs; the stochastic block model (SBM) is considered  
in~\cite{bhamidi2026stochastic,bickel2009nonparametric}, the degree corrected SBM in~\cite{bickel2015correction,zhao2012consistency} and a related ABCD model in~\cite{kaminski2022modularity}. 

We consider alternative definitions for the degree tax in the modularity score in Section~\ref{sec.mod_var} of this paper. We note that several definitions for the modularity score exist for hypergraphs~\cite{kaminski2024modularity,poda2024comparison} -- see~\cite{poda2024comparison} for a comparison of the different definitions.

For a summary of known modularity values, see the table in~\cite{chapter}.
Notable recent results include improved bounds for random regular graphs, which were obtained in~\cite{lichev2022modularity} -- in particular the modularity of random cubic graphs was shown to lie in the interval $[0.667, 0.79]$ whp. Additionally, for the preferential attachment model $G_n^h$ (where $h\geq 2$ edges are added with each new vertex), the modularity satisfies 
$\: \Omega(1/\sqrt{h})=\q(G_n^h)=O(\sqrt{\log h}/\sqrt{h})$ whp. Here the lower bound is known to hold for generalised preferential attachment models~\cite{prokhorenkova2017modularity,mod2023universal}, and the upper bound follows from results showing the expansion of a set of vertices is linked to the `ages' of those vertices -- see~\cite{rybarczyksulkowska2025PA}.

Lastly, it is known that for any $n$-vertex base graph~$G$, letting $G_p$ be the `percolated' random graph obtained by keeping each edge with probability~$p$, then $\q(G_p)\rightarrow \q(G)$ in probability if $e(G)p/n\rightarrow \infty$ as $n \to \infty$~\cite{sampling}.

As mentioned earlier, Theorem~\ref{thm.detmod0} corrects Lemma~9 and the corresponding part of Theorem~2 in~\cite{trajanovski2012maximum}, which claimed that $\max_{|\cA|=2} q_\cA(G) < 0$ if $e(G)\in [\binom{n-1}{2}+2, \binom{n}{2}]$ (where the maximum is over all bipartitions $\cA$).  
This claim is also restated in Theorem~2 of~\cite{trajanovski2012maximum}, as the third range of the number $m$ of edges.
Recall, for example by Lemma~\ref{lem.pZero}, that $\q(G)>0$ if and only if there exists a bipartition $\cA$ 
with $q_\cA(G)>0$. Note also that $\binom{n-1}{2}+2 = \binom{n}{2}-n+3$. Thus, if Lemma~9 of~\cite{trajanovski2012maximum} were correct, it would imply that if $e(G) \geq \binom{n}{2}-n+3$ then 
$\q(G)=0$; that is, that $f^q(n)\geq n-2$, in contradiction to~Theorem~\ref{thm.detmod0}.

\needspace{8\baselineskip}
\section{Proofs for new deterministic results}
\label{sec.detmod0}
In this section we first prove Theorem~\ref{thm.detmod0} on $f^q(n) = \delta^-(K_n)$, and then quickly prove Proposition~\ref{prop.detmod0b} which characterises the corresponding extremal graphs.  Then, after a couple of preliminary lemmas, we prove Theorem~\ref{thm.bip_delta_all}, in which we determine $\delta^-(G)$, $\delta^+(G)$ and $\delta(G)$ for all complete bipartite graphs $G$.
Finally we give a couple of examples and briefly consider multipartite graphs. (Recall that $\delta^+, \delta^-, \delta$ are the minimum numbers of edges that may be added, removed or edited to obtain a graph with positive modularity - see the  paragraphs preceding Theorem~\ref{thm.bip_delta_all}
for definitions.)

\subsection{Nearly complete graphs: proof of Theorem~\ref{thm.detmod0}}
\label{subsec.21}

We first give a straightforward proof of the upper bound on $f^q(n) \,(=\delta^-(K_n) =\delta(K_n)$) for $n \geq 4$. We proceed via a slightly stronger result, since it will be helpful to refer back to it later. For disjoint vertex sets $A$ and $B$, write $E(A,B)$ for the set of edges between $A$ and $B$. 

\begin{clm}\label{clm.upperbound} Let $n\geq 4$, let $\cA=(A, B)$ be a bipartition of $[n]$ where $|A|= \lceil n/2 \rceil$ and $|B|= \lfloor n/2 \rfloor$, and let $G$ be obtained from $K_n$ by deleting any $\lfloor n/2\rfloor+1$ edges from $E(A,B)$. Then $\q(G)>0$.
\end{clm}

\begin{proof}[Proof of Claim~\ref{clm.upperbound}]
Let $F=E(A,B)$ be the set of edges between the parts, and let $k= \lfloor n/2 \rfloor +1$. Note that if $n$ is even then $|F| = n^2/4 \geq n \geq k$, and if $n$ is odd then $|F| = (n^2-1)/4 \geq n-1/4 \geq k$. 
Thus we may form $G$ by deleting any $k$ edges from $F$, so $G$ has $m=\binom{n}{2}-k$ edges. We consider separately the cases when $n$ is even, odd.

Suppose first that $n$ is even, so $k=n/2 +1$ and $m=\frac{1}{2}(n^2-2n-2)$. Since $|A|=|B|$ and all removed edges are between $A$ and $B$, we have $\vol(A)= \vol(B)$ and thus $q^D_{\cA}(G) =1/2$. Also
\[ q^E_{\cA}(G) = \frac{2\binom{n/2}{2}}{m} =\frac{n(n-2)}{2(n^2-2n-2)} = \frac12 + \frac{1}{n^2-2n-2}.\]
Hence $q_\cA(G) >0$, as required.

Now suppose that $n$ is odd, so $k=(n+1)/2$ and $m= \frac12 (n^2-2n-1) > 1$. Then
\begin{eqnarray*}
q^E_{\cA}(G) &=&
\frac{\binom{\lceil n/2 \rceil}{2}+\binom{\lfloor n/2 \rfloor}{2}}{m} \;\; = \;\; \frac{(n\!+\!1)(n\!-\!1)+ (n\!-\!1)(n\!-\!3)}{8m}\,,
\end{eqnarray*}
and we may check that the numerator is $4m+4$, so 
\begin{eqnarray*}
q^E_{\cA}(G)
\;\;\: = \;\; \frac12 + \frac{1}{2m}\,.
\end{eqnarray*}
Also $\vol(A)= \frac12 (n+1)(n-1) -k = \frac12(n^2-n-2)$ and $\vol(B)= \frac12 (n-1)^2 -k = \frac12(n^2-3n)$, 
so
\begin{eqnarray*}
q^D_{\cA}(G) &=&
\frac{1}{4}\, \frac{(n^2-n-2)^2+ (n^2-3n)^2}{4m^2}\\
&=&
\frac14 \, \frac{8m^2 + 4m+4}{4m^2}\;\; = \;\;
\frac12 + \frac{m+1}{4m^2}\,.
\end{eqnarray*}
Hence $\,q_\cA(G) = (m-1) / 4m^2 >0$, and this completes the proof of the claim. 
\end{proof}

\bigskip

\begin{proof}[Proof of Theorem~\ref{thm.detmod0}]
The upper bound in the theorem follows by Claim~\ref{clm.upperbound}, so
it remains to prove the lower bound. 
We must show that if $n \geq 4$ and we form $G$ by removing $x \leq n/2$ edges from $K_n$ then $\q(G)=0$.
Let us note first one useful inequality.
Let $G$ be a graph on vertex set $V$, let $U \subseteq V$, and form $G'$ by `moving' an edge currently between $U$ and $\overline{U}$ to the location of a missing edge within $U$ or $\overline{U}$.  Then by the expression~\eqref{eqn.pG2} for $p_G(U)$,
\begin{equation}\label{eqn.mono}
p_{G'}(U) > p_G(U).
\end{equation}
(This is similar to the `local rewiring' result of~\cite[Lemma~1]{trajanovski2012maximum}.)

Now suppose that the graph $H$ on $V=[n]$ and $\emptyset \neq U \subset V$ are such that~$p_H(U)$ maximises the value $p_G(W)$ over all graphs $G$ on $V$ with at most $n/2$ edges missing and all $\emptyset \neq W \subset V$. Since the number of edges missing is at most $n/2<n-1$ there are edges of $H$ in $(U,\overline{U})$. Hence by inequality\,(\ref{eqn.mono}), no edges within $U$ or within $\overline{U}$ are missing. 

Suppose that $|U|=j$ (where $1 \leq j \leq n-1$) and $x$ edges are missing from $H$. Then
\begin{eqnarray*}
p_H(U) &=&
\vol_H(U) \vol_H(\overline{U})  - e_H(U, \overline{U}) \vol(H) \\
&=&
(j(n-1)-x)((n-j)(n-1)-x) - (j(n-j)-x)(n(n-1)-2x)\\
&=&
-j(n-j)(n-1) + 2xj(n-j) - x^2\\
&=&
j(n-j)(2x-(n-1)) -x^2.
\end{eqnarray*}
Hence $p_H(U)<0$ if $x \leq (n-1)/2$.
Suppose that $x=n/2$.  Then $p_H(U)= j(n-j) -(n/2)^2$, so $p_H(U)<0$ if $j \neq n/2$, and $p_H(U)=0$ if $j = n/2$.
This completes the proof of Theorem~\ref{thm.detmod0}.
\end{proof}

\subsection{Extending Theorem~\ref{thm.detmod0}, proof of Proposition~\ref{prop.detmod0b}}

A proof of Proposition~\ref{prop.detmod0b} follows quickly from the following simple lemma and the proof of Theorem~\ref{thm.detmod0}.

\begin{lem}\label{lem.nearly_balanced}
Let $H$ be a bipartite graph on $[n]$ with $k \leq n/2 +1$ edges.
Then there is a bipartition $\cA=(A, B)$ of $[n]$ such that all edges in $H$ are between the parts, and where $|A|= \lceil n/2 \rceil$ and $|B|= \lfloor n/2 \rfloor$; except possibly in the case where $n$ is even and $k=n/2 +1$, when we may still insist that $n/2 -1 \leq |A|, |B| \leq n/2+1$.
\end{lem}

It is possible to be more precise in the case when $n$ is even and $k=n/2+1$, see Proposition~\ref{prop.unbalancedmeans1star}.

\begin{proof}[Proof of Lemma~\ref{lem.nearly_balanced}]
Let $\cA=(A,B)$ be a bipartition of $[n]$ with $E(H) \subseteq E(A,B)$, with $|A| \geq |B|$, and with $|A|$ as small as possible.
Thus $|A| \geq \lceil n/2 \rceil$.
If  $|A|= \lceil n/2 \rceil$ then of course $|B|= \lfloor n/2 \rfloor$.

 Now suppose that $|A| > \lceil n/2 \rceil$.  Then $A$ contains no isolated vertex $v$ (since we could move $v$ to $B$, and make $|A|$ one smaller), so $|A| \leq k$.  Now $n$ must be even and $k=n/2 +1$, and $n/2 -1 \leq |A|, |B| \leq n/2+1$, as required.
\end{proof}

\begin{proof}[Proof of Proposition~\ref{prop.detmod0b}] 
Let the graph $G$ on $[n]$ be such that $e(\overline{G})= \lfloor n/2 \rfloor +1 =f^q(n)$. In part (a) of the proof, we show that if $\overline{G}$ is bipartite,  
then $\q(G)>0$.  In part (b) of the proof, we show that if $\q(G)>0$, then $\overline{G}$ is bipartite.

(a)
Suppose that $\overline{G}$ is bipartite. We consider the two cases in the conclusion of Lemma~\ref{lem.nearly_balanced} separately.
For the case when  $|A|= \lceil n/2 \rceil$ and $|B|= \lfloor n/2 \rfloor$,
$\q(G)>0$ follows directly from Claim~\ref{clm.upperbound}. So it remains to consider the case when $n$ is even, $n \geq 6$, $k=\frac12 n +1$, $|A|=\frac12 n+1$, $|B|=\frac12 n -1$ and 
all edges in $H$ are between $A$ and $B$.

We must show that $q_\cA(G)>0$.
As before, $m= \binom{n}2-k= \binom{n}2 -\frac12n -1= \frac12 (n^2 -2n-2)$.
We may check that
\[ \binom{\frac12 n +1}2 + \binom{\frac12 n -1}2 = \tfrac14 (n^2 -2n +4),\]
and thus
\[ q_{\cA}^E(G) = \tfrac{\frac14(n^2-2n+4)}{\frac12 (n^2-2n-2)} = \tfrac12 + \frac{3}{n^2-2n-2}.\]
Also
\begin{eqnarray*}
 q_{\cA}^D(G) &=&
 \frac{\frac14 ( (n^2-4n)^2 + (n^2-4)^2)}{(n^2-2n-2)^2}\\
 &=&
 \tfrac12 \, \frac{n^4-4n^3+4n^2+8}{n^4-4n^3+4n+4}\\
 &=&
 \tfrac12 + \frac{2n^2-2n+2}{n^4 -4n^3+4n+4}. 
 \end{eqnarray*}
 Now
 \[ q_{\cA}^E(G) - \tfrac12 = \frac{3}{n^2-2n-2} =  \frac{3n^2-6n-6}{(n^2-2n-2)^2},\]
so
\[ q_{\cA}^E(G) - q_{\cA}^D(G) =
\frac{n^2-4n-8}{(n^2-2n-2)^2}
=
\frac{(n-2)^2-12}{(n^2-2n-2)^2}
\]
which is $>0$ for all $n \geq 6$.  Thus $q_\cA(G)>0$, as required.

\smallskip

(b) 
Now we show that if $\q(G)>0$, then $\overline{G}$ is bipartite.  Suppose that $\overline{G}$ is not bipartite: we shall show that $\q(G)=0$, and thus complete the proof.

Let $\cA = (A, B)$ be any bipartition of $V(G)$.  Then
$e^{\rm int}_\cA(G) \leq \binom{|A|}2+\binom{|B|}2 -1$.
Suppose first that $n$ is even.
Then
$e^{\rm int}_\cA(G) \leq 2 \binom{n/2}2 -1 = \frac14 (n^2-2n-4)$; and
$e(G) = \binom{n}2- \frac12 n -1 = \frac12 (n^2 - 2n -2)$.  Hence
\[q^{E}_\cA(G)  \leq \tfrac12 \frac{n^2-2n-4}{n^2-2n-2} < \tfrac12.\]

Now suppose that $n$ is odd.  Then
\begin{eqnarray*}
 e^{\rm int}_\cA(G) 
 &\leq & \textstyle \binom{(n-1)/2}2 + \binom{(n+1)/2}2 -1 \\
 &=& 
\tfrac18 ((n-1)(n-3) + (n^2-1))-1\\
&=& \tfrac14 (n^2-2n-3),
\end{eqnarray*}
and
$e(G) = \binom{n}2- \tfrac12 n -\tfrac12 = \tfrac12 (n^2 -2n -1)$.  Hence
\begin{eqnarray*}
q^{E}_\cA(G) 
&\leq & 
\tfrac12 \frac{n^2-2n-3}{n^2-2n-1} < \tfrac12.
\end{eqnarray*}
Hence, in both the even and the odd case, 
$q^{E}_\cA(G) < \frac12$, and so $q_\cA(G)<0$ (since $q^D_{\cB}(G)\geq 1/2$ for any bipartition $\cB$). It now follows by Lemma~\ref{lem.pZero} that $\q(G)=0$, as required.
\end{proof}

 Let us say that a bipartition $\{A,B\}$ is \emph{balanced} if $||A|-|B||\leq 1$. Lemma~\ref{lem.nearly_balanced} establishes that, if $k\leq \lfloor n/2\rfloor+1$, then any~$k$-edge bipartite graph $H$ on $[n]$ has a balanced bipartition with all edges between parts, except possibly for the case when $n$ is even and $k=n/2+1$. 
 Below, we further consider this last case, and show the following.

\begin{prop}\label{prop.unbalancedmeans1star}
Let $n \geq 4$ be even, let $k=n/2+1$, and let $H$ be a $k$-edge bipartite graph on~$[n]$.  Then there is no balanced bipartition of~$[n]$ such that the edges of $H$ lie between the parts if and only if $H$ less any isolated vertices and edges is a star with at least $3$ edges.    
\end{prop}

\begin{proof} Suppose first that $H$ less any isolated vertices and edges is a star with at least 3 edges.  Let $(A,B)$ be any bipartition of $[n]$ with $E(H) \subseteq E(A,B)$, and suppose wlog that the centre of the star is in $B$.  Then each vertex in $A$ has degree at most 1, so $|A| \geq k$, and the bipartition is not balanced.

Now suppose that there is no balanced bipartition as required.  Let $(A,B)$ be a bipartition of~$[n]$ such that the edges of $H$ lie between the parts, with $|A| >n/2$, and with $|A|$ as small as possible.  Then there are no isolated vertices $v$ in $A$ (since we could move $v$ to $A$).  Thus, since $|A|=k=|E(H)|$, each vertex in $A$ has degree~1.  
Hence, the graph $H$ is a disjoint union of proper-stars, edges and isolated vertices; where there is at least one proper-star, and the centres of the stars and the isolated vertices are in $B$. (Here, we say `proper-star' if the star has at least $2$ edges.) 

It remains to show that there is at most one proper-star, and then that this proper-star has $\geq 3$ edges.  
Suppose for a contradiction that there are at least 2 proper stars, pick a smallest one $S$, and swap sides for its vertices, to obtain the amended bipartition $(A',B')$.
Then $|A'| \leq |A|-1 = n/2$, 
and $|A'|$ is at least the number of non-isolated vertices in $B'$.  Hence, we can redistribute the isolated vertices of $H$ to obtain a balanced bipartition for $H$. This contradiction
shows there is at most one proper-star. %
Finally, simply note that if there is one proper-star and it has $2$ edges, then there is balanced bipartition, which completes the proof.
\end{proof}

\begin{rem}\label{rem.was_part_a} Proposition~\ref{prop.detmod0b} also implies the following. Let $0 \leq k \leq \lfloor n/2 \rfloor +1$, and let the  graph $G$ 
be obtained from $K_n$ by deleting the edges of a $k$-edge bipartite graph $H$ on $[n]$. Then $\delta^-(G)=\lfloor n/2\rfloor+1-k$. We may see this since if $k=\lfloor n/2\rfloor+1$ then $\q(G)>0$; and if we delete $k \leq \lfloor n/2\rfloor$ edges of a bipartite graph between $A$ and $B$ with $||A|-|B||\leq 1$, we may delete a further $\lfloor n/2\rfloor + 1 - k$ edges from between $A$ and $B$ and apply Claim~\ref{clm.upperbound}.
\end{rem}

\subsection{Local graph modifications}
We shall use the following lemma in the proof of part (a) of Theorem~\ref{thm.bip_delta_all}. Note that in the lemma below an edge between the parts is removed or an edge within a part added, whilst in the inequality~\eqref{eqn.mono} above an edge from between the parts is moved to within the parts.

\smallskip
\begin{lemma}\label{lem.twoparts_removeedge_or_addedge} 
Let $G$ be a graph and let $\cA=\{A, \overline{A}\}$ be a bipartition with $q_{\cA}(G) \geq 0$.
\begin{itemize}
    \item[(i)] 
    If $m \geq 2$ and $G^-$ is obtained by removing an edge between the parts, then $\, q_\cA(G^-)> q_\cA(G)$. 
    \item[(ii)] If $e_G(A)>0$ and $G^+$ is obtained by adding an edge within $\overline{A}$, then $q_\cA(G^+) > 0$. 
    \end{itemize}
\end{lemma}

\begin{proof}
The proof is almost immediate from~(\ref{eqn.pG2}), which shows that
\begin{equation}\label{eq:two_parts} q_\cA(G) = \frac{1}{2m^2}(4e_G(A)e_G(\overline{A})-e_G(A, \overline{A})^2).
\end{equation}
To prove part (i), note that in $G^-$ the quantities $e(A)$ and $e(\overline{A})$ are unchanged from $G$, but $e(A, \overline{A})$ has decreased by one. Thus by~\eqref{eq:two_parts} it follows that $q_\cA(G^-)>q_\cA(G) \: (\geq 0)$, as claimed. 

Part (ii) follows similarly. In this case, $e(A, \overline{A})$ and $e(A)>0$ are the same in $G$ and $G^+$, but $e(\overline{A})$ has increased by one, so by~\eqref{eq:two_parts} we may conclude $q_\cA(G^+)>0$.
\end{proof}

Given an $n$-vertex graph $G$, by adding $(\binom{n}{2} - \frac{n}{2} -e(G))^+$ edges we may form a graph $G_0$ with at most~$\frac{n}2$ edges missing, and then $\q(G_0)=0$ since $f^q(n) >n/2$ by Theorem~\ref{thm.detmod0}.  Hence, by Lemma~\ref{lem.edgeSim} (the robustness lemma), we have the following result, which extends the bound from Theorem~\ref{thm.detmod0} that $f^q(n) >n/2$. The notation $x^+$ means $\max \{x, 0\}$.

\smallskip

\begin{lemma}
\label{lem.qleq}
For each $n \geq 2$ and each nonempty $n$-vertex graph $G$
\[ \q(G) \leq \frac{2(\binom{n}{2} - \frac{n}{2} - e(G))^+}{e(G)}. \]
\end{lemma}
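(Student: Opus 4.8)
The plan is to derive the bound from Theorem~\ref{thm.detmod0} together with the robustness Lemma~\ref{lem.edgeSim}. For $n \le 3$ every graph has $\q(G)=0$ and there is nothing to prove, so assume $n \ge 4$, and write $x := \binom{n}{2} - e(G)$ for the number of non-edges of $G$. If $x \le \lfloor n/2 \rfloor$ then $x < f(n)$, so $\q(G) = 0$ by Theorem~\ref{thm.detmod0}, and the asserted inequality holds since its right-hand side is non-negative. So assume $x \ge \lfloor n/2 \rfloor + 1$, put $t := x - \lfloor n/2 \rfloor \ge 1$, and let $G_0$ be obtained from $G$ by adding $t$ edges (possible since $G$ has $x \ge t$ non-edges), so that $G_0$ has exactly $\lfloor n/2 \rfloor$ non-edges and hence $\q(G_0) = 0$ by Theorem~\ref{thm.detmod0}. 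Applying Lemma~\ref{lem.edgeSim} with $G_0$ in the role of the larger graph, $G$ in the role of the smaller, and the $t$ added edges as the deleted set, and using $\q(G_0)=0$, gives
\[ \q(G) \;<\; \frac{2t}{e(G_0)} \;=\; \frac{2t}{e(G)+t} \;\le\; \frac{2t}{e(G)}. \]

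When $n$ is even this completes the proof, since then $t = x - n/2 = \binom{n}{2} - \tfrac{n}{2} - e(G) = \bigl(\binom{n}{2} - \tfrac{n}{2} - e(G)\bigr)^+$ (as $t \ge 1$), so the display is precisely the claimed bound. When $n$ is odd we have $t = x - \tfrac{n-1}{2} = \bigl(\binom{n}{2} - \tfrac{n}{2} - e(G)\bigr)^+ + \tfrac12$, so the argument so far loses an additive $1/e(G)$; equivalently it proves the statement only with $\lfloor n/2\rfloor$ in place of $\tfrac n2$. That weaker form already suffices for Theorems~\ref{thm.Gnp} and~\ref{thm.Gnm}, where $e(G)=\Theta(n^2)$. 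To recover the stated constant for odd $n$ I would treat the extreme range separately: if $x \ge (n^2-1)/4$ then $e(G) \le (n-1)^2/4$, which makes the right-hand side of the lemma at least $1$, while $\q(G) < 1$ always; and if $\lfloor n/2 \rfloor < x < (n^2-1)/4$, then the edge-moving estimate~\eqref{eqn.mono}, applied exactly as in the proof of Theorem~\ref{thm.detmod0} (the few sets $U$ with $|U|(n-|U|) \le x$ being handled separately and trivially), gives $p_G(U) \le |U|\,(n-|U|)\bigl(2x-(n-1)\bigr) - x^2 \le \tfrac{n^2-1}{4}\bigl(2x-n+1\bigr) - x^2$ for every proper $U \subset V$; combining this with the Dinh--Thai bound recalled in Section~\ref{subsec.defns} (a bipartition $\{U,\bar U\}$ with $q_{\{U,\bar U\}}(G) = 2\,\vol(G)^{-2}p_G(U) \ge \tfrac12\,\q(G)$) gives $\q(G) \le 4\,\vol(G)^{-2}\max_U p_G(U) = e(G)^{-2}\max_U p_G(U)$, and the claim follows after the routine polynomial check that $\max_U p_G(U) \le e(G)\,(2x-n)$.

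The main obstacle is exactly this odd-$n$ accounting. The heart of the proof -- add the fewest edges needed to reach a $0$-modularity graph, then apply Lemma~\ref{lem.edgeSim} once -- is a few lines and is all the applications use; but the factor-$2$ slack in Lemma~\ref{lem.edgeSim} is not sharp enough to separate $\tfrac n2$ from $\lfloor n/2\rfloor$ when $x$ barely exceeds $n/2$, so pinning down the additive $\tfrac n2$ for odd $n$ needs the explicit bound on $\max_U p_G(U)$ above rather than a soft robustness argument.
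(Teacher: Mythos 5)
Your core argument --- add the fewest edges needed to leave at most $\lfloor n/2\rfloor$ non-edges, conclude $\q(G_0)=0$ from Theorem~\ref{thm.detmod0}, and apply Lemma~\ref{lem.edgeSim} once with $G_0$ as the larger graph --- is exactly the paper's proof (it is the two sentences preceding the lemma statement), and for even $n$ your write-up is complete and correct. Your quibble about odd $n$ is legitimate and applies to the paper's own wording: $\binom{n}{2}-\tfrac n2-e(G)$ is a half-integer when $n$ is odd, so one must add $\lceil\cdot\rceil$ of that many edges, and the robustness argument then yields the bound only with $\lfloor n/2\rfloor$ in place of $\tfrac n2$, i.e.\ an extra additive $1/e(G)$. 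As you observe, this weaker form is all that is ever used (the upper bound in Theorem~\ref{thm.Gnp}(b) has $e(G)=\Theta(n^2)$), so the discrepancy is harmless in context.

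Your proposed sharpening for odd $n$, however, is not complete as sketched. The inequality $p_G(U)\le |U|(n-|U|)\bigl(2x-(n-1)\bigr)-x^2$ comes from the extremal configuration in which all $x$ missing edges lie in the cut $(U,\bar{U})$, which exists only when $x\le |U|\,(n-|U|)$. When $|U|\,(n-|U|)\le x$ the edge-moving argument~\eqref{eqn.mono} terminates instead at a configuration with $e(U,\bar{U})=0$, where $p_G(U)=\vol(U)\vol(\bar{U})$ and your displayed bound can genuinely fail (e.g.\ $n=9$, $x=19$, $U$ inducing a $K_2$ component disjoint from the rest gives $p_G(U)=64$ versus the formula's $59$). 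Those sets can still be handled --- via $p_G(U)\le\vol(U)\,\vol(G)$ together with $\vol(U)\le |U|(|U|-1)$ and $x\ge |U|(n-|U|)$ --- but that is an actual case analysis, not something dismissed as trivial; and the final "routine polynomial check" (which does hold: the difference is $x^2-\tfrac{(n^2+1)x}{2}+\tfrac{(n-1)(n^2+1)}{4}\le 0$ throughout the relevant range) should be displayed. Since the whole refinement goes beyond what the paper proves and is nowhere needed, the cleanest options are to keep only your first two paragraphs and state the lemma with $\lfloor n/2\rfloor$, or to fill in the missing case properly.
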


We will use Lemma~\ref{lem.qleq} to prove the upper bound in Theorem~\ref{thm.Gnp} (b).

\subsection{Nearly complete bipartite graphs: proof of Theorem~\ref{thm.bip_delta_all} }
\label{subsec.cmg}

We commented earlier that any complete multipartite graph $G$ satisfies $\q(G)=0$, but there is no robustness result for complete multipartite graphs corresponding to Theorem~\ref{thm.detmod0}.
In this section we prove the three parts of Theorem~\ref{thm.bip_delta_all} on $\delta^-(G)$, $\delta^+(G)$ and $\delta(G)$ for complete bipartite graphs~$G$, in  Lemmas~\ref{lem.bip_delta_minus},~\ref{lem.bip_delta_plus} and~\ref{lem.bip_delta}. In Section~\ref{subsec.multi} we briefly consider complete multipartite graphs.

We start with $\delta^-$, and see in Lemma~\ref{lem.bip_delta_minus} that $\delta^-(G)=1$ for all complete bipartite graphs $G$, with the exception of stars.

\smallskip

\begin{lemma}
[part (a) of Theorem~\ref{thm.bip_delta_all}]\label{lem.bip_delta_minus}
Let $G=K_{s,t}$ be a complete bipartite graph with $s \leq t$.  If $s=1$ then $\delta^-(G)=\infty$; and if $s \geq 2$ then $\delta^-(G)=1$.
\end{lemma}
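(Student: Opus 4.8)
The plan is to handle the two parts separately. For $s=1$ the graph $G=K_{1,t}$ is the star, and it has no two disjoint edges, so by the remark following Theorem~\ref{thm.detmod0} no subgraph $H'$ of $G$ has $\q(H')>0$; hence $\delta^-(G)=\infty$ by definition. (Concretely: any subgraph of a star is a star together with isolated vertices, and stars have modularity $0$ since every edge meets the centre — for any bipartition the centre lies in one part $A$, the other part $\bar A$ is an independent set, so $e(A,\bar A)=\vol(\bar A)\le \vol(A)$ and $p_G(\bar A)=\vol(A)\vol(\bar A)-e(A,\bar A)\vol(G)=\vol(\bar A)(\vol(A)-\vol(G))\le 0$; now apply Lemma~\ref{lem.pZero}.)

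For $s\ge 2$, since removing any single edge from $G$ still leaves at least two disjoint edges, $\delta^-(G)$ is finite, so it suffices to show $\delta^-(G)\le 1$, i.e. that deleting one suitably chosen edge produces positive modularity, and also that $\delta^-(G)\ge 1$, which is immediate since $\q(K_{s,t})=0$ (complete multipartite). The crux is the upper bound. Here I would invoke Lemma~\ref{lem.twoparts_removeedge}: I need a bipartition $\cA=\{A,\bar A\}$ of $V(G)$ with $q_\cA(G)\ge 0$ and $e_G(A,\bar A)\ge 1$, for then deleting one edge between the parts gives $G'$ with $q_\cA(G')>0$, hence $\q(G')>0$ and $\delta^-(G)=1$. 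The natural choice is to take $\cA$ to be the bipartition into the two sides of $K_{s,t}$ — call them $S$ (size $s$) and $T$ (size $t$). Then there are no edges inside the parts, so $q^E_\cA(G)=0$, while $q^D_\cA(G)>0$, giving $q_\cA(G)=-q^D_\cA(G)<0$; that bipartition does \emph{not} satisfy the hypothesis. So I would instead split \emph{along} the bipartition differently, e.g. take $A$ to consist of one vertex $u\in S$ together with all of $T$, and $\bar A$ the remaining $s-1$ vertices of $S$. Then $e_G(A)=t$ (the edges from $u$ to $T$), $e_G(\bar A)=0$, $e_G(A,\bar A)=(s-1)t\ge 1$, $m=st$, $\vol_G(A)=t+t+\cdots$; compute $\vol_G(A)=t\cdot 1+s\cdot t-\,$... — more carefully, $\vol_G(A)=\deg(u)+\sum_{v\in T}\deg(v)=t+ts=t(s+1)$ and $\vol_G(\bar A)=(s-1)t$. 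A direct computation of $q_\cA(G)=\tfrac1m e_G(A)-\tfrac{1}{4m^2}(\vol_G(A)^2+\vol_G(\bar A)^2)$ should then be checked to be $\ge 0$ for all $s\ge 2$, $t\ge s$; if this particular $A$ does not quite work, I would optimise over putting $a$ vertices of $S$ and $b$ vertices of $T$ on one side and solve for a choice making $q_\cA(G)\ge 0$ with at least one crossing edge.

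The main obstacle is exactly pinning down a bipartition of $K_{s,t}$ with nonnegative modularity score and a crossing edge: complete bipartite graphs have modularity $0$ with the optimum attained (trivially) by the all-in-one partition, and one must verify there is a \emph{nontrivial} bipartition achieving score $\ge 0$ rather than strictly negative. I expect that balancing each side so each part looks like a smaller complete bipartite piece (say split $T$ as evenly as possible between the two parts while keeping all of $S$ on one side, or vice versa) yields $q_\cA(G)=0$ or very slightly positive — this is the "no robustness" phenomenon already flagged in the text. Once such an $\cA$ is exhibited and the inequality $q_\cA(G)\ge 0$ verified by the routine quadratic-in-$t$ computation, Lemma~\ref{lem.twoparts_removeedge} finishes the argument, and combined with $\q(K_{s,t})=0$ we conclude $\delta^-(K_{s,t})=1$ for $s\ge 2$.
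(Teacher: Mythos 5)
Your handling of $s=1$ is correct and matches the paper. The gap is in the case $s\ge 2$, where your entire plan is to exhibit a nontrivial bipartition $\cA=\{A,\bar A\}$ of $K_{s,t}$ with $q_\cA(K_{s,t})\ge 0$ and at least one crossing edge, and then invoke Lemma~\ref{lem.twoparts_removeedge}. Such a bipartition exists \emph{only when} $\gcd(s,t)>1$. Indeed, writing $S,T$ for the sides and $\sigma=|A\cap S|/s$, $\tau=|A\cap T|/t$, a direct calculation (the claim~\eqref{eqn.complete_bipartite_mod} in the paper) gives $q_\cA(K_{s,t})=-\tfrac12(\sigma-\tau)^2$. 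So $q_\cA\ge 0$ forces $\sigma=\tau$, i.e.\ $t\,|A\cap S|=s\,|A\cap T|$; when $s$ and $t$ are coprime this forces $s\mid |A\cap S|$, hence $A=\emptyset$ or $A=V$. For $K_{2,3}$, say, \emph{every} genuine bipartition has strictly negative score, so the optimisation over $(a,b)$ that you defer to cannot succeed, and neither can your fallback guesses (splitting $T$ evenly, etc.), which all have $\sigma\ne\tau$. The observation that $\delta^-(G)$ is finite does not help you get $\delta^-(G)\le 1$.

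The paper therefore splits into two cases. When $\ell=\gcd(s,t)>1$ it does exactly what you propose: take $|A\cap S|=s/\ell$ and $|A\cap T|=t/\ell$, so $\sigma=\tau$, $q_\cA(K_{s,t})=0$, and Lemma~\ref{lem.twoparts_removeedge} applies. In the coprime case it needs a genuinely different argument: using a solution of the Diophantine equation $a't-b's=1$ with $1\le a'\le s-1$, it chooses $A$ with $|\sigma-\tau|=1/(st)$, so $q_\cA(K_{s,t})$ is negative but only by $\tfrac12(st)^{-2}$; since the hypothesis $q_\cA\ge 0$ of Lemma~\ref{lem.twoparts_removeedge} fails, it instead computes exactly (via~\eqref{eqn.Gminus}--\eqref{eqn.Kstminus2}) the change in score upon deleting one crossing edge and checks that the gain, which is at least $st(\sigma(1-\sigma)+\tau(1-\tau))-\tfrac12\ge t/s+s/t-\tfrac12$, strictly exceeds the deficit. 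Your proof needs this second case supplied.
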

\begin{proof}
Suppose first that $s=1$, so $G$ is the star $K_{1,t}$. As we delete edges we get stars $K_{1,t'}$ for $t'< t$ (together with isolated vertices) and eventually an empty graph. All these graphs have modularity zero and hence $\delta^-(K_{1,t})=\infty$.

From now on let $s \geq 2$. Note that $\q(K_{s,t})=0$ (shown in~\cite{bolla2015spectral, majstorovic2014note}, see also Section~\ref{sec.multi0}). Thus we must show that removing an edge from $K_{s,t}$ yields a graph with positive modularity value. (Recall by Lemma~\ref{lem.pZero} that $\q(G)>0$ if and only if there is a bipartition $\cA$ with $q_\cA(G)>0$, and hence $\q(K_{s,t})=0$ follows also from the claim on line~\eqref{eqn.complete_bipartite_mod} below.)

There are two cases. We first consider the case that $s$ and $t$ have a common factor and show that in this case there is a bipartition $\AA=\{A, \overline{A}\}$ with $q_\AA(K_{s,t})=0$ and $e(A, \overline{A})\geq 1$. Thus by Lemma~\ref{lem.twoparts_removeedge_or_addedge} part (i), for $K_{s,t}^-$ the graph obtained from $K_{s,t}$ by removing an edge between parts $A$ and $\overline{A}$ we have  $q_\AA(K_{s,t}^-)>0$. After that we consider the case when $s$ and $t$ are coprime which is only a little more complicated.

Before splitting into cases we give an expression for the modularity score of $K_{s,t}$ for bipartitions which will be useful for both cases. Write $S$, $T$ for the two parts of the bipartite graph of sizes $s$,~$t$ respectively, and let $\sigma=|A \cap S|/s$ and $\tau=|A \cap T|/t$ be the proportions of the vertex set $A$ in each of the bipartite parts. 
We \emph{claim} that for $\cA=\{ A, \overline{A} \}$
\begin{equation}\label{eqn.complete_bipartite_mod}
    q_{\cA}(K_{s,t})=-\tfrac{1}{2}(\sigma - \tau)^2.
\end{equation}

\emph{Proof of Claim.} The proof is by direct calculation. Note that 
\[e(A)=|A\cap S| \,|A\cap T|=\sigma \tau st, \] 
similarly, $e(\overline{A})=(1-\sigma)(1-\tau)st$, and thus
\begin{equation}\label{eqn.bipartite_edge_contribution}
    q^E_{\cA}(K_{s,t})=\sigma\tau+(1-\sigma)(1-\tau)=1-(\sigma+\tau)+2\sigma\tau.
\end{equation}
Also each vertex in part $S$ (resp. $T$) has degree $t$ (resp. $s$) and so 
\[\vol(A)=|A\cap S|t+|A\cap T|s=(\sigma+\tau)st\,, \] 
which together with the corresponding expression for $\vol(\overline{A})$ yields 
\[
q^D_{\cA}(K_{s,t})=(\tfrac{1}{2}(\sigma+\tau))^2+(1-\tfrac{1}{2}(\sigma+\tau))^2=1-(\sigma+\tau)+\tfrac{1}{2}(\sigma+\tau)^2.
\]
Thus we have
\[q_{\cA}(K_{s,t})=2\sigma\tau-\tfrac{1}{2}(\sigma+\tau)^2=-\tfrac{1}{2}(\sigma-\tau)^2\]
which completes the proof of the claim. 

\bigskip

\needspace{6\baselineskip}
\textbf{Case 1: $s$ and $t$ have a common factor.}\\
Recall that by Lemma~\ref{lem.twoparts_removeedge_or_addedge} part (i), it is sufficient to find a bipartition $\cA=\{A,\overline{A}\}$ such that $e(A, \overline{A}) \geq 1$ and $q_\cA(K_{s,t})=0$. Let $\ell>1$ be a common factor of $s$ and $t$. Then take $A$ to have $|S|/\ell$ vertices from $S$ and $|T|/\ell$ vertices from $T$. Clearly $e(A, \overline{A})\geq 1$; and since $\sigma=|A\cap S|/s=1/\ell = |A\cap T|/t = \tau$, by the claim on line~\eqref{eqn.complete_bipartite_mod} we have $q_\cA(K_{s,t})=0$ and we are done.

\bigskip

\textbf{Case 2: $s$ and $t$ are co-prime.}\\
The following relation between graphs $G$, $\Gminus$ and the modularities will be useful. Let $\AA=\{A, \overline{A}\}$ be a vertex bipartition of $G$ such that $e(A, \overline{A})\geq 1$ and let $\Gminus$ be any graph formed from $G$ by removing an edge between the parts $A$ and $\overline{A}$. Then
\begin{equation}\label{eqn.Gminus}
    e(\Gminus)^2q_\AA(\Gminus) = e(G)^2 q_\AA(G) - e(G) q^E_\AA(G)+e(G)-1/2.
\end{equation}
To see this note that because the removed edge is between parts in $\AA$, $e(\Gminus) q_\AA^E(\Gminus)=e(G)q_\AA^E(G)$. Similarly one may check that $e(\Gminus)^2q_\AA^D(\Gminus)=e(G)^2q_\AA^D(G)-e(G)+1/2$ and that these together give~\eqref{eqn.Gminus}.

Applied to our graph $G=K_{s,t}$ and $G^-= K_{s,t}^-$ obtained by removing an edge from between parts $A$ and~$\overline{A}$, we have
\begin{equation}\label{eqn.Kstminus}
    (st-1)^2q_\AA(K_{s,t}^-) = s^2t^2 \cdot q_\AA(K_{s,t}) - st\cdot (q^E_\AA(K_{s,t})-1)-1/2.
\end{equation}
Note from \eqref{eqn.bipartite_edge_contribution}
\begin{equation*}
    q^E_{\cA}(K_{s,t})-1
    =-(\sigma+\tau)+2\sigma\tau
    =-(\sigma-\tau)^2-\sigma(1-\sigma)-\tau(1-\tau).
\end{equation*}
Hence, substituting this and the expression for the modularity score in~\eqref{eqn.complete_bipartite_mod} we have
\begin{equation}\label{eqn.Kstminus2}
    (st-1)^2q_\AA(K_{s,t}^-) 
    = - st(\sigma-\tau)^2(\tfrac{1}{2}st - 1)
      + st\big(\sigma(1-\sigma)+\tau(1-\tau)\big)   
    -1/2.
\end{equation}

Observe that since $s$ and $t$ are co-prime the linear diophantine equation $at - bs=1$ has solutions $(a,b)$ with $a$ and $b$ integers. Note also that if $(a,b)$ is a solution then $(a+s, b+t)$ is also a solution. Hence there is a solution $(a',b')$ with $1 \leq a' \leq s-1$ (we cannot have $a'=0$). 
Thus $b's=a't-1$ satisfies $t-1 \leq b's \leq (s-1)t -1$, and so $1 \leq b' \leq t-1$. 

Our construction is to take $A$ with $a'$ vertices in~$S$ and $b'$ vertices in~$T$ then 
\begin{equation}\label{eqn.munuclose} |\sigma-\tau| = \left| \frac{|A \cap S|}{s} - \frac{|A \cap T|}{t} \right| = \bigg|\frac{a'}{s} - \frac{b'}{t}\bigg| = \frac{1}{st}.\end{equation}
Since $1\leq  a'\leq s-1$, $\sigma$ satisfies $1/s\leq \sigma \leq 1-1/s$ and hence $\sigma(1-\sigma)\geq 1/s^2$. Similarly $\tau(1-\tau)\geq 1/t^2$.

Hence by \eqref{eqn.Kstminus2}, substituting $|\sigma-\tau| = 1/(st)$, $\sigma(1-\sigma)\geq 1/s^2$ and $\tau(1-\tau)\geq 1/t^2$ we get 
\begin{equation}\label{eqn.Kstminus3}
    (st-1)^2q_\AA(K_{s,t}^-) 
    \geq \frac{1}{st}
      + \frac{t}{s} + \frac{s}{t} -1 >0
\end{equation}
and we are done.
\end{proof}

\smallskip

\needspace{4\baselineskip}
\begin{lemma}[part (b) of Theorem~\ref{thm.bip_delta_all}] \label{lem.bip_delta_plus}
Let $G$ be a complete bipartite graph $K_{s,t}$ with $s \leq t$. 
If $s=1$ and $t \geq 4$, or if $s \geq 2$ and $t \geq 3$, then $\delta^+(G)=1$.  In the other cases (namely when $(s,t) = (1,1), (1,2), (1,3)$ or $(2,2)$), we have $\delta^+(G)=\infty$.
\end{lemma}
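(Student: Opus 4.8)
\textbf{Proof plan for Lemma~\ref{lem.bip_delta_plus}.}

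The plan is to split into the same two regimes as the statement: the small cases $(s,t) \in \{(1,1),(1,2),(1,3),(2,2)\}$, where I must show $\delta^+(G) = \infty$, and the main range, where I exhibit a single edge addition producing positive modularity. For the small cases I would simply enumerate: adding edges to $K_{1,1}$, $K_{1,2}$ or $K_{1,3}$ can only produce graphs on at most $3$ or $4$ vertices, and I would check directly (or invoke Lemma~\ref{lem.pZero} by testing all bipartitions $U$) that none of the attainable graphs has $p_G(U) > 0$; the relevant attainable graphs on $3$ vertices all have modularity zero, and on $4$ vertices from $K_{1,3}$ the only supergraphs are $K_{1,3}$ plus one, two or three extra edges, each of which I check has $\q = 0$. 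For $(2,2)$: $K_{2,2}$ has $4$ vertices, and adding edges within parts yields either $K_{2,2}$ plus one edge (a graph containing a triangle plus a pendant structure) or $K_4$ minus an edge or $K_4$; one checks $p_G(U) \le 0$ for all $U$ in each case — this is a short finite verification.

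For the main range I would add one edge inside the larger part $T$ (of size $t \ge 3$, in both sub-cases $s=1, t \ge 4$ and $s \ge 2, t \ge 3$), and analyse the resulting graph $G^+$ via a carefully chosen bipartition. The natural candidate partition $\cA = \{A, \bar A\}$ puts into $A$ exactly the two endpoints of the new edge (so $e(A) = 1$ and $A$ captures the new edge), or more generally a block $A \cap T$ of vertices of $T$ containing both endpoints together with a matching fraction of $S$; I would compute $q^E_{\cA}(G^+)$ and $q^D_{\cA}(G^+)$ using the same volume bookkeeping as in the proof of Lemma~\ref{lem.bip_delta_minus} (every vertex of $S$ has degree $t$ in $G^+$, the two special vertices of $T$ have degree $s+1$, the rest have degree $s$), and show the edge contribution exceeds the degree tax. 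Alternatively, and perhaps more cleanly, I would use the dual relation analogous to~\eqref{eqn.Gminus}: adding an edge \emph{within} $A$ changes $e(G)^2 q_{\cA}$ by a controlled amount, so starting from a bipartition of $K_{s,t}$ with $q_{\cA}(K_{s,t}) = 0$ (taking $A$ to contain, say, two vertices of $T$ and matching up $\sigma$ with $\tau$ as closely as the co-primality argument of Case~2 of Lemma~\ref{lem.bip_delta_minus} allows, giving $|\sigma - \tau| \le 1/(st)$), adding an edge inside $A \cap T$ strictly increases $q_{\cA}$, and I bound the increase from below.

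The main obstacle is the same arithmetic subtlety that appeared in Lemma~\ref{lem.bip_delta_minus}: I cannot in general find a bipartition of $K_{s,t}$ with modularity score exactly zero unless $\gcd(s,t) > 1$, so in the co-prime case I must start from a bipartition with modularity score slightly negative (of order $-1/(st)^2$) and verify that the gain from adding one interior edge — which I expect to be of order $1/(st)$ — dominates this deficit, after accounting for the change in the normalising factor $e(G) \to e(G)+1$. Concretely I expect an inequality of the shape $(st+1)^2 q_{\cA}(G^+) \ge -st \cdot (\sigma-\tau)^2(\tfrac12 st - 1) - \tfrac12 st\big(\sigma(1-\sigma)+\tau(1-\tau)\big) + (\text{positive terms of order } st)$ to close, exactly parallel to~\eqref{eqn.Kstminus3}, provided $t \ge 3$; isolating the few $(s,t)$ for which this fails and checking they are precisely $(1,1),(1,2),(1,3),(2,2)$ is the delicate bookkeeping step. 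The case $s = 1$ needs separate handling since $K_{1,t}$ has only trivial internal structure in $S$: here adding an edge in $T$ to $K_{1,t}$ gives a graph where $\{$the two new-edge endpoints$\}$ versus the rest is easily checked to have positive score once $t \ge 4$ (and fails for $t \le 3$), which recovers the stated threshold.
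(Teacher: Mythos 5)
Your plan follows essentially the same route as the paper: the small cases reduce to supergraphs of $K_4$ with at most two edges missing (so Theorem~\ref{thm.detmod0} applies), the star case $s=1$, $t\ge 4$ uses exactly your two-endpoint partition, giving $(t+1)^2 q_\cA(G^+)=2t-6$, and for $s\ge 2$, $t\ge 3$ the paper uses the same gcd/coprime dichotomy and Diophantine construction as in Lemma~\ref{lem.bip_delta_minus} together with the edge-addition identity $e(G^+)^2 q_\cA(G^+)=e(G)^2 q_\cA(G)+2e_G(\bar A)$, which is cleaner than the inequality shape you anticipate: $|\sigma-\tau|\le 1/(st)$ gives $e(G)^2 q_\cA(G)\ge -\tfrac12$ while $e_G(\bar A)\ge 1$, so the bookkeeping closes immediately. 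The only care needed, which you correctly flag, is choosing $A$ (replacing it by its complement if necessary) so that it has room for the new edge while $\bar A$ still meets both sides and hence contains an edge.
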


\begin{proof}
Let $s=1$ and form $G^+$ by adding an edge to the star $K_{1,t}$. If $t$ is 1 or 2 then $\q(G^+)=0$ since $G^+$ has at most 3 vertices. If $t=3$ then $G^+$ is $K_4$ less 2 edges, so $\q(G^+)=0$ by Theorem~\ref{thm.detmod0}.
But for all $t \geq 4$
\begin{equation} \label{eqn.add_edge_to_star}
 \q(G^+) >0\,. 
\end{equation}
To see this, suppose that the added edge $e$ is $\{a,b\}$, and let $\cA$ be the partition of the vertex set $V$ into $\{a,b\}$ and $V \setminus \{a,b\}$.  Then 
\[q_\cA^E(G^+) = \frac{t-1}{t+1}\]
and
\[q_\cA^D(G^+)= \frac{4^2+(2t-2)^2}{4(t+1)^2} = \frac{4+(t-1)^2}{(t+1)^2}\,.\]
Thus
\[(t+1)^2 q_\cA(G^+) = (t^2 -1) -4-(t^2-2t+1) = 2t-6\,:\]
so $q_\cA(G^+)>0$ for $t \geq 4$. This completes the proof of~(\ref{eqn.add_edge_to_star}), and thus of the case $s=1$.

Suppose now that $s \geq 2$.  Note first that if $s=t=2$ (so $G$ is $K_{2,2}$) then at most 2 edges are missing from $K_4$, and thus $\delta^+(G)=\infty$ by Theorem~\ref{thm.detmod0}.   Now assume also that $t \geq 3$. It remains to show that $\delta^+(G)=1$.

The following relation between graphs $G$, $\Gplus$ and the modularity scores will be useful. Let $\AA=\{A, \overline{A}\}$ be a vertex bipartition of $G$ such that $e(A)<|A|(|A|-1)/2$ and let $\Gplus$ be any graph formed from $G$ by adding an edge within the part $A$. 
Since the extra edge is added inside a part in $\AA$, $e(\Gplus) q_\AA^E(\Gplus)=e(G)q_\AA^E(G)+1$. For the degree tax we may calculate
\begin{equation*}
 e(\Gplus)^2q_\AA^D(\Gplus)
 =\tfrac{1}{4}(\vol_{G}(A)+2)^2+\tfrac{1}{4}\vol_{G}(\overline{A})^2
 =e(G)^2q_\AA^D(G) + \vol_{G}(A)+1\,.
 \end{equation*}
Thus we get an expression similar to \eqref{eqn.Gminus}, 
\begin{eqnarray}
\notag    e(\Gplus)^2q_\AA(\Gplus) 
    & = & e(\Gplus)^2q^E_\AA(\Gplus) - e(\Gplus)^2q^D_\AA(\Gplus) \\
\notag    & = & (e(G)+1)(e(G)q^E_\AA(G)+1) - e(G)^2q_\AA^D(G) - \vol_{G}(A)-1 \\
\label{eqn.Gplus}    & = & e(G)^2q_\AA(G) + e(G) + e(G)q^E_\AA(G) - \vol_{G}(A).
\end{eqnarray}

Note that making the following substitutions for total edges : $e(G)=e_G(A)+e_G(A, \overline{A})+e_G(\overline{A})$, for internal edges : $\;e(G)q^E_\cA(G)=e_G(A)+e_G(\overline{A})$ and for volume of $A$ : $\vol_G(A)=2e_G(A)+e_G(A, \overline{A})$ we get
\begin{equation}\label{eqn.Gplus_fromzero} 
e(G^+)^2q_{\cA}(G^+) = e(G)^2 q_{\cA}(G)+ 2e_G(\overline{A}).
\end{equation}

\smallskip
\needspace{1\baselineskip}
\textbf{Case 1: $s$ and $t$ have a common factor.}\\
Let $\ell>1$ be a common factor of $s$ and $t$. Take $\overline{A}$ to be the part with $|S|/\ell$ vertices from $S$ and $|T|/\ell$ vertices from $T$ (we want to add an edge within the vertex set $A$, so we take $|A|\geq |\overline{A}|$). Then $|A \cap T|\geq 2$ since $t \geq 3$, and thus we may add an edge within part $A$ - call this new graph $K_{s,t}^+$. 

Similarly to in the proof of Lemma~\ref{lem.bip_delta_minus}, $\sigma=|A\cap S|/s=1-1/\ell = |A\cap T|/t = \tau$ and we have $q_{\cA}(K_{s,t})=0$ for the bipartition $\cA=\{A, \overline{A}\}$. Thus by \eqref{eqn.Gplus_fromzero}
\[ 
e(K_{s,t}^+)^2q_{\cA}(K_{s,t}^+) = 2e_{K_{s,t}}(\overline{A}).
\]
But now, since $\overline{A}$ has non-empty intersection with both $S$ and with $T$ we have $e_{K_{s,t}}(\overline{A})\geq 1$ and thus $q_{\cA}(K_{s,t}^+)>0$ as required.

\bigskip

\needspace{6\baselineskip}
\textbf{Case 2: $s$ and $t$ are co-prime.}\\
As in the proof of Lemma~\ref{lem.bip_delta_minus} we may assume that $a't-b's=1$ has a solution with $1\leq a'\leq s-1$ and $1\leq b'\leq t-1$ positive integers. Now, if either $a'>1$ or $b'>1$ then take $A$ with $a'$ vertices in~$S$ and $b'$ vertices in~$T$, otherwise take $A$ to be the complement of that, i.e. with $s-a'$ vertices in~$S$ and $t-b'$ vertices in~$T$. Let $\sigma=|A\cap S|/s$ and $\tau=|A \cap T|/t$. In either case, as in \eqref{eqn.munuclose}, \begin{equation}\label{eqn.munusmall_again} |\sigma - \tau|=1/(st).\end{equation} Notice also by our choice of $A$ that the set $A$ intersects either $S$ or $T$ in at least two vertices, and hence we have a place to add an edge within the set $A$, again we call the resulting graph $K_{s,t}^+$.

Now by~\eqref{eqn.Gplus_fromzero} for bipartition $\cA=\{A, \overline{A}\}$
\[
 e(K_{s,t}^+)^2q_{\cA}(K_{s,t}^+)
 =  e(K_{s,t})^2q_{\cA}(K_{s,t}) + e_{K_{s,t}}(\overline{A}) = -\tfrac{1}{2} +  e_{K_{s,t}}(\overline{A}) 
 \]
 where the second equality follows because by the claim in~\eqref{eqn.complete_bipartite_mod} $q_{\cA}(K_{s,t})=-\frac{1}{2}(\sigma-\tau)^2$, by~\eqref{eqn.munusmall_again} and because the total number of edges is $e(K_{s,t})=st$.
Again, by construction $\overline{A}$ has non-empty intersection both with $S$ and with $T$ and hence $e_{K_{s,t}}(\overline{A})\geq 1$, and $q_{\cA}(K_{s,t}^+) >0$ as required. 
\end{proof}
\smallskip

The above results show that for a complete bipartite graph $G$,
always $\delta^-(G)$ and $\delta^+(G)$ are 1 or~$\infty$: that is not quite the case for $\delta(G)$.

\smallskip

\begin{lemma}[part (c) of Theorem~\ref{thm.bip_delta_all}] \label{lem.bip_delta}
Let $G$ be a complete bipartite graph $K_{s,t}$ with $s \leq t$. 
If $s=1$ and $t \geq 4$, or if $s \geq 2$, then $\delta(G)=1$. 
 If $(s,t) = (1,1)$ or $ (1,2)$ then $\delta(G)=\infty$, and in the remaining case $(s,t)=(1,3)$ we have $\delta(G)=2$.
\end{lemma}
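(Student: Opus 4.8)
The plan is to assemble the result for $\delta(G)$ from the already-established values of $\delta^-(G)$ and $\delta^+(G)$, handling the small exceptional cases by hand. Since $\delta(G) \leq \min\{\delta^-(G), \delta^+(G)\}$ always holds (any deletion-only or addition-only sequence of edits is a valid mixed sequence), the values $\delta^-$ and $\delta^+$ from Lemmas~\ref{lem.bip_delta_minus} and~\ref{lem.bip_delta_plus} immediately give the upper bound $\delta(G)\leq 1$ whenever either of them equals $1$. So for $s\geq 2$ we get $\delta(G)\leq \delta^-(G)=1$, and since $\q(K_{s,t})=0$ we conclude $\delta(G)=1$; and for $s=1, t\geq 4$ we get $\delta(G)\leq \delta^+(G)=1$, and again $\delta(K_{1,t})=0$ forces $\delta(G)=1$. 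This disposes of the generic cases almost for free.

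The remaining work is the three small cases $(s,t)\in\{(1,1),(1,2),(1,3)\}$. For $(1,1)$ and $(1,2)$ the graph has at most $3$ vertices, so every graph obtained by any number of edits still has at most $3$ vertices and hence modularity $0$; thus $\delta(G)=\infty$. For $(1,3)$: first, $\delta^-(K_{1,3})=\infty$ (it is a star, by Lemma~\ref{lem.bip_delta_minus}) and $\delta^+(K_{1,3})=\infty$ (by Lemma~\ref{lem.bip_delta_plus}), so a single edit of either pure type fails, but we must check that no \emph{mixed} single edit works — but a single edit is by definition either one addition or one deletion, so $\delta(K_{1,3})\geq 2$. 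To show $\delta(K_{1,3})=2$ we exhibit two edits producing a $4$-vertex graph with positive modularity: for instance, delete one edge of $K_{1,3}$ and add one edge between the two resulting degree-one vertices; this yields (up to isolated vertices and relabelling) the path $P_3$ plus a pendant, i.e. a graph on $4$ vertices with a vertex of degree $1$, which one checks has $\q>0$. Alternatively, deleting one spoke and adding an edge between two leaves gives a triangle with a pendant edge, or a path $P_4$ — and the $4$-vertex path has modularity $\tfrac16>0$ as already noted in the introduction, so one concrete choice is: from $K_{1,3}$ with center $c$ and leaves $x,y,z$, delete $cz$ and add $xz$ (or $yz$), giving a path $c$–$y$? — more carefully, pick the two edits so the result is $P_4$ and quote $\q(P_4)=\tfrac16$.

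I expect the only mild obstacle to be bookkeeping in the $(1,3)$ case: making sure the two-edit target graph is correctly identified and that it genuinely has positive modularity (and that no single edit could have worked, which is immediate from the definition of a single edit being one addition or one deletion together with the already-proved $\delta^{\pm}(K_{1,3})=\infty$). A clean way to present the upper bound $\delta(K_{1,3})\leq 2$ is to note that deleting one edge from $K_{1,3}$ and then adding an edge joining two of the degree-$\leq 1$ vertices can be arranged to produce the $4$-vertex path, whose modularity is $\tfrac16$ by the computation recalled in Section~\ref{sec.intro}. Everything else is an immediate consequence of Lemmas~\ref{lem.bip_delta_minus} and~\ref{lem.bip_delta_plus} together with the trivial inequality $\delta\leq\min\{\delta^-,\delta^+\}$ and the fact that graphs on at most $3$ vertices have modularity $0$.
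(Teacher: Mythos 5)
Your proof is correct and follows essentially the same route as the paper: use $\delta(G)\leq\min\{\delta^-(G),\delta^+(G)\}$ together with Lemmas~\ref{lem.bip_delta_minus} and~\ref{lem.bip_delta_plus} for the generic cases, dismiss the $\leq 3$-vertex cases, and for $K_{1,3}$ combine $\delta^-(K_{1,3})=\delta^+(K_{1,3})=\infty$ (so no single edit works and $\delta\geq 2$) with two edits producing the $4$-vertex path of modularity $\tfrac16$. Your choice to invoke $\delta^-(G)=1$ rather than $\delta^+(G)$ when $s\geq 2$ is in fact slightly more careful than the paper's wording, since $\delta^+(K_{2,2})=\infty$.
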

\begin{proof}
 If $s=1$ and $t \geq 4$, or if $s \geq 2$, then $1 \leq \delta(G) \leq \delta^+(G)=1$ by Lemma~\ref{lem.bip_delta_plus}; and if $s+t \leq 3$ then $\delta(G)=\infty$.  This leaves only the case $(s,t)=(1,3)$. 
When $G=K_{1,3}$ we saw that $\delta^-(G)=\delta^+(G)= \infty$, and so $\delta(G) \geq 2$.  But if $H$ is the 4-vertex path then $\q(H)= \frac16 >0$, and so $\delta(G)=2$. 
\end{proof}

\subsection{Nearly complete multipartite graphs - a partial story}
\label{subsec.multi}

We start this section by showing that complete multipartite graphs $G$ whose part sizes have a non-trivial common divisor have $\delta^-(G) = \delta^+(G)=\delta(G) =1$\,; though
some other complete multipartite graphs $G$ have~$\delta^+(G)=\infty$, see Example~\ref{ex.K2222}. Finally we give an example of a complete tripartite graph~$G$ with $\delta^+(G)=2$, see Example~\ref{ex.K133}. We do not aim here to give a complete story but rather give a few partial results and examples which indicate that for complete multipartite graphs the results will be a little more involved than for complete bipartite graphs.

\begin{prop} \label{prop.multi}
Let $G=K_{s_1, \ldots, s_k}$ be a complete multipartite graph where $k\geq 2$; and let $\ell | s_i$ for each $i$ for some integer $\ell \geq 2$. Then $\delta^-(G)= \delta^+(G)= \delta(G) =1$, except that $\delta^+(G)=\infty$ if each~$s_i=2$.  
\end{prop}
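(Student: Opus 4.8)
The plan is to mimic the Case 1 argument already used for complete bipartite graphs in Lemma~\ref{lem.bip_delta_minus}. Since $\ell \mid s_i$ for every $i$, I can choose a vertex set $A$ consisting of exactly $s_i/\ell$ vertices from the $i$-th part for each $i$; its complement $\bar A$ then contains $(1-1/\ell)s_i$ vertices of each part. The key is to verify that the bipartition $\cA = \{A, \bar A\}$ has modularity score $q_\cA(G) = 0$, and that $e_G(A,\bar A) \geq 1$, so that Lemma~\ref{lem.twoparts_removeedge} applies: removing one edge between the parts yields a graph with $q_\cA(G')>0$, hence positive modularity, giving $\delta^-(G) \leq 1$; combined with $\q(G)=0$ for complete multipartite graphs this gives $\delta^-(G)=1$.

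First I would set up notation: let $\nu = \vol(G) = 2e(G)$, and recall $q_\cA(G) = \nu^{-2}(p_G(A) + p_G(\bar A)) = 2\nu^{-2} p_G(A)$ where $p_G(A) = \vol(A)\vol(\bar A) - e(A,\bar A)\,\nu$. So it suffices to show $p_G(A) = 0$, i.e. $\vol(A)\vol(\bar A) = e(A,\bar A)\,\nu$. The clean way is a scaling/symmetry observation: because $A$ takes the fraction $1/\ell$ of every part and $\bar A$ takes the fraction $1-1/\ell$ of every part, every count scales predictably. Writing $n = \sum_i s_i$ and $e = e(G)$, the degree of a vertex in part $i$ is $n - s_i$, so $\vol(A) = \sum_i (s_i/\ell)(n-s_i) = \nu/\ell$ and $\vol(\bar A) = \nu(1-1/\ell)$. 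For the cross edges, $e(A,\bar A) = \sum_{i \neq j} |A \cap V_i|\,|\bar A \cap V_j| = \sum_{i\neq j}(s_i/\ell)(s_j(1-1/\ell)) = \tfrac1\ell(1-\tfrac1\ell)\sum_{i\neq j}s_i s_j = \tfrac1\ell(1-\tfrac1\ell)\cdot 2e$. Then $\vol(A)\vol(\bar A) = \tfrac1\ell(1-\tfrac1\ell)\nu^2 = \tfrac1\ell(1-\tfrac1\ell)(2e)^2 = e(A,\bar A)\cdot 2e = e(A,\bar A)\,\nu$, so indeed $p_G(A)=0$ and $q_\cA(G)=0$.

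Next I would check $e_G(A,\bar A) \geq 1$: since $k \geq 2$ and each $s_i \geq \ell \geq 2$, both $A$ and $\bar A$ are non-empty, and in fact $A$ meets at least two distinct parts (as does $\bar A$), so there is an edge between them — alternatively just note $e(A,\bar A) = \tfrac1\ell(1-\tfrac1\ell)\cdot 2e > 0$ since $e > 0$. With $q_\cA(G) = 0$ and $e_G(A,\bar A) \geq 1$, Lemma~\ref{lem.twoparts_removeedge} gives that deleting a single cross edge produces $G'$ with $q_\cA(G') > 0$, so $\delta^-(G) \leq 1$; and $\delta^-(G) \geq 1$ because $\q(G) = 0$. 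Hence $\delta^-(G) = 1$.

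I expect no serious obstacle here — the argument is essentially the bipartite Case~1 with an extra index. The only point requiring a little care is the bookkeeping in the cross-edge count: one must correctly sum over ordered (or unordered) pairs of distinct parts and track the factors of $1/\ell$ and $1-1/\ell$; but this is the routine computation sketched above and not a genuine difficulty. It is also worth remarking explicitly (as the surrounding text does) that the converse direction fails in general — one cannot always achieve $\delta^+(G)=1$ — which is why the proposition is stated only for $\delta^-$.
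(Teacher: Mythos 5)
Your proof is correct and follows essentially the same route as the paper: the same bipartition taking a $1/\ell$ fraction of each part, verification that $q_\cA(G)=0$, and an application of Lemma~\ref{lem.twoparts_removeedge}. The only (cosmetic) difference is that you verify $q_\cA(G)=0$ via the identity $p_G(A)=\vol(A)\vol(\bar A)-e(A,\bar A)\,\nu$, whereas the paper computes the edge contribution and degree tax separately and observes they coincide.
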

\begin{proof}
Denote the parts of the complete multipartite graph by $S_1, \ldots, S_k$. Construct the vertex set $A$ by including $|S_i|/\ell$ vertices from $S_i$ for each $i$.
We prove first that for the bipartition $\cA=\{A, \overline{A}\}$ we have $q_\cA(G) = 0$. %
Note that \[ e_G(A)=\sum_{i < j} \frac{|S_i|}{\ell}\frac{|S_j|}{\ell} = \frac{1}{\ell^2}e(G);\]
that is, part $A$ contains a $1/\ell^2$ proportion of the edges of $G$.  Similarly, part $\overline{A}$ contains a $(\ell-1)^2/\ell^2$ proportion of the edges; and thus $q_\cA^E(G)=(1+(\ell-1)^2)/\ell^2$. 
Also, part $A$ has a $1/\ell$ proportion of the volume of the graph, and part $\overline{A}$ has a $(\ell-1)/\ell$ proportion; and so $q^D_\cA(G)=q^E_\cA(G)$. Hence we have $q_\cA(G)=0$, as required.

Recall that $\q(G)=0$, by~\cite{bolla2015spectral,trajanovski2012maximum}.  Since $e_G(A, \overline{A})\geq 1$,
we can form $G^-$ by removing an edge between $A$ and $\overline{A}$; and by Lemma~\ref{lem.twoparts_removeedge_or_addedge} part (i), $\q(G^-)>0$ and so $\delta^-(G)=1$. It follows that $\delta(G)=1$.

Now consider $\delta^+(G)$.  
Observe that $e_G(A) \geq 1$.  Suppose first that some $s_i \neq 2$: then $|\overline{A} \cap S_i| \geq s_i/2>1$, so we can form $G^+$ by adding an edge within $\overline{A}$.
Now by Lemma~\ref{lem.twoparts_removeedge_or_addedge} part (ii), $\q(G^+)>0$ and so $\delta^+(G)=1$. 
On the other hand, if each $s_i=2$ and $n = \sum_i s_i = 2k$ then $G$ is $K_n$ less $n/2$ edges, so $\delta^+(G)= \infty$ by Theorem~\ref{thm.detmod0}. 
\end{proof}

\smallskip

\begin{example}\label{ex.K2222}
Let $j, k \geq 0$ with $j+k \geq 1$, and let $G=G_{j,k}$ be the complete multipartite graph  $K_{2,\ldots, 2, 1, \ldots, 1}$ with $j$ partite sets of size 2 and $k$ of size 1 (and so with $n=2j+k$ vertices), and note that $\q(G)=0$. 
We shall show that $\delta^+(G)=\infty$,
and $\delta(G)=\delta^-(G)= \lfloor k/2 \rfloor +1$ if $n \geq 4$. 

Consider $\delta^+(G)$. If $j=0$ it is not possible to add an edge. If $j\geq 1$ we may add $\ell=j$ edges to yield the complete graph $K_{2j+k}$. If $j\geq 2$ we may also add $\ell < j$ edges which yields the complete multipartite graph $G_{j-l,k+2\ell}$. 
However, all such graphs have modularity zero, and so $\delta^+(G)=\infty$ as claimed. 

Now let $n =2j+k \geq 4$ and consider $\delta^-(G)$. Note that $G$ is the complete graph $K_{n}$ less the bipartite graph formed by $j$ disjoint edges, and $j \leq n/2$.  Hence by Proposition~\ref{prop.detmod0b} we have 
$\delta^-(G) = \lfloor n/2 \rfloor +1-j = \lfloor k/2 \rfloor +1$ if $n \geq 5$, and the case $n=4$ is easily checked.
\end{example}

\smallskip

\begin{figure}[t]
    \centering
    \includegraphics[scale=0.8]{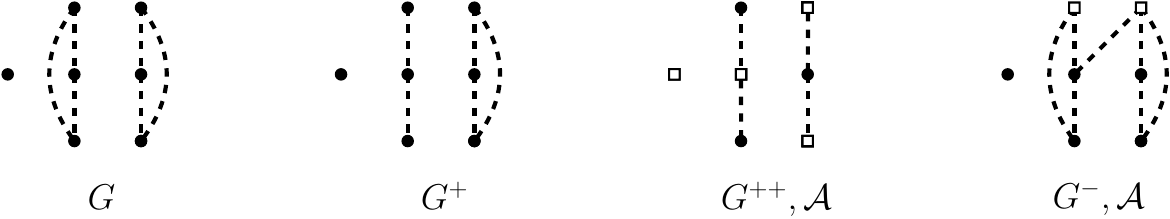}
    \caption{An illustration of the graphs considered in Example~\ref{ex.K133}, the dotted lines indicate the non-edges in the graph. For $G^{++}$ and $G^{-}$ the vertices are depicted with dots and squares to illustrate a bipartition which achieve positive modularity. }\label{fig.K133}
\end{figure}

\begin{example}\label{ex.K133}
Let $G$ be the complete tripartite graph $K_{1,3,3}$. We shall see that $\delta^+(G)=2$ and $\delta(G)=\delta^-(G)=1$. 

Note that $\q(G)=0$ and thus $\delta^+(G), \delta^-(G) \geq 1$. 
    
We first show that $\delta^+(G) \geq 2$ and then that $\delta^+(G)=2$. All graphs obtained by adding a single edge to $G$ are isomorphic: denote this 16-edge graph by $G^+$. Observe that $G^+$ is the complete graph $K_7$ less a disjoint triangle and two-edge path: a total of five missing edges. By Lemma~\ref{lem.pZero}, to show that $\q(G^+)=0$ it is enough to consider an arbitrary bipartition $\cA = \{A, \overline{A}\}$ for $G^+$, and show that $q_\cA(G^+)\leq 0$. Without loss of generality, $|A|\leq |\overline{A}|$.
Note that for every graph $H$ and every bipartition $\cB$ with a part of size 1, we have $q_{\cB}(H) \leq 0$.  Thus we may assume that $|A| \geq 2$, so $|A|$ is 2 or 3.

Of the five edges missing in $G^+$ at most four can be between the parts $A$ and $\overline{A}$, and thus $e_{G^+}(A, \overline{A})\geq |A||\overline{A}|-4$. Firstly, if $|A|=3$ and $|\overline{A}|=4$ then $e_{G^+}(A, \overline{A})\geq |A||\overline{A}|-4=8$. Hence $q_\cA^E(G^+)\leq 8/16=1/2$, and since the degree tax of any bipartition is always at least~$1/2$, in this case $q_\cA(G^+)\leq 0$.  

To show that $\delta^+(G) \geq 2$, it remains to check the case $|A|=2$ and $|\overline{A}|=5$, when $e_{G^+}(A, \overline{A})\geq 6$. If $e_{G^+}(A, \overline{A}) \geq 8$ then as in the case $|A|=3$ above we have $q_\cA(G^+) \leq 0$. If $e_{G^+}(A, \overline{A}) =7$ then $\vol_{G^+}(A)=2e_{G^+}(A)+e_{G^+}(A, \overline{A})\leq 2+7=9$. The degree tax is minimised by having volumes as equal as possible, so $q^D_\cA(G^+)\geq (9^2+23^2)/32^2$. But $q^E_\cA(G^+) = 1-7/16=9/16$ so $q_\cA(G^+)\leq 9/16-305/512<0$.  Finally, suppose that $e_{G^+}(A, \overline{A}) =6$.  Then arguing as before we see that $\vol_{G^+}(A) \leq 2+6=8$, so $q^D_\cA(G^+)\geq (8^2+24^2)/32^2     =5/8$.  But  $q^E_\cA(G^+) = 1- 6/16 = 5/8$, and so $q_\cA(G^+)=0$.

We have now seen that $\delta^+ (G) \geq 2$. To show that $\delta^+ (G)=2$, let $G^{++}$ be the 17 edge graph obtained from $K_{1,3,3}$ by adding two disjoint edges. Note that $G^{++}$  is the complete graph less two disjoint two-edge paths.
Let $\cA$ be a bipartition with part sizes $3$ and $4$ such that all four missing edges are between the parts, so $e_{G^{++}}(A, \overline{A}) = 8$. Then $q^E_\cA(G^{++}) =9/17$ and the degree tax is $q^D_\cA(G^{++})=(14^2+20^2)/34^2$ which gives $q_\cA(G^{++})>0$ and thus $\delta^+(G)=2$.

Finally, we show that $\delta^-(G)=1$ and thus $\delta(G)=1$.
Form $G^-$ from $G$ by deleting an edge, $ij$ say, between the partite sets of size~3. Then let $A=\{i,h\}$ where $h\neq j$ is in the same partite set as $j$. Note that the 7 `missing' edges in $G^-$ form two $K_3$'s joined by an edge, and 5 of these `missing' edges are between $A$ and $\overline{A}$ so there are 5 edges in $G^-$ between $A$ and $\overline{A}$. Thus for the bipartition $\cA=\{A, \overline{A}\}$ we have $q^E_\cA(G^-)=1-5/14=9/14$. 
For the degree tax of $\cA$, note  that $d_i=3$ and $d_h=4$ so $\vol(A)=7$, $\vol(\overline{A})=21$ and thus $q^D_{\cA}(G^-)=(7^2+21^2)/28^2=(1^2+3^2)/4^2=5/8$. Hence $q_\cA(G^-)=9/14-5/8=1/56 >0$, and so we have $\delta^-(G)=1$ as required. 
\end{example}


\needspace{8\baselineskip}
\section{Short proof that complete multipartite graphs have modularity zero}
\label{sec.multi0}

We noted that complete multipartite graphs have modularity zero~(\cite{majstorovic2014note,bolla2015spectral}).
The proofs given for this result are based on properties of the eigenvalues of related matrices. Here we give a shorter simple proof based on graph expansion.

\begin{thm}
  If $G$ is a complete multipartite graph then $\q(G)=0$.
\end{thm}
\begin{proof}
We use the following recent expansion result~\cite{modexp}: for every graph $G$
\[\q(G)=0 \;\; \mbox{ if and only if } \; \; e(A, \overline{A}) \geq 2\sqrt{e(A) e(\overline{A})} \;\;\;\; \forall A \subseteq V(G)\,.\]
Here $\overline{A} = V(G) \setminus A$, and recall that $\q(G)=0$ if $G$ has no edges.  

Fix a complete multipartite graph $G$ on parts $V_1, \ldots, V_k$ for some $k \geq 2$. Let $\emptyset \neq A \subsetneq V(G)$, let $x_i = |A \cap V_i|$ and $y_i=|\overline{A} \cap V_i|$; and let $x=\sum_i x_i=|A|$ and $y=\sum_i y_i=|\overline{A}|$. %
Observe that
$2e(A) = \sum_i x_i (x - x_i) = x^2 - \sum_i x_i^2$, similarly
$2e(\overline{A}) = y^2 - \sum_i y_i^2$, and
$e(A, \overline{A}) = \sum_i x_i (y-y_i) = xy - \sum_i x_i y_i$.
Thus $\, e(A, \overline{A}) \geq 2\sqrt{e(A) e(\overline{A})}\,$ if and only if
\begin{equation} \label{eqn.1}
  (xy - \sum_i x_i y_i)^2 \geq (x^2 - \sum_i x_i^2) (y^2 - \sum_i y_i^2).
\end{equation}
We shall prove the theorem by showing that~(\ref{eqn.1}) holds.
 Let $u_i = x_i /x$ so $\sum_i u_i = 1$, and let $v_i = y_i /y$ so $\sum_i v_i =1$.  Dividing both sides of~(\ref{eqn.1}) by $x^2 y^2$ shows that it is equivalent to
\begin{equation} \label{eqn.2}
 (1-\sum_i u_i v_i)^2 \geq(1-\sum_i u_i^2) (1-\sum_i v_i^2).
\end{equation}
Let $s= (\sum_i u_i^2)^{\frac12}$ and $t= (\sum_i v_i^2)^{\frac12}$, so $0<s,t \leq 1$.  
By the Cauchy-Schwarz inequality we have $\sum_i u_iv_i \leq st$.
Let $\alpha = (\sum_i u_iv_i)/st$, so $0< \alpha \leq 1$.
We may now write the inequality~(\ref{eqn.2}) as
\[ (1-\alpha st)^2 \geq (1-s^2)(1-t^2), \]
that is
\[ s^2 +t^2 - 2 \alpha st \geq (1-\alpha^2) s^2 t^2.\]
The LHS here equals
\[ s^2-2st +t^2 +2(1-\alpha)st = (s-t)^2 + 2(1-\alpha)st  \geq 2(1-\alpha)st. \]
Thus to prove~(\ref{eqn.1}) it suffices to show that
\begin{equation} \label{eqn.3}
2(1-\alpha)st \geq (1-\alpha^2) s^2 t^2.
\end{equation}
But both sides in~(\ref{eqn.3}) are non-negative (since $0<\alpha \leq 1$ and $st >0$), $st \geq s^2t^2$ (since $0 < s,t \leq 1$), and $2(1-\alpha) \geq (1-\alpha^2)$ (since $2(1-\alpha) - (1-\alpha^2) = (1-\alpha)^2 \geq 0$).
Hence~(\ref{eqn.3}) holds, and the proof is complete.
\end{proof}


\needspace{8\baselineskip}
\section{Proofs for results on random graphs}\label{sec.vdenserg}
Most of this section is taken up with proving Theorem~\ref{thm.Gnp}. After that we quickly prove Theorem~\ref{thm.from-modER}. 
\subsection{Proof of Theorem~\ref{thm.Gnp}}
Part (a). This part follows easily from Theorem~\ref{thm.detmod0}. 
Let $p=p(n)$ satisfy $p \geq 1- 1/n + \omega(n)\, n^{-3/2}$.
The number of edges missing in $\Gnp$ is at most $X$, where $X \sim \Bin(\binom{n}{2}, 1/n - \omega \, n^{-3/2})$.  Now $\E[X] \leq n/2 - \omega \sqrt{n}/2$, and $\var(X) \leq n/2$. 
Thus, by Chebyshev's inequality,
\[ \pr(X \geq n/2) \leq \pr(|X- \E[X]| \geq \omega \sqrt{n}/2) \leq \frac{n/2}{\omega^2 n/4}  = o(1). \] 
Hence $\q(\Gnp)=0$ whp by Theorem~\ref{thm.detmod0}. This completes the proof of Theorem~\ref{thm.Gnp}\,(a).
\smallskip

Part\,(b).  Let $p=p(n) = 1-1/n$.
For subpart (i), note that the number $X$ of edges missing in $\Gnp$ satisfies $X \sim \Bin(\binom{n}{2}, 1/n)$, and $X$ is asymptotically normal with mean and variance $n/2$ (see for example~\cite{bollobas2001random}).
Thus $\pr(X \leq n/2) = \frac12 +o(1)$, 
and so by Theorem~\ref{thm.detmod0} as above,  
$\pr(\q(\Gnp)=0) \geq \frac12 +o(1)$. 
It is now enough to appeal to subpart (ii) to conclude that we must have $\pr(\q(\Gnp)=0) = \frac12 +o(1)$. Hence we have finished the proof of (b) subpart (i), conditional on the proof of subpart (ii).

Now consider subpart (ii).  For $X$ as above, there is a constant $c>0$ such that $\pr(X > \lfloor n/2 \rfloor + c\sqrt{n}) < \eps/2$.
Let $\hat{G}$ be any $n$-vertex graph with $x$ edges missing, where $x \leq \lfloor n/2 \rfloor + c\sqrt{n}$;
and let $H$ be a graph obtained by deleting $\min\{x, \lfloor n/2 \rfloor\}$ of the missing edges of $\hat{G}$ from $K_n$. Then $\q(H)=0$ by Theorem~\ref{thm.detmod0}; and by Lemma~\ref{lem.edgeSim} (the robustness lemma),
\[\q(\hat{G}) = |\q(H)- \q(\hat{G})| \leq 2 (X- \lfloor n/2 \rfloor)^+ /e(H) \leq (1+o(1)) 2c\sqrt{n}/(n^2/2) < 5cn^{-3/2}
\]
for $n$ sufficiently large.  Now let $\beta =5c$, and observe that
\[\pr(\q(G_{n,p}) \geq \beta n^{-3/2}) \leq \pr(X > \lfloor n/2 \rfloor + c\sqrt{n}) < \eps/2.
\]
It remains to show that there is a constant $\alpha>0$ such that $\pr(\q(G_{n,p}) \leq \alpha n^{-3/2}) < \eps/2$. There is a constant $c_0 >0$ such that $\pr(X < n/2 + c_0 \sqrt{n}) < \frac12 + \eps/2$.  Note that $\pr(n/2+c_0 \sqrt{n} \leq X \leq n/2 + c \sqrt{n}) > \frac12- \eps$.
Let $m=m(n)$ satisfy $n/2+c_0 \sqrt{n} \leq m \leq n/2 + c \sqrt{n}$.  Consider the random graph $G_{n,m}$  and its complement $\overline{G}_{n,m}$.
It suffices to show that $\q(\overline{G}_{n,m}) \geq (c_0+o(1)) n^{-3/2}$ whp.

Let us recall some definitions.
If the graph $G$ has $n$ vertices, $m$ edges and $k$ components, then the \emph{cyclomatic number} (or cycle rank) $\cyclo(G)$ is $m-n+k$, which is the least number of edges that can be deleted to yield a tree or forest.  Thus $\cyclo(G)$ is $0$ if $G$ is a tree, and is 1 if $G$ is a unicyclic graph.  Also, let $\isol(G)$ be the number of isolated vertices; and let $L_1(G)$ be the maximum order of a component.  We shall use one simple deterministic lemma concerning these quantities.
\begin{lemma} \label{lem.claim}
Suppose that the graph $G$ satisfies
\begin{equation} \label{eqn.cycLisol}
   L_1(G) \leq \isol(G) +1. 
\end{equation} 

Then $G$ has a balanced bipartition $A,B$ such that 
\begin{equation} \label{eqn.claimA}
    e_G(A)+e_G(B) \leq \cyclo(G).
\end{equation}
\end{lemma}
\begin{proof}
To prove the inequality~(\ref{eqn.claimA}), initially set $A=B=\emptyset$.  Consider in turn the components $C$ in $G$ other than the isolated vertices. Given $C$, delete a set $D$ of at most $\cyclo(C)$ edges to obtain a connected bipartite subgraph, with (vertex) parts $C_1,C_2$.  Add $C_1$ to $A$ and $C_2$ to $B$, or $C_1$ to $B$ and $C_2$ to $A$, so as to keep the maximum of $|A|$ and $|B|$ as small as possible. Observe that the edges in $D$ and no other edges are added to $e(A) \cup e(B)$.
At the end of this part of the process, $||A|-|B|| \leq L_1$ and $e_G(A)+e_G(B) \leq \cyclo(G)$.
Now consider the (many) isolated vertices, and add them in turn to a smaller of $A$ and $B$.
Finally we have $||A|-|B|| \leq 1$, which completes the proof of~(\ref{eqn.claimA}).
\end{proof}

Now let us return to the random graph  $G=G_{n,m}$.
Whp it satisfies (1) $\cyclo(G) = O(\log n)$, (2) $L_1(G) \leq n^{2/3 +o(1)}$,
and (3) $\isol(G) = (1/e +o(1))\, n$.  In more detail we have the following.  Suppose that $0 \leq s \leq n^{2/3}$ and  $m=n/2 +s$.
Then $\cyclo(G_{n,m}) = (\frac23 +o(1)) \log n$ whp, by~\cite{luczakcomponent}
Theorem~10\!~(i); and $L_1(G_{n,m}) \leq \omega \, n^{2/3}$ whp, see for example~\cite{JLRbook} Theorem~5.23. 
Finally, $\isol(G_{n,m})$ is asymptotically Po$(n/e)$, see for example~\cite{bollobas2001random}. 

Hence whp (\ref{eqn.cycLisol}) holds for $G$, and so there is a balanced bipartition $\cA=(A,B)$ such that the inequality~\eqref{eqn.claimA} holds.  We use this partition for the complementary graph $\overline{G}=\overline{G}_{n,m}$.
Now
\begin{eqnarray*}
q_\cA^E (\overline{G}) 
& \geq &
\left(\binom{|A|}{2} + \binom{|B|}{2} - \cyclo(G) \right)/ m\\
& \geq &
(\tfrac14 n(n-2) - O(\log n))/(\tfrac12 n(n-2) -c_0 \sqrt{n})\\
&=&
\tfrac12 + (\tfrac12 c_0 \sqrt{n}- O(\log n))/(\tfrac12 n(n-2) -c_0 \sqrt{n})\\
&=&
\tfrac12 + c_0n^{-3/2} - O(n^{-2} \log n).
\end{eqnarray*}
For the degree tax, note that if $n$ is odd then
$2\big(\binom{(n+1)/2}{2} - \binom{(n-1)/2}{2}\big) = n-1$, and so by~(\ref{eqn.claimA})
\[ |\vol_{\overline{G}}(A)-\vol_{\overline{G}}(B)| \leq n-1 + 2 \cyclo(G) \leq n+2m \leq 3n. \]
Letting $\nu = \vol(\overline{G})$, we have
\[ q_{\cA}^D(\overline{G}) \leq \nu^{-2} ( ( \frac{\nu}{2} + \tfrac32 n)^2  + (\frac{\nu}{2} - \tfrac32 n)^2 ) =  \nu^{-2} ( \frac{\nu^2}{2} + \tfrac92 n^2) = \tfrac12 + O(n^{-2}).
\]
Hence $q_{\cA}(\overline{G}) \geq c_0n^{-3/2} -O(n^{-2} \log n)$, which completes the proof of part (b).

\bigskip \bigskip

Part\,(c). In order to prove part\,(c) of Theorem~\ref{thm.Gnp} we start with three deterministic lemmas, Lemmas~\ref{lem.qD} and~\ref{lem.excess} which we will apply to the complementary graph, together with Lemma~\ref{lem.smallcomps} which will be used to prove Lemma~\ref{lem.excess}.  We shall need here only the case $k=2$ of Lemma~\ref{lem.qD}.

\smallskip

\begin{lemma} \label{lem.qD}
Let $k \geq 2$ be fixed, and let the $n$-vertex graph $G$ satisfy
$e(G) = o(n^{3/2})$.
Let $\cA=(A_1,\ldots,A_k)$ be a partition of $V(G)$ such that $\max_i |A_i|  \leq n/k \, +o(\sqrt{n})$. Then for the complementary graph $\overline{G}$ we have  $q_{\cA}^D(\overline{G})= \frac1{k} + o(1/n)$.
\end{lemma}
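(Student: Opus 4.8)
The plan is to compute the degree tax $q_{\cA}^D(\bar{G})$ directly from its definition and show the error terms are $o(1/n)$. Recall that $q_{\cA}^D(\bar{G}) = \frac{1}{4\bar{m}^2}\sum_i \vol_{\bar{G}}(A_i)^2$ where $\bar{m} = e(\bar{G}) = \binom{n}{2} - e(G)$. The key point is that because $G$ is very sparse ($e(G) = o(n^{3/2})$), the complement $\bar{G}$ is very dense and each vertex has degree close to $n-1$; more precisely, writing $d_v$ for the degree of $v$ in $G$, the degree of $v$ in $\bar{G}$ is $n-1-d_v$, and $\sum_v d_v = 2e(G) = o(n^{3/2})$.

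First I would estimate $\vol_{\bar{G}}(A_i)$. For a part $A_i$ with $|A_i| = a_i$, we have $\vol_{\bar{G}}(A_i) = \sum_{v \in A_i}(n-1-d_v) = a_i(n-1) - \vol_G(A_i)$. Since $\vol_G(A_i) \le 2e(G) = o(n^{3/2})$ and $a_i \le n/k + o(\sqrt n)$, the dominant term is $a_i(n-1) = a_i n (1 + O(1/n))$. Then $\vol_{\bar{G}}(A_i)^2 = a_i^2 n^2 - 2 a_i n \vol_G(A_i) + O(a_i^2 n) + (\text{lower order})$. Summing over $i$: the main term is $n^2 \sum_i a_i^2$, the cross term is $2n \sum_i a_i \vol_G(A_i)$ which is at most $2n \cdot (n/k + o(\sqrt n)) \cdot 2e(G) = O(n^2 e(G)) = o(n^{7/2})$, and the $O(a_i^2 n)$ terms sum to $O(n^3)$. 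So $\sum_i \vol_{\bar{G}}(A_i)^2 = n^2 \sum_i a_i^2 + o(n^{7/2})$.

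Next I would handle $\sum_i a_i^2$. Since $\sum_i a_i = n$ and each $a_i \le n/k + o(\sqrt n)$, by convexity (the sum of squares subject to a fixed sum is minimized when all parts are equal) combined with the upper bound constraint, we get $\sum_i a_i^2 = n^2/k + o(n^{3/2})$: indeed writing $a_i = n/k + \delta_i$ with $\sum_i \delta_i = 0$ and each $\delta_i \le o(\sqrt n)$ (hence, since they sum to zero, each $\delta_i \ge -(k-1)\,o(\sqrt n) = o(\sqrt n)$ as well), we have $\sum_i a_i^2 = n^2/k + \frac{2n}{k}\sum_i \delta_i + \sum_i \delta_i^2 = n^2/k + 0 + o(n)$. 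Therefore $\sum_i \vol_{\bar{G}}(A_i)^2 = n^4/k + o(n^{7/2})$. Finally, $4\bar m^2 = 4(\binom{n}{2} - e(G))^2 = 4 \cdot \frac{n^4}{4}(1 + o(1/\sqrt n))^2 = n^4(1 + o(1/\sqrt n))$, using $e(G) = o(n^{3/2}) = o(n^2/\sqrt n)$. Dividing, $q_{\cA}^D(\bar G) = \frac{n^4/k + o(n^{7/2})}{n^4 + o(n^{7/2})} = \frac1k + o(1/\sqrt n)$.

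This gives $o(1/\sqrt n)$, which is weaker than the claimed $o(1/n)$, so the main obstacle is sharpening the error bound. The fix is to be more careful: since $e(G) = o(n^{3/2})$, we actually have $\bar m^2 = \frac{n^4}{4}(1 - o(1/\sqrt n))$ but we need to track this against the numerator error. The cross term $2n\sum_i a_i \vol_G(A_i) \le 4n \cdot (n/k) \cdot e(G) = o(n^{7/2})$ — this is the genuinely dominant error, and dividing by $n^4$ gives $o(n^{-1/2})$, not $o(n^{-1})$. So I would revisit whether the hypothesis should read $e(G) = o(n)$ rather than $o(n^{3/2})$, or whether a cancellation I am missing occurs; most likely the intended reading uses the stronger input that in the application $e(\bar G$ of the original, complementary graph$)$ has $e(G) = O(n)$, in which case the cross term is $O(n^3) = o(n^{7/2})$ — still only $o(n^{-1/2})$ after division. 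The resolution is that $\sum_i \delta_i^2 = o(n)$ contributes $n^2 \cdot o(n) = o(n^3)$ to the numerator, giving $o(1/n)$; and the cross term, when $e(G) = o(n)$, contributes $n \cdot (n/k) \cdot o(n) = o(n^3)$, giving $o(1/n)$ after dividing by $n^4$. So the real content is that the hypothesis $e(G) = o(n^{3/2})$ must be used together with $\max_i|A_i| \le n/k + o(\sqrt n)$ in the combination $e(G) \cdot \max_i |A_i| = o(n^{3/2}) \cdot (n/k + o(\sqrt n))$, and the careful bookkeeping of which cross terms are $o(n^3)$ versus merely $o(n^{7/2})$ is exactly the step to get right — that is where I would spend the effort.
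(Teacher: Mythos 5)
Your proposal does not close the argument: you end up with $q_{\cA}^D(\bar G)=\frac1k+o(1/\sqrt n)$ and then speculate that the hypothesis might need to be strengthened to $e(G)=o(n)$. That speculation is wrong --- the lemma is correct as stated --- and the reason your bookkeeping loses a factor of $\sqrt n$ is that you estimate the numerator $\sum_i\vol_{\bar G}(A_i)^2$ and the denominator $4\,e(\bar G)^2$ separately, which discards an exact cancellation between their error terms. The missing idea is to centre the part-volumes at their \emph{mean}: since $\sum_i\vol_{\bar G}(A_i)=\vol(\bar G)=2e(\bar G)$ exactly, writing $\vol_{\bar G}(A_i)=\tfrac1k\vol(\bar G)+\delta_i$ forces $\sum_i\delta_i=0$, so
\[
\sum_i\vol_{\bar G}(A_i)^2=\tfrac1k\vol(\bar G)^2+\sum_i\delta_i^2
\]
with \emph{no} linear term. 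The hypotheses give $\vol_{\bar G}(A_i)\le(\tfrac nk+o(\sqrt n))(n-1)=\tfrac1k\vol(\bar G)+o(n^{3/2})$ (using $\tfrac nk(n-1)=\tfrac2k\binom n2=\tfrac1k\vol(\bar G)+\tfrac2k e(G)$ and $e(G)=o(n^{3/2})$), so each $\delta_i\le o(n^{3/2})$, and since they sum to zero also $\delta_i\ge -(k-1)\,o(n^{3/2})$. Hence $\sum_i\delta_i^2=o(n^3)$, and dividing by $\vol(\bar G)^2=\Theta(n^4)$ gives error $o(1/n)$, not $o(1/\sqrt n)$. In your expansion this cancellation appears as follows: the troublesome cross term $-2(n-1)\sum_i a_i\vol_G(A_i)$ in the numerator must be set against the cross term $-\tfrac4k n(n-1)e(G)=-\tfrac2k n(n-1)\sum_i\vol_G(A_i)$ arising from $\tfrac1k\cdot 4\bar m^2$; their difference is $-2(n-1)\sum_i(a_i-\tfrac nk)\vol_G(A_i)=O(n)\cdot o(\sqrt n)\cdot o(n^{3/2})=o(n^3)$, which is exactly what is needed. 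So the combination of hypotheses that matters is $\bigl(\max_i|A_i|-\tfrac nk\bigr)\cdot e(G)=o(\sqrt n)\cdot o(n^{3/2})=o(n^2)$, not $\max_i|A_i|\cdot e(G)$.

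For comparison, the paper's proof sidesteps all of this bookkeeping: it only proves the \emph{upper} bound $\sum_i\vol_{\bar G}(A_i)^2\le\tfrac4k e(\bar G)^2+k(k-1)x^2$ with $x=o(n^{3/2})$, by convexity of $y\mapsto y^2$ subject to the constraints $\sum_i\vol_{\bar G}(A_i)=2e(\bar G)$ and $\vol_{\bar G}(A_i)\le\tfrac2k e(\bar G)+x$ (the maximum is attained with $k-1$ parts at the cap), and pairs this with the trivial lower bound $q_{\cA}^D(\bar G)\ge\tfrac1k$ for any $k$-part partition. You should either adopt that route or repair your direct computation along the lines above.
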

\begin{proof}
For each $i$
\[ \vol_{\overline{G}}(A_i) \leq (\frac{n}{k}+ o(\sqrt{n}))(n-1) = \frac2{k} \binom{n}{2} + o(n^{3/2}) =  \frac2{k} e(\overline{G}) +x \]
where 
$x =  \frac2{k}e(G) + o(n^{3/2}) = o(n^{3/2})$.  Hence, by the convexity of $f(x)=x^2$,
\begin{eqnarray*}
\sum_{i=1}^{k}\vol_{\overline{G}}(A_i)^2 & \leq &
(k-1) (\frac2{k} e(\overline{G}) +x)^2 + (\frac2{k} e(\overline{G}) -(k-1)x)^2\\
& = &
\frac4{k} e(\overline{G})^2 + k(k-1) x^2 \; = \; \frac4{k} e(\overline{G})^2 + o(n^{3}),
\end{eqnarray*}
and so $q_{\cA}^D(\overline{G}) \leq \frac1{k} + o(n^{-1})$.  Also
$q_{\cA}^D(\overline{G}) \geq \frac1{k}$ since $\cA$ has $k$ parts, which completes the proof. 
\end{proof}

\smallskip

\needspace{6\baselineskip}
\begin{lemma} \label{lem.smallcomps}
Let $F$ be an $n$-vertex forest with maximum degree at most $d$, and let $\rho \geq 1/n$.
Then we can find a set of less than $d/\rho$ edges such that deleting these edges yields a forest in which each component tree has at most $\rho n$ vertices. 
\end{lemma}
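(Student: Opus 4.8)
The plan is to delete edges greedily: repeatedly find a component tree $T$ with more than $\rho n$ vertices, and remove a single well-chosen edge of $T$ that splits it into two pieces each having at least roughly $\rho n / d$ vertices (or more precisely, ensures the part that is "cut off" is not too small), so that progress is made and the process terminates quickly. First I would recall the standard centroid-type fact for trees: in any tree $T$ with $N$ vertices there is an edge $e$ whose removal splits $T$ into two subtrees with vertex counts $N_1 \le N_2$ satisfying $N_1 \ge (N-1)/d \ge (N - 1)/d$, since at a centroid vertex $v$ (one where every component of $T - v$ has at most $N/2$ vertices) the components of $T-v$ partition the remaining $N-1$ vertices into at most $d$ parts, so the largest such component $C$ has at least $(N-1)/d$ vertices, and cutting the edge from $v$ into $C$ removes a subtree of size $|C| \in [(N-1)/d, N/2]$.

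With this in hand, the argument is a potential/counting argument. I would track the multiset of component sizes of the current forest. Each time we cut an edge inside a component $T$ of size $N > \rho n$, we split off a subtree of size $N' \ge (N-1)/d > (\rho n - 1)/d \ge (\rho n)/d - 1$ (using $\rho \ge 1/n$ to absorb constants, or more cleanly, noting $N \ge \rho n$ forces $N-1 \ge \rho n - 1 \ge \rho n \cdot (1 - 1/(\rho n)) $; it is cleanest to just say $N' \ge \lceil (N-1)/d\rceil$ and bound crudely). The cleanest accounting: assign to each deletion the "small side" $N'$; these small sides are vertex-disjoint subsets of $[n]$ that are created and never touched again (once a component has size $\le \rho n$ we never cut it, and the small side has size $< N/2 \le$ the side we keep recursing on — but actually we must be careful the small side could itself still be $> \rho n$; that is fine, it just gets processed later, but then its "small side" when it is later split is again disjoint from everything). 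So I would instead argue: every edge we delete, when deleted, had both its endpoints in a component of size $> \rho n$ at that moment; charge the deletion to, say, the set of vertices in the smaller resulting piece. Hmm — the subtlety is ensuring these charged sets are disjoint across deletions.

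The cleanest route avoids disjointness bookkeeping: bound the number of deletions directly. We never split a component of size $\le \rho n$. Consider the final forest: it has some number $c$ of components, and the number of edges deleted is exactly $c$ minus (number of components we started with) $\le c - 1 < c$. So it suffices to show the final forest has fewer than $d/\rho$ components. But that is false in general (a path split into tiny pieces has many components). So instead I will control the process, not the outcome: I claim the total number of cuts is less than $d/\rho$. To see this, order the cuts $e_1, \dots, e_r$ and let $B_j$ be the smaller of the two pieces created by cut $e_j$ within its component. The key claim is that the sets $B_1, \dots, B_r$ are pairwise disjoint: indeed if $i < j$ then at the moment of cut $j$, the component being cut is a subset of $[n]$ disjoint from $B_i$ — because $B_i$ was "pinched off" as its own component at step $i$ and any later cut $e_j$ either lies inside $B_i$ (then by induction $B_j \subseteq B_i$ and... this needs care) or outside it. The clean fix: choose the cut so that $B_j$ is always the piece NOT containing a fixed reference point, or better, always recurse only into components still larger than $\rho n$ and when cutting pick $e_j$ to peel off a piece of size in $[\rho n/(2d), \rho n]$ when possible. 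I think the honest version the authors intend is: each $B_j$ has size $\ge \rho n / d$ (from the centroid bound, since $N > \rho n$), and the $B_j$ are disjoint because after cut $j$ the piece $B_j$ is a component of size $\le \rho n$ which is then permanently untouched while all subsequent cuts occur in components that, being $> \rho n$ in size, cannot be contained in or contain $B_j$ — one shows by induction they are disjoint from $B_j$. Then $r \cdot (\rho n / d) \le \sum_j |B_j| \le n$, giving $r \le d/\rho$, and in fact strict inequality from the first-cut slack. The main obstacle is precisely nailing the disjointness of the $\{B_j\}$ cleanly, which requires choosing the centroid edge so the small side stays small ($\le N/2 \le$ the recursed side) and verifying inductively that once a sub-$\rho n$ component is created it is never entered again; I would lay this out as a short induction on the number of cuts.
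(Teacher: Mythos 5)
Your overall strategy --- repeatedly detach a moderate-sized piece with a single edge cut, finalize it, and bound the number of cuts via the disjointness of the detached pieces --- can be made to work, but as written there is a genuine gap at the central step, which you yourself flag. Your final accounting needs every detached piece $B_j$ to satisfy \emph{both} $|B_j|\ge \rho n/d$ (so that $r\cdot\rho n/d\le\sum_j|B_j|\le n$) \emph{and} $|B_j|\le\rho n$ (so that $B_j$ is a finished component, never cut again, whence the $B_j$ are pairwise disjoint). The centroid fact you invoke does not deliver this pair of bounds: it produces a smaller side of size in $[(N-1)/d,\,N/2]$, and when $N/2>\rho n$ that side may exceed $\rho n$, in which case it is re-entered by later cuts and the disjointness (hence the counting) collapses. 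Your proposed repair --- ``pick $e_j$ to peel off a piece of size in $[\rho n/(2d),\rho n]$ when possible'' --- is never justified, and even if it were, the lower bound $\rho n/(2d)$ only yields $r\le 2d/\rho$, weaker than the claimed bound. What is actually needed is the following statement, which is true but requires its own argument: in any tree with $N>\rho n$ vertices and maximum degree at most $d$, some edge detaches a component of size in $[\rho n/d,\,\rho n]$. (Root the tree at a leaf, take a deepest vertex $v$ whose subtree exceeds $\rho n$ vertices, and cut the edge to its largest child: that child subtree has size at most $\rho n$ by the choice of $v$, and size greater than $(\rho n-1)/(d-1)$, which one checks is at least $\rho n/d$ using $\rho n\ge 1$ and integrality.) Without this, the proof does not close.

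For comparison, the paper sidesteps the per-edge guarantee entirely by amortizing. It locates essentially the same vertex $v$ (walk from a leaf $r$ until reaching the first vertex all of whose forward subtrees have at most $\rho n$ vertices) and deletes \emph{all} of the at most $d$ edges incident to $v$ in one batch. Each batch moves more than $\rho n$ vertices into components of size at most $\rho n$, so fewer than $1/\rho$ batches, hence fewer than $d/\rho$ edge deletions, suffice. This avoids both the disjointness bookkeeping and the need for each individual cut to detach at least $\rho n/d$ vertices.
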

\begin{proof} 
Let $x$ be the total number of vertices in components with more than $\rho n$ vertices.  Thus $x \leq n$, and we may assume that $x>0$.
Let the tree $T$ be a component with more than $\rho n$ vertices.
Pick a leaf $r$ in $T$, and  orient the edges away from $r$.  Call an edge $uv$ \emph{open} if when we delete it the component containing the terminal vertex $v$ has order more than $\rho n$.  Start at $r$ and follow a path of open edges until we reach a vertex $z$ such that no edge out of $z$ is open.   
Remove from $T$ all the edges incident with $z$. This removes at most $d$ edges, and decreases $x$ by more than $\rho n$.  We need to repeat this less than $1/\rho$ times, and so we delete in total less than $d/\rho$ edges.
\end{proof}
Let $G$ be a graph with $n$ vertices and $m$ edges. The \emph{excess} of $G$ is $m-n$. The \emph{core} of $G$ is the graph obtained by repeatedly deleting leaves, and (if it is non-empty) it is the unique maximal subgraph of~$G$ with minimum degree at least~2.  We call a bipartition $\cA=(A,B)$ \emph{balanced} when~$||A|-|B|| \leq 1$.

\smallskip

\begin{lemma} \label{lem.excess}
Let $G$ be an $n$-vertex graph with maximum degree at most $d$, let $\rho \geq 1/n$, and let~$G$ have at least $\rho n - 1$ isolated vertices; and suppose that the core of $G$ has excess at most~$s$, and has at most $j$ components.  Then there is a balanced bipartition $(A,B)$ of $V(G)$ such that $e(G) - e(A,B) < s+j+d/\rho$. 
\end{lemma}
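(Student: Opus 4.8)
The plan is to build the bipartition in two stages, first cutting the large graph into small pieces, then distributing those pieces greedily into two nearly-equal halves. First I would strip off the leaves: the core $C$ of $G$ is obtained by repeatedly deleting leaves, so $G$ decomposes into $C$ together with a forest $F$ of ``pendant trees'' hanging off $C$ (and any isolated vertices). Write $C$ for the core, with excess at most $s$ and at most $j$ components. I would next apply Lemma~\ref{lem.smallcomps} to the forest part $F$ (which has maximum degree at most $d$): deleting fewer than $d/\rho$ edges breaks $F$ into a forest whose every component has at most $\rho n$ vertices. After these deletions the graph $G$ has been cut into: the $j$ core components, and a collection of small trees each of size at most $\rho n$. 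The edges deleted so far number fewer than $d/\rho$, and I will account for the core's ``internal'' edges separately.

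The second step is to two-colour the resulting pieces. The core $C$ has at most $j$ components; a connected graph with excess at most $s$ and one component has at most (number of vertices) $+\, s$ edges, so across all $j$ core components the number of edges inside $C$ is at most $|V(C)| + s$ — but more usefully, if I simply two-colour each core component monochromatically I pay at most $e(C)$ to the cut, which I need to bound by $s + j$. The clean way: each core component is 2-edge-connected-ish but the key fact is that a connected graph on $v$ vertices with excess $\le s_i$ has a spanning tree plus at most $s_i+1$ extra edges... hmm — actually the bound I want is that the total number of edges whose endpoints I am forced to separate, if I place each core component entirely on one side, is $0$, and the real cost is only in balancing. So: place each of the $j$ core components and each small tree as an indivisible block on one of the two sides, chosen greedily (largest blocks first, always adding the next block to the currently smaller side). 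Since every block has size at most $\max(\rho n,\, \max_i |V(C_i)|)$ — and here I need the core components to also be small, which is where I'd use that the core has excess $\le s$ hence few components relative to... wait, I need a size bound on core components too. I would obtain this by also applying a small-components argument to a spanning forest of the core, or by noting the hypothesis that forces core components to be $o(n)$; if not available I would instead cut the core's own spanning-tree edges. In any case, greedy block-balancing with all blocks of size $\le \max(\rho n, \text{small})$ gives $||A|-|B|| \le \rho n \le $ something, and then one further swap/move of a single small block or a few vertices makes it balanced, costing at most $d$ more cut edges per moved vertex; keeping the number of such corrections $O(1)$ (in fact absorbed into the $s+j+d/\rho$ budget) finishes it.

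Assembling the count: the cut edges are (i) the fewer than $d/\rho$ edges deleted via Lemma~\ref{lem.smallcomps}, (ii) at most $s+j$ edges attributable to the core — this is the step to get right, using that a graph with $j$ components and excess $\le s$ has at most $s+j$ independent cycles' worth of ``extra'' edges, so after choosing a spanning forest of the core and putting each core-tree on one side, only $\le s+j$ core edges can be cut — and (iii) $O(1)$ edges from the final balancing corrections. Summing, $e(G)-e(A,B) < s + j + d/\rho$, as required.

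The main obstacle I expect is step (ii): cleanly arguing that the core contributes at most $s+j$ to the cut. The subtlety is that putting a whole core component on one side costs $0$ core edges, but then the balancing step may force me to split a core component, potentially cutting many edges. The resolution is to keep core components intact throughout (never split them) and do all balancing using only the small pendant trees and isolated vertices — this is why the hypothesis supplies $\ge \rho n - 1$ isolated vertices and bounded degree: there is enough ``small change'' to balance without ever cutting the core, so the only core edges ever cut are those removed when passing to a spanning forest of the core, of which there are at most $e(C) - (|V(C)| - j) = \text{excess}(C) + j \le s + j$ (counting each core component's cycle rank). Verifying that the small trees plus isolated vertices genuinely suffice to reach $||A|-|B|| \le 1$ — i.e.\ that the total size of non-core blocks dominates the imbalance created by the core blocks, or that one can interleave — is the delicate bookkeeping, but it follows because individual non-core blocks have size $\le \rho n$ and we have $\ge \rho n$ isolated vertices available as size-$1$ blocks to fine-tune.
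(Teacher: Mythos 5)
There is a genuine error of direction here: $e(G)-e(A,B)$ counts the edges \emph{not} cut by the bipartition, so the lemma asks for a balanced bipartition in which \emph{almost every} edge of $G$ goes between $A$ and $B$ (a near-perfect max-cut of the sparse graph $G$; this is what is needed later, since a large cut in $G$ means few edges of $\overline{G}$ between the parts). Your construction does the opposite. Placing each core component and each small tree monochromatically (``as an indivisible block on one of the two sides'') puts all of that block's edges inside one part, so none of them contribute to $e(A,B)$. Your final tally lists the deleted edges and the core's excess edges as ``the cut edges'', but with your placement these are essentially the \emph{only} edges that could lie between the parts, and the roughly $e(G)-s-j-d/\rho$ surviving forest edges are all uncut. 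Your argument therefore establishes $e(A,B)< s+j+d/\rho$, not $e(G)-e(A,B)<s+j+d/\rho$.

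The repair is to exploit that the graph remaining after the deletions is a forest, hence bipartite. First delete the at most $s+j$ edges needed to reduce the core to a spanning forest (your count $e(C)-(|V(C)|-j)=\mathrm{excess}(C)+j\le s+j$ is exactly right), so that all of $G$ becomes a forest; then apply Lemma~\ref{lem.smallcomps} to this entire forest, not just to the pendant part --- this also disposes of your worry about large core components --- deleting fewer than $d/\rho$ further edges and leaving components of size at most $\rho n$. Now take a \emph{proper} $2$-colouring of the resulting forest $F'$: every surviving edge then joins the two colour classes, so the only edges of $G$ with both endpoints in the same part are among the fewer than $s+j+d/\rho$ deleted ones. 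Balancing works as you suggest at the end: for each component of $F'$ choose greedily which of its two colour classes goes to $A$, keeping the imbalance below $\rho n$, and then distribute the at least $\rho n-1$ isolated vertices to reach $||A|-|B||\le 1$ without affecting any edge.
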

\begin{proof}
By removing at most $s+j$ edges from $G$ we may reduce it to a forest $F$.  By Lemma~\ref{lem.smallcomps}, by removing less than $d/\rho$ further edges we may form a forest $F'$ where each (tree) component has at most $\rho n$ vertices.  We may find a proper 2-colouring $(A',B')$ of $F'$ less its isolated vertices such that $||A'|-|B'|| \leq \rho n -1$.  Now by adding the isolated vertices we can form a proper 2-colouring $(A,B)$ of $F'$ such that $||A|-|B|| \leq 1$. To complete the proof note that
$e(A,B) \geq e(F') > e(G) - (s+j+d/\rho)$.
\end{proof}

\smallskip

Now consider the random graph $G \sim G_{n,c/n}$.  (We shall be interested in the complementary graph~$\overline{G}$.)
For each $c \in [1,\infty)$ there is a unique $x=x(c) \in (0,1]$ such that $xe^{-x}=ce^{-c}$.  Here we are following Frieze and Karonski~\cite{frieze2015book}.
The function $x(c)$ is continuous and strictly decreasing on~$[1,\infty)$.

\smallskip

\begin{lemma}[Lemma~2.16 of~\cite{frieze2015book}]\label{lem.FK}
Let $c>1$ and let $x=x(c)$.  Then whp the core of $G_{n,c/n}$ has $(1-x)(1-x/c)n +o(n)$ vertices and $(1-x/c)^2 cn/2 +o(n)$ edges.
\end{lemma}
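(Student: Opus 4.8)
The plan is to pin down the expected numbers of core vertices and core edges by a branching-process (local-limit) computation, and then to upgrade this to a whp statement. Throughout write $G=G_{n,c/n}$, and let $\eta=\eta(c)\in(0,1)$ be the extinction probability of the Galton--Watson process with $\mathrm{Poisson}(c)$ offspring, so $\eta=e^{-c(1-\eta)}$; since $c>1$ this process is supercritical and $\eta<1$. Setting $x:=c\eta$ turns the fixed-point equation into $x=c\,e^{-(c-x)}$, i.e.\ $xe^{-x}=ce^{-c}$ with $x\le 1$, so $\eta=x/c$ is exactly the quantity $x(c)$ fixed before the lemma and $e^{-(c-x)}=\eta=x/c$. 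I would start from the standard fact that $G_{n,c/n}$ converges locally (Benjamini--Schramm) to the $\mathrm{Poisson}(c)$ Galton--Watson tree: for fixed $r$ the depth-$r$ neighbourhood of a uniformly random vertex converges in distribution to the depth-$r$ truncation of that tree, with the subtrees hanging off distinct edges behaving as independent copies of it.

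Next I would use the structure of the core (obtained by repeatedly deleting leaves): removing the core from $G$ leaves a forest, each of whose trees meets the core in at most one vertex. Hence a vertex $v$ survives the peeling iff at least two of the pendant subtrees hanging from its incident edges survive, where, unwinding the recursion in the tree limit, the subtree rooted at a child $w$ survives iff $w$ has a surviving child --- i.e.\ iff that subtree is infinite, an event of probability $1-\eta$ for a $\mathrm{Poisson}(c)$ Galton--Watson subtree. Since $v$ has asymptotically $\mathrm{Poisson}(c)$ incident subtrees, conditionally independent, the number of surviving branches at $v$ is asymptotically $\mathrm{Poisson}(c(1-\eta))=\mathrm{Poisson}(c-x)$, so, using $e^{-(c-x)}=x/c$,
\[
\pr[v\in\mathrm{core}]\;\longrightarrow\;1-e^{-(c-x)}-(c-x)e^{-(c-x)}\;=\;1-\tfrac{x}{c}-(c-x)\tfrac{x}{c}\;=\;(1-x)\bigl(1-\tfrac{x}{c}\bigr).
\]
An edge $uv$ belongs to the core iff both endpoints do; running the same message-passing argument on the two sides of $uv$ gives limiting probability $(1-e^{-(c-x)})^2=(1-x/c)^2$, and equivalently the core-degree of $v$ equals $N$ when $N\ge 2$ and $0$ otherwise, with $N\sim\mathrm{Poisson}(c-x)$, so $\E[\deg_{\mathrm{core}}(v)]\to(c-x)\bigl(1-e^{-(c-x)}\bigr)=(c-x)^2/c$. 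Multiplying $\pr[v\in\mathrm{core}]$ by $n$, and $\E[\deg_{\mathrm{core}}(v)]$ by $n/2$, linearity of expectation gives the asserted first moments $(1-x)(1-x/c)\,n$ and $\tfrac12 n\cdot(c-x)^2/c=(1-x/c)^2\,cn/2$.

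Finally I would promote these to whp statements. The main obstacle is that core membership of a vertex is not a local property --- whether $v$ survives can depend on arbitrarily distant structure --- so the local limit cannot be invoked directly. I would handle this by a second-moment argument: truncate the peeling exploration from a vertex (or an edge) at a slowly growing depth $r=r(n)$; note deterministically that the core is contained in the set of vertices with at least two branches reaching depth $r$, and conversely that a surviving branch closes a cycle within $O(\log n)$ further steps whp, so the truncated approximation to $\pr[v\in\mathrm{core}]$ (and to $\pr[uv\in\mathrm{core}]$) differs from the true value by $o(1)$ as $r\to\infty$; observe that for random $u\ne v$ the radius-$r$ neighbourhoods are disjoint whp, so the truncated indicators are asymptotically independent; conclude that the core-vertex and core-edge counts have variance $o(n^2)$, and since their means are $\Theta(n)$ deduce concentration to within $o(n)$ by Chebyshev. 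A cleaner alternative, which I would in fact prefer, is to analyse the leaf-peeling process directly by Wormald's differential equation method, tracking the numbers of vertices of each small degree as in the $k$-core analyses of Pittel, Spencer and Wormald and of Janson and \L{}uczak; the fixed point of the resulting system is precisely $\eta=x/c$, simultaneously giving the concentration, the stated constants, and the description of the core as a uniform random graph on its (truncated-Poisson) degree sequence. Of course, as this is Lemma~2.16 of~\cite{frieze2015book}, one may simply invoke it.
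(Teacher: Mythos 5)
The paper offers no proof of this lemma at all: it is quoted verbatim as Lemma 2.16 of Frieze and Karonski, so your closing remark that one may simply invoke it is exactly what the authors do. Your sketch of an actual proof is nevertheless sound in its computational core. The identification $x=c\eta$ with $\eta$ the $\mathrm{Poisson}(c)$ extinction probability, the identity $e^{-(c-x)}=x/c$, and the resulting first moments, namely $\pr[v\in\mathrm{core}]\to 1-e^{-(c-x)}-(c-x)e^{-(c-x)}=(1-x)(1-x/c)$ and $\E[\deg_{\mathrm{core}}(v)]\to(c-x)\bigl(1-e^{-(c-x)}\bigr)=c(1-x/c)^2$, all check out, including the slightly subtle point that when $v$ has $N\ge 2$ surviving branches, every surviving branch $(v\to w)$ really does correspond to a core edge (since $w$ then sees a surviving branch back through $v$). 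The genuinely hard step, which you correctly isolate, is that core membership is not locally determined, so local weak convergence yields the constants only heuristically; your truncated second-moment plan still owes the estimate that a branch surviving $r$ rounds of peeling survives the whole peeling with probability tending to $1$ as $r\to\infty$, uniformly in $n$, and that is where a careful write-up would spend most of its effort. The differential-equation / degree-sequence route you mention (Pittel--Spencer--Wormald, Janson--\L{}uczak) is indeed the standard way such core statements are made rigorous and is close in spirit to the textbook argument being cited. In short: correct constants, a viable but incomplete concentration argument, and, appropriately, a fallback to the very citation on which the paper itself relies.
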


We now use the above lemmas to prove the lower bound in Theorem~\ref{thm.Gnp}\,(c) in the special case when the complementary graph is $G_{n,c/n}$ with $c$ small (precisely, when $1<c \leq 2$). 

\smallskip

\begin{lemma}\label{lem.bipdelta}
Let $1<c \leq 2$, and let $G \sim G_{n,c/n}$. Then there exists $\delta>0$ such that whp there is a balanced bipartition $\cA=(A,B)$ of $V(G)$ with
$q_{\cA}(\overline{G}) \geq \delta/n + o(1/n)$.
\end{lemma}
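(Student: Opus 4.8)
The plan is to apply Lemma~\ref{lem.excess} to the (sparse) random graph $G \sim G_{n,c/n}$, which has a small, well-controlled core, and then estimate the modularity score of the resulting balanced bipartition for the complementary graph $\overline{G}$ using Lemma~\ref{lem.qD} for the degree tax together with an explicit count of the number of edges of $\overline{G}$ that cross the bipartition. Write $m = e(\overline{G}) = \binom{n}{2} - e(G) = (1+o(1))\binom{n}{2}$ whp, since $e(G) = \Theta(n)$ whp. For the bipartition $\cA = (A,B)$ we will produce, $q^E_{\cA}(\overline{G}) = 1 - e_{\overline{G}}(A,B)/m$ and $q^D_{\cA}(\overline{G}) = \tfrac14\bigl(\vol_{\overline{G}}(A)^2 + \vol_{\overline{G}}(B)^2\bigr)/m^2$, so $q_{\cA}(\overline{G}) = \tfrac{e_G(A,B) - m/2 + (\text{small})}{m}$ roughly speaking; the whole game is to show $e_{\overline{G}}(A,B)$ beats the degree tax by $\Theta(n)$.

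First I would set up the structural facts about $G$. Fix $d = d(n) = \lceil \log n \rceil$ (any slowly growing bound works); whp $G_{n,c/n}$ has maximum degree at most $d$. By Lemma~\ref{lem.FK}, whp the core of $G$ has at most $(1-x/c)^2 cn/2 + o(n)$ edges and correspondingly bounded vertex count, hence excess at most some $s = s(n) = O(n)$ — in fact we need the excess to be at most a specific linear function of $n$, and the point is that for $c \le 2$ the core is small enough that $s \le \gamma n$ for a constant $\gamma = \gamma(c) < 1/2$ strictly; this is where the hypothesis $c \le 2$ is used. (One should check: excess of core $=$ (edges of core) $-$ (vertices of core), and for $c$ close to $1$ both are $o(n)$, while at $c = 2$ one verifies numerically via $x(2)$ that excess-of-core $/n$ is still comfortably below $1/2$.) Also whp $G$ has at least $\rho n - 1$ isolated vertices for a suitable constant $\rho = \rho(c) > 0$ (the isolated-vertex count is $\sim e^{-c} n$), and the core has at most $j = O(n)$ components — crudely $j \le n$ suffices, or better $j = o(n)$ whp. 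Then Lemma~\ref{lem.excess} with, say, $\rho$ a fixed small constant gives a balanced bipartition $(A,B)$ with $e(G) - e_G(A,B) < s + j + d/\rho$, so $e_G(A,B) > e(G) - s - j - O(d)$.

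Next, translate to $\overline{G}$. Since $|A|, |B| = n/2 + O(1)$, we have $e_{\overline{G}}(A,B) = |A||B| - e_G(A,B) = n^2/4 + O(1) - e_G(A,B)$, hence $e_{\overline{G}}(A,B) < n^2/4 - e(G) + s + j + O(d)$. Meanwhile $m = \binom{n}{2} - e(G)$, so $m/2 = n^2/4 - n/4 - e(G)/2$, and we get $m - e_{\overline{G}}(A,B) \cdot 2 / \,\cdot$ ... more cleanly: $q^E_{\cA}(\overline{G}) - \tfrac12 = \tfrac12 - e_{\overline{G}}(A,B)/m$, and one computes $\tfrac{m}{2} - e_{\overline{G}}(A,B) = \tfrac{n}{4} - s - j + e_G(A,B) - e(G)/2 + O(d)$; using $e_G(A,B) \ge e(G) - s - j - O(d)$ and $e(G) \ge$ (its value, which is $\Theta(n)$), the dominant surviving term is $+\,e(G)/2 - 2s - 2j - n/4 + O(d)$, which we must arrange to be $\Omega(n)$ — this needs $e(G)/2 - 2s - 2j > n/4$ up to lower order, i.e. the excess/component bounds must be good enough. (If the crude bound $j \le n$ is too lossy here, replace it: whp the number of components of the core is $o(n)$, since each component of the core has at least $3$ vertices and there are $o(n)$ core vertices when... — actually the core can be large; instead bound $j$ by the number of unicyclic-or-worse components, which is $O(1)$ in expectation plus the giant's single core, so $j = O(\log n)$ whp. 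This is the standard fact that $G_{n,c/n}$ has $O(1)$ components with a cycle whp apart from the giant.) With $j = O(\log n)$ the inequality becomes $e(G)/2 - 2s > n/4 + o(n)$, and one checks this holds for all $1 < c \le 2$ by plugging in the exact expressions $e(G) \sim cn/2$ and $s \sim \bigl((1-x/c)^2 c/2 - (1-x)(1-x/c)\bigr)n$ and verifying the resulting function of $c$ is positive on $(1,2]$ — it tends to a positive limit as $c \downarrow 1$ (both $s$ and the deficit vanish, but $e(G)/2 - n/4 \sim (c/2 - 1/2)\cdot n/2 \to 0$... so actually near $c=1$ the margin is delicate and $\delta = \delta(c) \to 0$ as $c \to 1$, which is fine — we only need $\delta > 0$ for the fixed $c$). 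Finally, Lemma~\ref{lem.qD} with $k=2$ gives $q^D_{\cA}(\overline{G}) = \tfrac12 + o(1/n)$ (its hypotheses hold since $e(\overline{G}) = \binom{n}{2}$... wait, Lemma~\ref{lem.qD} is stated for $e(G) = o(n^{3/2})$ applied to the complement, so we apply it with "$G$" $= G_{n,c/n}$, which indeed has $o(n^{3/2})$ edges, and "$\bar G$" its complement — exactly our setting). Combining, $q_{\cA}(\overline{G}) = q^E_{\cA}(\overline{G}) - q^D_{\cA}(\overline{G}) \ge \bigl(\tfrac12 + \Omega(1/n)\bigr) - \bigl(\tfrac12 + o(1/n)\bigr) = \Omega(1/n)$, giving the claim with that $\delta$.

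The main obstacle is the bookkeeping in the third paragraph: making sure the "profit" $e_G(A,B) - e(G)/2$ genuinely exceeds the "losses" $2s + 2j + n/4$ by a linear amount for the entire range $1 < c \le 2$. This forces two things — (i) using the sharp asymptotics for the core's excess from Lemma~\ref{lem.FK} rather than a crude bound, and (ii) controlling the number of core components $j$ by $o(n)$ (via the standard fact that $G_{n,c/n}$ has only $O(1)$ complex/unicyclic components besides the giant), since the trivial bound $j \le n$ is far too weak. Once the single numerical inequality "$e(G)/2 - 2\cdot(\text{excess of core}) > n/4$, asymptotically, for $1 < c \le 2$" is verified, the rest is routine assembly from the lemmas already proved.
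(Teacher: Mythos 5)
Your overall strategy is exactly the paper's: use Lemma~\ref{lem.FK} to bound the excess of the core of $G\sim G_{n,c/n}$, feed that (together with the max-degree, isolated-vertex and component-count facts) into Lemma~\ref{lem.excess} to get a balanced bipartition with few non-crossing edges of $G$, handle the degree tax of $\overline{G}$ with Lemma~\ref{lem.qD}, and convert the crossing-edge count into an edge-contribution bound for $\overline{G}$. All of that is right, including your observations that $j$ must be $o(n)$ (one complex component plus $o(n)$ unicyclic components suffices; the paper uses exactly this) and that $\delta\to 0$ as $c\downarrow 1$.

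However, the decisive bookkeeping step contains an error that makes your final inequality false for part of the range. The exact identity is
\[
\frac{m}{2}-e_{\overline{G}}(A,B)\;=\;e_G(A,B)-\frac{e(G)}{2}-\frac{n}{4}+O(1),
\]
with no $s$ or $j$ appearing and with $-n/4$ rather than $+n/4$; the losses $s+j+d/\rho$ enter only once, when you substitute the lower bound $e_G(A,B)\ge e(G)-s-j-d/\rho$, giving $\frac{e(G)}{2}-s-j-\frac{n}{4}-O(d/\rho)$. Your version double-counts them and arrives at the condition $e(G)/2-2s-2j>n/4$, i.e.\ asymptotically $\frac{c-1}{4}>(1-\frac{x}{c})(c+x-2)$. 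That inequality is \emph{false} near $c=2$ (at $c=2$, $x(2)\approx 0.406$ gives right-hand side $\approx 0.32$ versus left-hand side $0.25$), so the verification you propose would fail there. The correct condition is $\frac{c-1}{4}>\frac12(1-\frac{x}{c})(c+x-2)$, equivalently $\delta=\frac32-\frac{c}{2}-\frac{x(2-x)}{c}=\frac{1}{2c}\bigl((c-1)(2-c)+2(x-1)^2\bigr)>0$, which does hold on all of $(1,2]$ (and is why the hypothesis $c\le 2$ is needed: for $c>2$ the term $(c-1)(2-c)$ goes negative). Redo the third paragraph with the corrected identity and the argument closes as in the paper.
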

\begin{proof}
Let $x=x(c)$ as above. We shall see that we may take
\begin{equation} \label{eqn.delta}
\delta = \tfrac{1}{2c}((c-1)(2-c)+2(x-1)^2).
\end{equation}
It is easy to handle the degree tax for any balanced bipartition $\cA$.
Since $e(G) = cn/2 +o(n) = o(n^{3/2})$ whp, by Lemma~\ref{lem.qD} we have $q_{\cA}^D(\overline{G})= \tfrac12+o(1/n)$ whp.  We need to work to find $\cA$ with sufficiently high edge-contribution $q_{\cA}^E(\overline{G})$.

By Lemma~\ref{lem.FK}, the core of $G$ whp has $(1-x) (1-x/c)n +o(n)$ vertices and $(1-x/c)^2 cn/2 +o(n)$ edges. So whp the excess of the core is
\begin{eqnarray*}
(1-\tfrac{x}{c})^2 cn/2 - (1-x) (1-\tfrac{x}{c})n +o(n)
& = &
(1-\tfrac{x}{c})(\tfrac{c}{2} - \tfrac{x}{2} -(1-x))n +o(n)\\
& = &
\tfrac12 (1-\tfrac{x}{c})(c + x -2)n +o(n).
\end{eqnarray*}
(It is easy to check that $c+x>2$.) Also, whp the maximum degree in $G$ is $O(\log n / \log\log n)$, there is exactly one complex component (with more than one cycle), there are $o(n)$ (actually many fewer) unicyclic components, and there are $\Omega(n)$ isolated vertices~\cite{frieze2015book}. Thus, by Lemma~\ref{lem.excess}, whp there is a balanced bipartition $(A,B)$ of $V(G)$ such that 
\[e(G) -  e_G(A,B) \leq \tfrac12 (1-\tfrac{x}{c})(c + x -2)n +o(n).\]
But $e(G) = \tfrac12 cn +o(n)$ whp, so whp
\begin{eqnarray*}
e_G(A,B)  & \geq &
\tfrac12 cn -  \tfrac12 (1-\tfrac{x}{c})(c + x -2)n +o(n)\\
& = &
\tfrac{n}{2} ( c - (c+x-2 -x-\tfrac{x^2}{c} + \tfrac{2x}{c})) +o(n)\\
& = &
( 1- \tfrac{x(2-x)}{2c})\, n +o(n).
\end{eqnarray*}
Since $|A| |B| \leq n^2/4$, we may see that
\begin{eqnarray*}
q_{\cA}^E(\overline{G}) & \geq &
1 - \frac{\frac{n^2}{4} - e_G(A,B)}{e(\overline{G})}\\
&=&
1- \frac{\frac12 e(\overline{G}) +  (\frac{n^2}{4} - \frac12 e(\overline{G})   - e_G(A,B))}{e(\overline{G})}\\
&=&
\tfrac1{2} - \frac{ \frac{n^2}{4} - \frac12 \binom{n}{2} - (e_G(A,B) - \frac1{2} e(G))}{e(\overline{G})}\\
&=&
\tfrac12 +  \frac{e_G(A,B) - \frac12 e(G)- \frac14n}{e(\overline{G})}.
\end{eqnarray*}
But by the above, whp
\[ e_G(A,B) - \tfrac12 e(G)- \tfrac14 n \geq \delta n/2 + o(n) \]
where
\[ \delta =  2- \tfrac{x(2-x)}{c}  - \tfrac12 (1+c) = \tfrac32 - \tfrac{x(2-x)}{c} - \tfrac12 c.\]
Thus $q_{\cA}^E(\overline{G}) \geq  \frac12 + \tfrac{(\delta/2) n}{e(\overline{G})} + o(1/n)$.
Hence, recalling that $q_{\cA}^D(\overline{G})= \frac12+o(1/n)$, we have
\[ q_{\cA}(\overline{G}) \geq \tfrac{(\delta/2) n}{e(\overline{G})} + o(1/n) = \tfrac{\delta}{n} + o(1/n) \mbox{ whp}.\]
We may rewrite $\delta$ as
\[ \delta = \tfrac1{2c} (3c-2x(2-x)-c^2) = \tfrac{1}{2c}((c-1)(2-c)+2(x-1)^2)\]
as in~(\ref{eqn.delta}),
so clearly $\delta>0$ since $1<c \leq 2$; and this completes the proof.
\end{proof}

Observe from~(\ref{eqn.delta}) that if $c_1=1+\eps$ with $\eps$ small, then $\delta = \eps/2 + O(\eps^2)$ as $\eps \to 0$.

The next lemma uses Lemma~\ref{lem.bipdelta} and completes the proof of the lower bound in part (c) of Theorem~\ref{thm.Gnp}. Given a  graph $G$ and $0 \leq p \leq 1$, let the \emph{percolated} random graph $G_p$ be the subgraph of $G$ obtained by considering each edge independently and keeping it with probability~$p$ (and otherwise deleting it).  Thus the binomial random graph $\Gnp$ could be written as $(K_n)_p$, where $K_n$ is the $n$-vertex complete graph. 

\smallskip

\begin{lemma} \label{lem.thin}
Let $1< c \leq 2$ and let $\delta>0$ be as in~\eqref{eqn.delta}. Let the probability $p=p(n)$ satisfy $p \to 1$ as $n \to \infty$ and $p \leq 1-c/n$ for $n$ sufficiently large. Then $\q(\Gnp) \geq \delta/n +o(1/n)$ whp.
\end{lemma}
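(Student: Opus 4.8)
The plan is to transfer the conclusion of Lemma~\ref{lem.bipdelta}, which handles the case when the complement is $G_{n,c/n}$ with $1<c\leq 2$, to the general regime $p\leq 1-c/n$ with $c>1$ (and $p\to 1$). The key observation is that the complement $\overline{\Gnp}$ is itself a binomial random graph $G_{n,1-p}$, whose expected average degree is $(1-p)(n-1)\geq c(1-o(1))$, which may be much larger than $2$. So the first step is to reduce to the sparse complement case: I would couple $\overline{\Gnp}$ with a random graph on the same vertex set having a smaller edge probability. Concretely, fix a constant $c'$ with $1<c'\leq\min\{c,2\}$ (so that Lemma~\ref{lem.bipdelta} applies with parameter $c'$, giving the associated $\delta'>0$ from~\eqref{eqn.delta}); then since $1-p\geq c/n\geq c'/n$ (for $n$ large), we may write $\overline{\Gnp}$ as the edge-union of an independent copy of $G_{n,c'/n}$ and some further random edges. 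Equivalently, letting $H\sim G_{n,c'/n}$, we can realize $\Gnp$ as $\overline{H}_{p'}$ for an appropriate $p'=p'(n)\to 1$, i.e.\ $\Gnp$ is obtained from $\overline{H}$ by keeping each edge independently with probability $p'$.

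The second step is to run the argument of Lemma~\ref{lem.bipdelta} on $H\sim G_{n,c'/n}$: whp there is a balanced bipartition $\cA=(A,B)$ of $V(H)$ with $q_{\cA}(\overline{H})\geq \delta'/n+o(1/n)$, where moreover the proof actually produces a bipartition with good edge-contribution, namely $e_H(A,B)\geq e(H)-O(n/\rho)$ style control that yields $q_{\cA}^E(\overline{H})\geq \tfrac12 + \tfrac{(\delta'/2)n}{e(\overline{H})}+o(1/n)$ while the degree tax is $\tfrac12+o(1/n)$ by Lemma~\ref{lem.qD}. The third step is to show this bipartition remains good after thinning: $\Gnp$ arises from $\overline{H}$ by deleting a $1-p'=o(1)$ fraction of edges in expectation. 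Here I would not appeal to the black-box sampling result (that needs average degree over $n$ tending to infinity, which is false in the complement), but instead track the same bipartition $\cA$ directly. Since $e(\overline{H})\sim \binom n2$, deleting roughly $(1-p')\binom n2$ edges uniformly at random changes $e_{\cA}(A,B)/e$ and $\sum\vol(A_i)^2/(4e^2)$ only by $o(1/n)$: each of $e(A,B)$, $e(A)$, $e(B)$, $\vol(A)$, $\vol(B)$ concentrates around $p'$ times its value in $\overline{H}$ with fluctuations $O(\sqrt{n\log n})=o(n)$ by Chernoff bounds, and the total edge count $e(\Gnp)$ likewise concentrates; feeding these into the explicit formula $q_{\cA}=q_{\cA}^E-q_{\cA}^D$ (using $e=\Theta(n^2)$) shows $q_{\cA}(\Gnp)\geq (\delta'/n)(1+o(1))+o(1/n)$. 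Setting $\delta:=\delta'$ (or, if one wants the sharper constant claimed, noting that as $c'\downarrow 1$ the bound degrades, so one simply records \emph{some} $\delta>0$, consistent with the statement) completes the proof.

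The main obstacle I anticipate is the bookkeeping in the thinning step: one must be careful that the bipartition $\cA$ is chosen \emph{before} revealing which edges of $\overline{H}$ survive into $\Gnp$, so that the survival of edges within and between the parts is a genuine independent binomial thinning and Chernoff applies. This is clean provided we order the randomness correctly: first expose $H\sim G_{n,c'/n}$ and fix $\cA$ as in Lemma~\ref{lem.bipdelta}; then independently thin $\overline H$ to get $\Gnp$. A secondary nuisance is that $\overline{H}$ is not literally $\overline{\Gnp}$ for a clean product coupling — one has $1-p = (c'/n) + (\text{remaining probability})$ only approximately, and the two-round exposure gives $\Gnp$ with the correct distribution only if $p' = (1-p - c'/n)/(1-c'/n)$, which indeed lies in $[0,1]$ and tends to $1$ since $1-p=o(1)$ and $1-p\geq c/n > c'/n$. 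Verifying $p'\to 1$ and $p'\in[0,1]$ is a one-line computation but is the hinge of the reduction, so I would state it explicitly up front.
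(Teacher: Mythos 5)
Your proposal follows essentially the same route as the paper: a two-round exposure realizing $\Gnp$ as a thinning $\overline{H}_{p'}$ of the complement of $H\sim G_{n,c'/n}$, an application of Lemma~\ref{lem.bipdelta} to $H$ to fix the bipartition $\cA$ \emph{before} the thinning randomness is revealed, and then concentration of the edge counts and volumes to show that $q_\cA$ changes by only $o(1/n)$. Two details need repair. First, your formula for the retention probability is actually the deletion probability: the correct value is $p'=p/(1-c'/n)$; your expression $(1-p-c'/n)/(1-c'/n)$ equals $1-p'$ and tends to $0$, not $1$. Second, the claimed fluctuation bound $O(\sqrt{n\log n})$ is wrong in general: the quantities being thinned ($e(A)$, $e(A,B)$, $\vol(A)$, etc.) are binomials with $\Theta(n^2)$ trials and retention probability $p'$, so their standard deviation is $\Theta(n\sqrt{1-p'})$, which can far exceed $\sqrt{n\log n}$ (e.g.\ when $1-p=n^{-1/2}$); moreover a Chernoff bound with a $\sqrt{\log n}$ safety factor gives deviations $o(n)$ only when $(1-p')\log n\to 0$, which need not hold in the permitted range $c/n\le 1-p=o(1)$. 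What saves the step --- and what the paper does --- is that $1-p'=o(1)$, so Chebyshev with a factor $\omega\to\infty$ chosen slowly enough that $\omega\sqrt{1-p'}\to 0$ yields fluctuations $o(n)$, i.e.\ relative error $o(1/n)$ for these $\Theta(n^2)$ quantities, which is exactly what is needed. With these two corrections your argument matches the paper's proof.
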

\begin{proof}
Consider large $n$.  Let $\rho=\rho(n)=p/(1-c/n)$.  We may form $\Gnp$ from $K_n$ in two steps.  First delete edges independently with probability $c/n$ to give $G_{n,1-c/n}$, and then keep the remaining edges independently with probability $\rho$. 
By Lemma~\ref{lem.bipdelta} there is a balanced bipartition $\cA$ of $[n]$ such that whp $q_{\cA}(G_{n,1-c/n}) \geq \delta/n +o(1/n)$.  Note that also $e(G_{n,1-c/n}) \geq \binom{n}{2} - cn$ whp.

Let $H$ be a graph on $[n]$ with $e(H) \geq \binom{n}{2} - cn$, and suppose that $\cA$ is a balanced bipartition of $[n]$ with $q_{\cA}(H) \geq \delta/n + o(1/n)$. It suffices to show that the random subgraph $H_{\rho}$ of $H$ must satisfy $q_{\cA}(H_{\rho}) \geq \delta/n +o(1/n)$ whp.
Note that $\rho \to 1$ as $n \to \infty$: let $\omega = \omega(n) \to \infty$ as $n \to \infty$ sufficiently slowly that $\omega \, (1- \rho)^{1/2} \to 0$.

Consider $e(H_{\rho})$, which has the binomial distribution $\Bin(e(H),\rho)$. By Chebyshev's inequality
\[ \pr\left(|e(H_{\rho})-e(H)\rho| \geq x (e(H) \rho(1-\rho))^{1/2}\right) \leq x^{-2}.\]
Thus whp
\begin{eqnarray*}
  e(H_{\rho}) 
&= &
  e(H) \rho + O(\omega \, (e(H) \rho (1-\rho))^{1/2})\\
&=&
  e(H) \rho \, (1+ O((\omega\, ((1-\rho)/e(H))^{1/2})\\
&=&
  e(H) \rho \, (1+ O(\omega\, (1-\rho)^{1/2}/n)),
\end{eqnarray*}
and so
\begin{equation} \label{eqn.eHp}
    e(H_{\rho})  = e(H) \rho \, (1+ o(1/n)) \;\; \mbox{ whp}.
\end{equation}
For a graph $H'$ on $[n]$ let $e^{\rm int}(H')=e^{\rm int}_\cA(H')$ denote the number of internal edges within the parts of $\cA$.
Observe that $e^{\rm int}(H)= (1+o(1)) n^2/4$, since $\cA$ is a balanced bipartition and at most $cn=o(n^2)$ edges are missing from $H$.  
Thus, much as above, we have
\[ e^{\rm int}(H_{\rho}) =  e^{\rm int}(H) \rho  \, (1+ o(1/n)) \;\; \mbox{ whp}.\]
Hence
\begin{equation} \label{eqn.qEHp}
 q_{\cA}^E (H_{\rho}) = 
\frac{e^{\rm int}(H_{\rho})}{e(H_{\rho})} =  q_{\cA}^E (H) (1+ o(1/n)) = q_{\cA}^E (H) + o(1/n) \; \mbox{ whp}.
\end{equation}

The degree tax is only a little more complicated.  Let the parts of $\cA$ be $A$ and $B$, and let $X= \vol_{H_{\rho}}(A)$.  Suppose that $H$ has $i$ edges within $A$ and $j$ edges between $A$ and $B$, so $\vol_H(A)=2i+j$. Then $X$ has variance
\[ \var(X) = (4i+j) \rho(1-\rho) \leq 2 \vol_H(A)\rho (1-\rho).\]
Hence, by Chebyshev's inequality as before, whp
\begin{eqnarray*}
  X & = & 
  \vol_H(A) \rho + O(\omega (\vol_H(A)\rho(1-\rho))^{1/2})\\
&=&
  \vol_H(A) \rho \, (1+ O((\omega ((1-\rho)/\vol_H(A))^{1/2})\\
&=&
  \vol_H(A) \rho \, (1+ O(\omega (1-\rho)^{1/2}/n)),
\end{eqnarray*}
since $\vol_H(A) = \Theta(n^2)$.
Thus
$\vol_{H_{\rho}}(A) = \vol_H(A) \rho \, (1+ o(1/n))$ whp; and similarly
$\vol_{H_{\rho}}(B) = \vol_H(B) \rho \, (1+ o(1/n))$ whp.  Hence, using also~(\ref{eqn.eHp}), whp
\begin{eqnarray*}
q_{\cA}^D(H_{\rho}) &=&
(\vol_{H_{\rho}}(A)^2 + \vol_{H_{\rho}}(B)^2)/\vol(H_{\rho})^2\\
&=&
q_{\cA}^D(H) (1+o(1/n)) \; = \; q_{\cA}^D(H) +o(1/n).
\end{eqnarray*} 
But now, using also~(\ref{eqn.qEHp}), whp
\begin{eqnarray*}
q_{\cA}(H_{\rho}) &=& q_{\cA}^E(H_{\rho}) - q_{\cA}^D(H_{\rho}) \; =\; q_{\cA}^E(H) - q_{\cA}^D(H) + o(1/n)\\
&=&
q_\cA(H) + o(1/n) \; \geq \; \delta/n + o(1/n)\,,
\end{eqnarray*}
which completes the proof of Lemma~\ref{lem.thin}.
\end{proof}

It remains in this subsection to prove the upper bound in Theorem~\ref{thm.Gnp} (c).
Let $c>0$ and let $p \geq 1-c/n$. Then whp $e(G_{n,p}) \geq \binom{n}{2} - (c+o(1)) n/2$, and so
$\binom{n}{2} - n/2-e(G_{n,p}) \leq (c -1 +o(1))\, n/2$.  Hence by Lemma~\ref{lem.qleq}
\[ \q(G_{n,p}) \leq \frac{(c-1+o(1))\, n}{e(G_{n,p})}  = 2(c-1+o(1))/n
\;\;\; \mbox{whp}.
\]
Thus we may set $\beta= 2 c_2$ to complete the proof.  (Indeed, given any $\eps>0$, we may set $\beta= 2(c_2-1) + \eps$, which shows that we may take $\beta$ small if $c_2$ is not much bigger than 1.)


\subsection{Proof of Theorem~\ref{thm.from-modER}}

 By Theorem~4.1 of~\cite{ERmod} there exist $\alpha_1>0$ and $c_0$ such that, if $p=p(n)$ satisfies $1/n \leq p \leq 1-c_0/n$ for $n$ sufficiently large, then $\q(G_{n,p}) \geq \alpha_1 \sqrt{\frac{1-p}{np}}$ whp.  We shall choose $0<\alpha \leq \alpha_1$, so we need not consider this range of~$p$ further.  We may assume that $c_0 \geq 2$, say.

Now let $\eps>0$, and let $p$ satisfy $1- c_0/n \leq p \leq 1-1/n - \eps/n$.  Then
\[\sqrt{\tfrac{1-p}{np}} \leq \sqrt{\tfrac{c_0/n}{n(1-c_0/n)}} = (\sqrt{c_0}+o(1/n)) /n.\]
By Theorem~\ref{thm.Gnp} there exists $\alpha_2>0$ such that $\q(G_{n,p}) \geq \alpha_2/n$ whp. Let $\alpha>0$ satisfy $\alpha \leq \alpha_1$ and $\alpha < \alpha_2 /\sqrt{c_0}$.  Then $\alpha_2/n \geq \alpha \sqrt{\frac{1-p}{np}}$ for $n$ sufficiently large, so $\q(G_{n,p}) \geq \alpha \sqrt{\frac{1-p}{np}}$ whp, as required.



\needspace{6\baselineskip}
\section{Variant forms of degree tax and when modularity is zero}
\label{sec.mod_var}

Let $G$ be a graph with $m \geq 1$ edges, and let $\cA$ be a partition of $V(G)$. In the definition of the modularity score $q_\cA(G)$, the degree tax is based on the model of a random pseudograph $R$ which is approximately uniformly distributed over the pseudographs on $V(G)$ with the same degrees as $G$, where $R$ is generated by the configuration model. Note that $R$ may have loops and multiple edges.
As we noted earlier, the expected number of edges in $R$ between distinct vertices $i$ and $j$ is $d_i d_j/(2m-1)$. 
If we do not worry too much about loops, this yields 
\begin{equation}
\label{eqn.Eint}
\E[e^{\rm int}_{\cA} (R)]/m  \approx \sum_A \sum_{i,j \in A} d_i d_j /(4m^2) =  (1/4m^2) \sum_A \vol(A)^2 =  q_{\cA}^D(G),
\end{equation} 
and so
\begin{equation}
\label{eqn.Eint2} q_\cA(G) = q_{\cA}^E(G) - q_{\cA}^D(G)  \approx (1/m) \left(e^{\rm int}_{\cA} (G) - \E[e^{\rm int}_{\cA} (R)] \right). \end{equation}
This approximation motivated the original definition of modularity~\cite{NewmanGirvan}.

A referee of an earlier version of this paper commented that allowing loops in the random pseudograph $R$ leads to bias in particular against partitions with small parts, and questioned how changing the definition of $R$ to exclude loops would change our results on very dense graphs.
In this (new) section we consider five natural variants of the Newman--Girvan definition of modularity, and we investigate which graphs have corresponding modularity value~0. 
Recall that `graph' here always means `simple graph'.

Given a graph~$G$ and a vertex-partition $\cA$, each modularity variant uses the original edge contribution $q_{\cA}^E(G)$, and has its own penalty (corresponding to the degree-tax); and the modularity of~$G$ is the maximum of the scores for each vertex-partition.
In each case, the modularity value lies in~$[0,1)$;
there are graphs with modularity 0 (for example, complete graphs); there are graphs with modularity arbitrarily close to 1 (for example, disjoint unions of many $k$-cliques for any $k \geq 2$); and isolated vertices are irrelevant, except for modified modularity and Erd\H{o}s-R\'enyi modularity.

In Subsection~\ref{subsec.variants} we define (precise) `configuration modularity' $cq$ (where $R$ is distributed exactly according to the configuration model),
`simple modularity' (or `exact modularity') $sq$ (where $R$ is simple),
`modified modularity' $\tilde{q}$ (a `local' version of simple modularity), 
and `loop-free modularity'~$l\!fq$  (where $R$ is loop-free), and finally we recall the well-established Erd\H{o}s-R\'enyi modularity $ERq$. We note that simple/exact modularity is considered independently in the recent paper~\cite{chang2025modularity}, and modified modularity is introduced in that paper. 
Corresponding to $f^{\rm q}(n)$, we let $f^{\rm cq}(n)$ be the least number of edges missing from the complete graph $K_n$ in a graph with strictly positive configuration modularity; and similarly for the other variants.  We characterise those graphs~$G$ with simple modularity value $sq^*(G)=0$, those with modified modularity value  $\tq(G)=0$,
and those with loop-free modularity value $l\!fq^*(G)=0$; and these results yield easily the values $f^{\rm sq}(n)$, $f^{\rm \tilde{q}}(n)$, and~$f^{\rm lfq}(n)$.
In Subsection~\ref{subsec.edit},
we complete the story concerning these values by determining $f^{\rm cq}(n)$ and $f^{\rm ERq}(n)$. See also the table in Figure~\ref{fig.mod_variants} on page~\pageref{fig.mod_variants}.

\needspace{5\baselineskip}
\subsection{Definitions of modularity and natural variants}
\label{subsec.variants}

Calculating the left hand side of~(\ref{eqn.Eint}) precisely, we have 
\begin{eqnarray*}
 \E[e^{\rm int}_{\cA} (R)] /m
 &=&
 \frac1{m} \sum_A \bigg(\sum_{i \neq j \in A} \frac{d_i d_j}{2(2m-1)} + \sum_{i \in A}  \frac{d_i(d_i-1)}{2(2m-1)} \bigg)\\
 &=&
   \frac1{m} \sum_A \bigg(\sum_{i, j \in A} \frac{d_i d_j}{2(2m-1)} - \sum_{i \in A}  \frac{d_i}{2(2m-1)} \bigg)\\
  &=&
  \frac1{2m (2m-1)} \sum_A ( \vol^2(A) - \vol(A)).
  \end{eqnarray*}
From the last equation,
\begin{equation} \label{eq.cf_deg_tax}
\E[e^{\rm int}_{\cA} (R)] /m =
    \sum_{A\in A} {\textstyle \binom{\vol(A)}{2}/\binom{2m}{2} } :
\end{equation}
we shall use this observation in equation~\eqref{defn.cq} below.
Continuing from the previous equations, since $\sum_A \vol(A)=2m$ we have
\[ \E[e^{\rm int}_{\cA} (R)] /m =
 \frac{4m}{2(2m-1)} \bigg( \frac1{4m^2} \sum_A \vol^2(A) - \frac1{2m}  \bigg) =
\frac{2m}{2m-1} \,(q_{\cA}^D(G) - \frac1{2m}). \]
Thus 
\begin{equation} \label{eqn.R0}
\E[e^{\rm int}_{\cA} (R)] /m    = q_{\cA}^D(G) -    \frac1{2m-1} (1-q_{\cA}^D(G)).
\end{equation}
But $0<q_{\cA}^D(G)\leq 1$ for each vertex-partition $\cA$, and so 
\begin{equation} \label{eqn.R}
q_{\cA}^D(G) - \frac1{2m-1} <  \E[e^{\rm int}_{\cA} (R)] /m \leq q_{\cA}^D(G),
\end{equation}
with equality in the second inequality if and only if $\cA$ has a single part (ignoring isolated vertices), in which case both sides equal 1.
The inequalities~(\ref{eqn.R}) yield a more precise version of the approximation~(\ref{eqn.Eint2}), see also 
(\ref{defn.cq}) below.
\medskip

\begin{figure}[t] 
\centering
\footnotesize
$
\renewcommand{\arraystretch}{1.2}
\begin{array}{|l l | ll | lll | }
\cline{1-7}\mbox{\;\;\textbf{variant}} && \;\; \mbox{\textbf{degree tax}} && \;\; &\;\; f(n) & \;\;\;\;\;\;\;\;\;\;\;\;\;\;\;\;\;\;  \\
\cline{1-7}  &&&&&&\\
q_\cA & \mbox{(standard)} &{\displaystyle \sum_A} \vol(A)^2/(2m)^2& \mbox{Def}~\eqref{def.mod} &&\!\!\lfloor n/2 \rfloor+1\; & \mbox{Thm}~\ref{thm.detmod0} \\
&&&&&&\\
cq_\cA & \mbox{configuration} & 
 {\displaystyle \sum_A} \binom{\vol(A)}{2}/\binom{2m}{2}
& \mbox{eqn}~\eqref{defn.cq}
&&\!\! \lceil n/2 \rceil &\mbox{Prop}~\ref{prop.det_config_mod0}\\ 
&&&&&&\\
sq_\cA & \mbox{simple} / \mbox{exact} & \E[e^{\rm int}_\cA(S)] \; \mbox{where $S$ is simple}
&\mbox{eqn}~\eqref{defn.sq} && 2 & \mbox{eqn}~\eqref{eq:f_simp_mod}\\
&&&&&&\\
\tilde{q}_\cA & \mbox{modified} / \mbox{local} & 
\mbox{(see section on~page~\pageref{subsec.mod_mod})} 
&\mbox{eqn}~\eqref{defn.mq} && 2 & \mbox{eqn}~\eqref{eq:f_mod_mod}\\
&&&&&&\\
\lfqA & \mbox{loop-free} & 
\E[e^{\rm int}_\cA(R_0)] \; \mbox{where $R_0$ is loop-free} 
& \mbox{eqn}~\eqref{defn.lfq} && 1 & \mbox{eqn}~\eqref{eq:f_loop_free_mod}\\
&&&&&&\\
ERq_\cA & \mbox{Erd\H{o}s-R\'enyi} & \displaystyle \sum_{A}|A|^2/n^2 & \mbox{eqn}~\eqref{defn.ERq} && 2 & \mbox{Prop}~\ref{prop.det_ER_mod0}\\
&&&&&&\\ \cline{1-7} 
\end{array}$\caption{\small  Table showing how variations in the modularity definition would change the results in this paper. For each modularity variant, we write $f(n)$ for the least number of edges missing from any $n$-vertex graph~$G$ with positive modularity value, and list these values (note that we may for example need $n \geq 4$ or similar) along with a reference to where to find the result in the paper.}\label{fig.mod_variants}
\end{figure}

\needspace{6\baselineskip}
{\em 
Configuration modularity $cq^*$}

Suppose that we define the (precise) \emph{configuration modularity score} $\cqA(G)$
to be the right hand side of~(\ref{eqn.Eint2}) (without using the approximation~(\ref{eqn.Eint}), though still using the configuration model to generate $R$). Then, by~\eqref{eq.cf_deg_tax}, 
\begin{equation}\label{defn.cq} \cqA(G) = (1/m)  (e^{\rm int}_{\cA}(G) - \E[e^{\rm int}_{\cA} (R)]) = e^{\rm int}_{\cA}(G)/m - \sum_{A\in \cA} {\textstyle \binom{\vol(A)}{2}/\binom{2m}{2} },
\end{equation}
and thus
\begin{equation}\label{eq.cq_score_higher} \cqA(G) = q_\cA(G) + (1-q^D_\cA(G))/(2m-1) \end{equation}
by~(\ref{eqn.R0}).
{Observe that isolated vertices are irrelevant.}
Also let the (precise) \emph{configuration modularity} $\cq(G)$ of~$G$ be the maximum value of $\cqA(G)$ over all vertex-partitions~$\cA$. By~(\ref{eq.cq_score_higher}), we have
\begin{equation*} \label{eqn.mq1}
 q_{\cA}(G) \leq \cqA(G) < q_{\cA}(G) + \frac1{2m-1}\,, 
\end{equation*}
and so
\begin{equation} \label{eqn.mq2new}
q^*(G) \leq \cq(G) < q^*(G) \, + \frac1{2m-1}\,.
\end{equation} 
Also, if $G_m$ denotes the graph consisting of $m$ isolated edges, then $\q(G) \leq \q(G_m) = 1-1/m$ (see~\cite[Cor~1.4.2]{thesis}), and so $0 \leq \cq(G) <1$ for all graphs~$G$. (If $m=1$ then $\cq(G)=0$.)
Further, if $\q(G)>0$, or $q_{\cA}(G)=0$ for some partition~$\cA$ 
with at least two parts of positive volume,
then there is a $\q$-optimal partition $\cA$ with $q_\cA^{D}(G)<1$, and so $\cq(G) > \q(G)$ by equation~\eqref{eq.cq_score_higher}.
For example, consider the complete graph $K_n$ where $n$ is even, and let~$G$ be this graph less a perfect matching (so~$G$ is the complement of the $n$-vertex 1-regular graph).  Then $\q(G)=0$ by Theorem~\ref{thm.detmod0}, and we saw in the proof of the theorem that there is a bipartition $\cA$ with $q_{\cA}(G)=0$; and hence $\cq(G) \geq \cqA(G) > 0$. See Proposition~\ref{prop.det_config_mod0} for the edit-distance $f^{cq}(n)$ from $K_n$ to a graph with positive configuration modularity. 

\medskip

\needspace{6\baselineskip}
{\em Simple modularity $sq^*$}

As we saw above, in the standard definition of the modularity score $q_\cA(G)$, the degree tax `penalty' is based on the expected number of internal edges for a random pseudograph $R$ which is nearly uniformly distributed over the graphs on $V(G)$ with the same degrees as~$G$, as generated by the configuration model. 
What if we insist on the random pseudograph $R$ not having loops or multiple edges (that is, on $R$ being a simple graph), and on $R$ being \emph{exactly} uniformly distributed on the graphs with the same degrees as~$G$? 
For a vertex-partition $\cA$ of~$G$, define the corresponding
`simple modularity’ (or `exact modularity') score $sq_{\cA}(G)$  by
\begin{equation}\label{defn.sq} \sqA(G) = (1/m) \left(e^{\rm int}_{\cA} (G) - \E[e^{\rm int}_{\cA} (R)] \right)= \frac1m \sum_{i<j} \left( {\mathbf 1}_{ij\in E(G)} - \pr(ij \in E(R)) \right) \delta_{ij}(\cA) ,  \end{equation}
where the simple graph $R$ is uniformly distributed on the graphs on $V(G)$ with the same degrees as~$G$.
(Recall that $\delta_{ij}(\cA)$ is the indicator that vertices $i$ and $j$ are in the same part in $\cA$.)

Let $\sq(G)$ be the maximum value of $sq_\cA(G)$ over all vertex-partitions $\cA$, {and}
observe that $0 \leq \sq(G)<1$. 
This seems like a natural variant of the definition of modularity, though now there are no simple general formulae like the degree-tax to help us to calculate or approximate a modularity score $sq_\cA(G)$.

Simple\,/\,exact modularity $sq$ is introduced also in Chang and Van Mieghem~\cite{chang2025modularity}, where it is called exact modularity. That paper notes (as above) the lack of a general formula to calculate the modularity score $sq_\cA(G)$. (The same paper also introduces a simplified `local' version of $\sq$ called `modified modularity' $\tq$ -- which we discuss next, see page~\pageref{subsec.mod_mod}). 
To get a feel for simple/exact modularity $sq$, consider first the easy case when~$G$ is $r$-regular, so  $r n = 2m$.
By symmetry, each edge appears in $R$ with probability $p= m/\binom{n}{2} = r/(n-1)$, so 
\[ \E[e^{\rm int}_{\cA} (R)] = \sum_A \binom{|A|}{2} p  = \sum_A \binom{|A|}{2} r/(n-1).\] 
Hence
\[ \E[e^{\rm int}_{\cA} (R)]/m =  \,\sum_A \binom{|A|}{2} / \binom{n}{2}, \]
the proportion of all $\binom{n}{2}$ possible edges which are internal in $\cA$. 
Also
\[
\sum_A \binom{|A|}{2} / \binom{n}{2}= \sum_{A\in \cA} |A|^2/n^2 - \frac{1}{n-1} \Big(1-\sum_{A\in \cA} |A|^2/n^2\Big) = q_\cA^D(G) -  \frac{1}{n-1} (1- q_\cA^D(G)), \]
where $q_\cA^D(G) = \sum_A |A|^2/n^2$ is the usual degree tax (since~$G$ is regular).  Thus, for a regular graph~$G$ with a non-trivial vertex partition $\cA$,
\[
q_\cA^D(G) -\frac{1}{n-1} <  \E[e^{\rm int}_{\cA} (R)]/m < q_\cA^D(G).
\]
(For the `Erd\H{o}s-R\'enyi modularity', we always use $\sum_A |A|^2 / n^2$ as the degree-tax, see later in this section.)
For example, if~$G$ is the 4-cycle $C_4$  ($\fourcycle$ which is also $K_{2,2}$), then the set of labelled simple graphs with the same degree sequence is the set of graphs isomorphic to~$G$ (i.e. $\{\fourcycle$, $\fourcyclea$, $\fourcycleb \}$). Now if $\cA$ is a bipartition with $e^{\rm int}_{\cA}(G) =2$, then
\begin{equation}\label{eq.cycle_q_sq} q_\cA(G) = \frac24 - \frac{4^2+4^2}{8^2} =0 \;\; \mbox{ and } \;\; \sqA(G) = \frac{2}{4} - \frac26= \frac16 , \end{equation}
and both these values are optimal (that is, $\q(G)=0$ and $sq^*(G)= \frac16$).

For a non-regular example, let~$G$ be a 3-edge path ($\pfourb$).  If $\cA$ is the bipartition with $e^{\rm int}_{\cA}(G) =2$, then
\[ q_\cA(G) = \tfrac23 - \tfrac{3^2 + 3^2}{6^2} = \tfrac16 ;\]
and noting that there are exactly two (simple) labelled graphs with the same degrees as~$G$ (i.e.~$\pfourb$,~$\pfourc$), we may see that
\[\sqA(G) = \tfrac23 - \tfrac{\frac12(2+0)}{3}= \tfrac13.\]
Both these values are optimal (that is, $\q(G)=\frac16$ and $\sq(G)= \tq(G) = \frac13$).  

We have been interested in the graphs~$G$ for which $\q(G)=0$: but when is $\sq(G)=0$?
We say that a graph~$G$ is \emph{determined by its degrees} if~$G$ is the unique graph on $V(G)$ with the same vertex degrees as~$G$.
\begin{prop}\label{prop.sq}
For every graph~$G$, 
$\sq(G) =0$ if and only if~$G$ is determined by its degrees.
\end{prop}
\begin{proof}
If there is a unique graph on $V(G)$ with the degrees of~$G$, then $R$ must be $G$; hence $\E[e^{\rm int}_\cA(R)] = e^{\rm int}_\cA(G)$ for each vertex-partition $\cA$, and so $\sq(G)=0$.  
If $G$ is not determined by its degrees, then there must be an edge $vw$ of $G$ such that $vw$ is an edge in $R$ with probability~$<1$.
Let~$\cA$ be the partition of $V(G)$ with one part $\{v,w\}$ and each other part a singleton. Then $e^{\rm int}_\cA(G)=1$ and $\E[e^{\rm int}_\cA(R)]<1$, so $\sq(G) \geq \sqA(G)>0$.
\end{proof}
We can tell easily if a graph $G$ on $[n]$ is determined by its degrees.  This is always the case for $n \leq 3$. 
Let $n \geq 4$ and let $G$ be a graph on $[n]$.
Let $a,b,c,d$ 
be four distinct vertices of $G$; and suppose that $ab$ and $cd$ 
are edges, and $bc$ and $ad$ are non-edges. (Thus $a,b,c,d$
form an `alternating 4-cycle' -- edge, non-edge, edge, non-edge.) The corresponding 2-\emph{switch} is the replacement of edges $ab$ and $cd$ by new edges $bc$ and $ad$. 
By a theorem of Berge (1973) (see for example Theorem~1.3.33 in~\cite{west2001introduction})
the degrees of $G$ determine $G$ if and only if $G$ has no 2-switches.  Observe also that $G$ has a 2-switch if and only if the complementary graph $\overline{G}$ has a 2-switch.
We can say more.
\needspace{4\baselineskip}
\begin{thm} \label{thm.detbydeg}
For every graph $G$ with at least one edge, the following are equivalent.
\begin{description}
\item{(a)}  $G$ is determined by its degrees.
\item{(b)} $G$ has no 2-switch.
\item{(c)} Repeatedly deleting an isolated or universal vertex destroys $G$ (leaving nothing). 
\end{description}
\end{thm}
It is straightforward to prove this result directly, without using Berge's result; and we do so now.
\begin{proof}
(a)$\implies$(b). Performing a 2-switch on $G$ yields a distinct graph $G'$ with the same degrees as $G$, so this part of Berge's result is immediate.

(b)$\implies$(c).
Suppose that $\delta(G), \delta(\overline{G}) \geq 1$.  It suffices to show that $G$ has a 2-switch.
Suppose wlog that $\delta:= \delta(G) \leq \delta(\overline{G})$.  Let vertex $a$ have degree $\delta$ in $G$.  Fix a neighbour $b$ of $a$ in $G$, and let $Z=N_G(a) \setminus \{b\}$. 
Then $b$ has at least $\delta = |Z|+1$ neighbours in $\overline{G}$, so in $\overline{G}$ vertex $b$ has a neighbour $c \not\in Z$.
Note that vertices $a,b,c$ are distinct, $ab \in E(G)$, and $ac,bc \in E(\overline{G})$.
Vertex $c$ has degree in~$G$ at least $\delta = |Z|+1$ so $c$ has a neighbour $d \not\in Z$ in $G$.  Now $d \not\in \{a,b,c\}$ and $ad \in E(\overline{G})$, so we have a 2-switch, as required.

(c)$\implies$(a).  This is trivial if $v(G) \leq 3$.  Let $k \geq 3$ and suppose it holds for all graphs with $k$ vertices.
Let graph $G$ have $v(G) = k+1$, and let graph $H$ on $V(G)$ have the same degrees as $G$.  We must show that $H=G$.

Let vertex $v$ in $G$ be isolated or universal, let $G'=G -v$ and $H'=H-v$.
Consider a vertex $w$ in $V(G')=V(H')$: if $v$ was isolated in $G$ then in $G'$ and $H'$, each vertex $w$ has degree $d_G(w)$; and if $v$ was universal in $G$ then in $G'$ and $H'$, each vertex $w$ has degree $d_G(w)-1$.  Thus $H'$ has the same degrees as $G'$, so $H'=G'$ by the induction hypothesis, and so $H=G$ as required.
\end{proof}
We may read off various results from Proposition~\ref{prop.sq} and Theorem~\ref{thm.detbydeg}.
Recall that $f^{\rm sq}(n)$ is the minimum number of edges that may be deleted from $K_n$ to form a graph $G$ with $s\q(G)>0$.
If~$G$ is the complete graph $K_n$, or~$G$ is $K_n$ less one edge, then~$G$ is determined by its degrees, so $\sq(G)=0$.
If~$G$ is $K_n$ less two disjoint edges, then~$G$ has a 2-switch and so is not determined by its degrees, and $\sq(G)>0$ (for $n\geq 4$). 
Hence
\begin{equation}\label{eq:f_simp_mod}
f^{\rm sq}(n)=2 \;\;\; \mbox{for each }n\geq 4\,. 
\end{equation}
If~$G$ is a complete multipartite  graph with at least $2$ parts of size at least $2$, then again~$G$ has a $2$-switch, so $G$ is not determined by its degrees, and thus $\sq(G) >0$  (whereas $\q(G)=0$ in contrast). If $G$ is $K_n$ less all the edges in a set $A \subset [n]$, or if $G$ is $K_n$ less the edges of a star, then $G$ is determined by its degrees (for example, part (c) of Theorem~\ref{thm.detbydeg} clearly holds) so $\sq(G)=0$.

Finally here, let us observe some features of $\sq$ not shared by $\q$, concerning a graph $G$ and its complement $\overline{G}$.
We saw above that if $G$ is $C_4$ then $\q(G)=0$; and the complementary graph $\overline{G}$ consists of two disjoint edges, so $\q(\overline{G})=\tfrac12$. But clearly
a graph $G$ is determined by its degrees if and only if  this holds for the complementary graph $\overline{G}$. 
Hence, by Proposition~\ref{prop.sq} 
\begin{equation} \label{eqn.sq0}
\sq(G) =0 \;\; \mbox{ if and only if } \;\; \sq(\overline{G})=0.
\end{equation}

For clarity, for $ij \in E(G)$ we shall write $ij \in G$, etc.
We may take $R(\overline{G})$ to be the complement $\overline{R(G)}$ of $R(G)$. 
Thus 
\[ \pr(ij \in R(G)) + \pr(ij \in R(\overline{G})) =1. \]
Hence
\begin{eqnarray*}
    sq_\cA(G) + sq_\cA(\overline{G}) &=&
    \frac1{m} \sum_{i<j} \left( {\mathbf 1}_{ij\in G} - \pr(ij \in R(G)) + {\mathbf 1}_{ij\in \overline{G}} - \pr(ij \in R(\overline{G})) \right) \delta_{ij}(\cA)\\
    &=&
    \frac1{m} \sum_{i<j} \left( 1-1 \right) \delta_{ij}(\cA) \;\; = \;\; 0\,.
\end{eqnarray*}
Thus, for every graph $G$ and vertex-partition $\cA$,
\begin{equation} \label{eqn.sqGbarG}
sq_\cA(G) + sq_\cA(\overline{G})=0.
\end{equation}

Now we may amplify~(\ref{eqn.sq0}) as follows.
\begin{prop}
For every graph $G$, the following four statements are equivalent:
(a) $\sq(G)=0$, (b) $\sq(\overline{G})=0$,
(c) $sq_\cA(G)=0$ for every vertex-partition $\cA$, (d) $sq_\cA(\overline{G})=0$ for every vertex-partition $\cA$.
\end{prop}

\medskip
\needspace{6\baselineskip}

\noindent
{\em Modified modularity $\tq$}\label{subsec.mod_mod}

Modified modularity may be considered to be a `local' version of simple modularity.
It was introduced in~\cite{chang2025modularity}, together with simple modularity.

For modified modularity, we consider the pairs of vertices $i<j$ in $V$ separately, and let
$\tilde{R}_{ij}$ be a random graph on $V$ picked uniformly from the set of all graphs with the same number of edges as~$G$, and with degrees $d_i$ and $d_j$ the same as in $G$. 
Then we set
\begin{equation}\label{defn.mq} \widetilde{q}_\cA(G) = \frac1m \sum_{i<j} \left( {\mathbf 1}_{ij\in E(G)} - \pr\big(ij \in E(\widetilde{R}_{ij})\big) \right) \delta_{ij}(\cA)
\end{equation}
(recall $\delta_{ij}(\cA)$ is the indicator that vertices $i$ and $j$ are in the same part in the partition $\cA$). Also, we let $\tq(G)$ be the maximum value of $\widetilde{q}_\cA(G)$ over all vertex partitions $\cA$, and note that $0 \leq \tq(G) <1$.
Observe that, to determine the `penalty term' for vertices $i,j$ in~(\ref{defn.mq}), we consider a graph chosen uniformly at random from those which \emph{locally} match the degrees of $G$, as well as matching the total number of edges. For a graph $G$ with vertex set $V$ and pair of distinct vertices $i,j$, we denote by $\cG(G,ij)$ the set of labelled simple graphs $G'$ with $V'=V$, $d_i'=d_i$, $d_j'=d_j$ and $m'=m$ (where $V'$, $d_i'$ and~$m'$ denote the vertex-set, degree of vertex $i$, and number of edges in $G'$ respectively), so that~$\tilde{R}_{ij}$ is chosen uniformly from $\cG(G,ij)$.

We again begin with some examples to get a feel for the definition.
First, suppose that $G$ is the 4-cycle $C_4$  ($\fourcycle$ with vertex labels $a,b,c,d$ clockwise from bottom--left).
The set of labelled simple graphs with $d_a=d_b=2$ and $m=4$ edges is $\cG(\fourcycle, ab)= \{\fourcycle$, $\fourcyclea$, $\fourcycleb$, $\pawd$, $\pawe \}$. Thus $\tilde{R}_{ab}$ is chosen uniformly from these five graphs, and the probability that $ab \in E(\widetilde{R}_{ab})$ is $\frac45$; and this is the same for all pairs of vertices by symmetry. Now let 
$\cA$ be the bipartition $\{\{a,b\},\{c,d\}\}$, with edges $ab$ and $cd$ as internal edges. Thus 
\[
\tilde{q}_\cA(G)  = \tfrac{1}{4}\Big( \big(1-\tfrac45\big) + \big(1-\tfrac45\big)\Big) = \tfrac1{10} >0,
\]
and this is optimal, so $\tq(G)=\frac{1}{10}$. (To check optimality, note that including any non--edge within a part would lead to a contribution of $-\frac15$.) Note that the modified modularity score $\widetilde{q}_\cA(G)$ for this partition is different from the scores for both usual modularity $q_\cA(G)=0$, and simple modularity $sq_\cA(G)=1/6$ both given in \eqref{eq.cycle_q_sq}.

We now consider a non-regular example -- see also Fig.~\ref{fig.pfour_and_pawb} (left) on~page~\pageref{fig.pfour_and_pawb}. Let $G$ be the 3-edge path on vertices $V\!=\!\{a,b,c,d\}$ with edges $ab, bc, cd$ (i.e.~$\pfourb$ labelled clockwise from bottom--left). Then~$\widetilde{R}_{ab}$ is chosen uniformly from the set of graphs on $V$ with $d_a=1$, $d_b=2$ and $m=3$. There are exactly four such graphs, i.e.  $\cG(\pfourb, ab)= \{ \pfourb$, $\pfourc$, $\pfourd$, $\pfoure \}$), and so the probability that $ab$ is an edge in $\widetilde{R}_{ab}$ is $\frac12$. Note that by symmetry, this is the same for $\widetilde{R}_{ac}$ (that is, the probability that $ac$ is an edge in $\widetilde{R}_{ac}$ is $\frac12$), for~$\widetilde{R}_{bd}$, and for $\widetilde{R}_{cd}$. 
Similarly, $\widetilde{R}_{bc}$ is chosen uniformly from the set of graphs on $V$ with $d_b=2$, $d_c=2$ and $m=3$ (i.e. $\{\pfourb$, $\pfourc$, $\cofourstar$, $\cofourstarb\}$), and each of these graphs have the edge~$bc$. Lastly $\widetilde{R}_{ad}$ is chosen uniformly from $\{ \pfourb$, $\pfourc \}$, and neither of these graphs have the edge $ad$.

We may now see that $ab$, $cd$ are the only pairs that can make a positive contribution in~(\ref{defn.mq}), and $ac$, $bd$ make a negative contribution; -- see also last column of Fig.~\ref{fig.pfour_and_pawb}. Hence, the bipartition $\cA = \{\{a,b\},\{c,d\}\}$ is the unique optimal partition for $\widetilde{q}$, and 
\begin{equation}\label{eq.pfour_mq}
\tq(G)=\tilde{q}_\cA(G) = \tfrac13\Big( (1 - \tfrac{1}{2}) + (1 - \tfrac{1}{2})\Big) = \tfrac13.\end{equation}
Recall that $q_\cA(G)=\frac16$ and $sq_\cA(G)= \frac13$; and that $\cA$ is an optimal partitions for both standard and simple modularities.

\begin{figure}
\centering
$
\renewcommand{\arraystretch}{1.2}
\begin{array}{||l l| c | c |  c || }
\multicolumn{5}{}{} \mbox{$G=\pfourb$}\\
\multicolumn{5}{}{} \\
\cline{1-5}
i,j && \cG(\pfourb, ij) 
& p_{ij} & w_{ij}\\ 
\cline{1-5} 
&&&&\\
a, b & \pfourLab \; &
\; \pfourb,\; \pfourc, \; \pfourd, \; \pfoure \;&
\tfrac{1}{2} & \;\tfrac{1}{2}\\ 
{\color{gray} c,d} & {\color{gray} \pfourLcdgray} &&
{\color{gray} \tfrac{1}{2}} & {\color{gray} \;\tfrac{1}{2}}\\
&&&&\\
b,c & \pfourLbc &
\pfourb, \;\pfourc, \;\cofourstar, \;\cofourstarb&
1 & \; 0\\
&&&&\\
a,c & \pfourLac & \pfourb, \; \pfourf, \; \pfourc, \; \pfourg&
\tfrac{1}{2}
& -\tfrac{1}{2} \\
{\color{gray} b,d} & \pfourLbdgray &&
{\color{gray} \tfrac{1}{2}} & {\color{gray} -\tfrac{1}{2}}\\
&&&&\\
a,d& \pfourLad & 
\pfourb, \pfourc &  
0 & \;0  \\
&&&&\\
\cline{1-5}				
\end{array}
$
\centering
\hspace{14mm}
$
\renewcommand{\arraystretch}{1.2}
\begin{array}{||l l| c | c |  c || }
\multicolumn{5}{}{} 
\mbox{$G=\pawb$}\\
\multicolumn{5}{}{} \\
\cline{1-5}
i,j && \cG(\pawb, ij) 
&p_{ij} &w_{ij}\\ 
\cline{1-5} 
&&&&\\
a, b & \pawbLab \; &
\pawb &
1 & 0  \\ 
&&&&\\
a,c & \pawbLac & 
\pawb,\; \pawh &
0 & 0  \\
{\color{gray} a,d} & \pawbLadgray &&
{\color{gray} 0} & {\color{gray}0}\\
&&&&\\
b,c & \pawbLbc &
\pawb,\; \paw&
1 & 0 \\
{\color{gray} b,d} & {\color{gray} \pawbLbdgray} &&
{\color{gray} 1} & {\color{gray}0}\\
&&&&\\
c,d& \pawbLcd & 
\; \pawb, \; \pawc, \; \fourcycle, \; \fourcyclea, \; \fourcycleb \;& 
\tfrac{4}{5} &\tfrac{1}{5}  \\
&&&&\\
\cline{1-5}				
\end{array}
$
\caption{Figure showing calculation details for the modified modularity score of the $3$-edge path (left), and the graph consisting of a triangle with a pendant edge (right), where for both graphs the vertex sets are labelled clockwise from bottom--left. 
Here, $p_{ij}=\pr(ij \in E(\widetilde{R}_{ij}))$ and $w_{ij}= {\mathbf 1}_{ij\in E(G)} - \pr(ij \in E(\widetilde{R}_{ij}))$. 
See also the discussions preceding~\eqref{eq.pfour_mq} and~\eqref{eq.pawb_mq}. }
\label{fig.pfour_and_pawb}
\end{figure}
\needspace{6\baselineskip}
When do we have $\tq(G)=0$? We say that the graph $G$ is \emph{locally edge-determined by its degrees} if, for every edge $ij$ in $G$, every graph $G'$ in $\cG(G,ij)$ contains the edge $ij$. {(The set $\cG(G,ij)$ is defined just below~\eqref{defn.mq}.)
The following result shows that we can tell easily if $\tq(G) =0$.  
\needspace{6\baselineskip}
\begin{thm} \label{thm.locdetbydeg}
For every graph $G$ with at least one edge, the following are equivalent.\begin{description} \vspace{-3.5mm}
\item{(a)} $\tq(G) =0$.
\item{(b)} $G$ is locally edge-determined by its degrees.
\item{(c)} Either every edge of $G$ is incident with a universal vertex, or $G$ less any isolated vertices is a triangle or a star.
\end{description}
\end{thm}

From Theorem~\ref{thm.locdetbydeg} we may quickly determine $f^{\tilde{q}}$, and we do so now before proceeding with the proof of the theorem. Let $n \geq 4$. If $G$ is a complete graph $K_n$ or $K_n$ less one edge, then every edge of $G$ is incident to a universal vertex, so $\tq(G)=0$. 
If $G$ is $K_n$ less any two edges, then some edge is not incident to any universal vertex, $G$ has no isolated vertices, and $G$ is not a triangle or a star, and so $\tq(G)>0$ by Theorem~\ref{thm.locdetbydeg}. Hence \begin{equation}\label{eq:f_mod_mod} 
f^{\tilde{q}}(n)=2 \;\;\; \mbox{ for each } n \geq 4.
\end{equation}

\begin{proof}[Proof of Theorem~\ref{thm.locdetbydeg}.]
Let $G$ have $m$ edges.
Note first that for all distinct vertices $i, j$ in $G$ we have $d_i + d_j \leq m+1$ (since otherwise $G$ would have more than $m$ edges).  Also, if $d_i + d_j = m+1$ then~$G$ and every other graph in $\cG(G,ij)$ must contain the edge $ij$.  We shall show that $(b) \implies (a)$, then $(c) \implies (b)$ and finally $(a) \implies (c)$.

$(b) \implies (a)$. 
Let $G$ be locally edge-determined by its degrees. First note that for any graph, the singletons partition which places each vertex in a separate part has modified modularity score $0$,
so it suffices to prove that $\tq(G)\leq 0$.
For each edge $ij$ of $G$, the random graph $\widetilde{R}_{ij}$ must contain the edge $ij$, that is $\pr(ij \in E(\widetilde{R}_{ij}))=1$.  Thus, whatever the partition $\cA$, if there is a corresponding summand in~(\ref{defn.mq}) it is $0$. Also, any summand corresponding to a non-edge is $\leq 0$ and so $\tilde{q}_\cA(G) \leq 0$.  Thus $\tq(G)=0$, as required.

$(c) \implies (b)$.  If an edge $ij$ of $G$ is incident with a universal vertex, then it must be in every graph in $\cG(G,ij)$.  Thus if each edge of $G$ is incident with a universal vertex, then $G$ is locally determined by its degrees.

Now let $G^-$ be $G$ less any isolated vertices, and suppose that $G^-$ is a triangle or a star.
It remains to show that $G$ is locally determined by its degrees.  If $ij$ is an edge in $G$, then $d_i+d_j = m+1$, so each graph in $\cG(G,ij)$ contains the edge $ij$.  Hence, again $G$ is locally determined by its degrees, as required.

It remains to show that $(a) \implies (c)$.
Assume that (a) holds, let $G^-$ be $G$ less any isolated vertices, and assume for a contradiction that $G^-$ is not a triangle or a star, and $ij$ is an edge in $G$ with neither $i$ nor $j$ universal.  Suppose that $d_i + d_j \leq m$. Then there is a graph $G'$ in $\cG(G,ij)$ with no edge $ij$, so the partition $\cA$ with one part $\{i,j\}$ and each other part a singleton has $\tilde{q}_\cA(G)>0$, and thus (a) fails. Thus we must have $d_i + d_j = m+1$. 
  
We may assume wlog that $d_i \geq d_j$.  If $d_j=1$ then $d_i=m$, so $G^-$ is a star, contradicting our assumptions.  Hence $d_j \geq 2$, and there is an edge $jh$ where $h \neq i$. Now $h$ can be adjacent only to $i$ and $j$, so $d_h \leq 2$.  Suppose that $m \geq 4$.  Then $d_j+d_h \leq (m+1)/2 + 2 = m + (5-m)/2 \leq m + 1/2$, and so $d_j+d_h \leq m$.  This gives a contradiction as above, and thus completes the proof of the case $m \geq 4$.
Finally suppose that $m \leq 3$.  If $G^-$ is not a triangle or a star, then either $G^-$  is the 3-edge path (i.e. $\threepathb$) and $\tq(G) = 1/3$ (as we saw above),
or $G$ has edges $uv$ and $u'v'$ in distinct components
(so there is an alternating cycle $C_4$).
In the latter case, let $\cA$ be the vertex-partition with parts $\{u,v\}$ and $\{u',v'\}$ and all other parts singletons.
By considering the graph with edges $uv$ and $u'v'$ replaced by $uu'$ and $vv'$ we see that $\tilde{q}_\cA(G)>0$.
This contradiction completes the proof that $(a) \implies (c)$, and thus completes the entire proof.
\end{proof}

Observe that if a graph $G$ is locally edge-determined by its degrees, then every graph with all the same degrees as $G$ contains all the edges of $G$, and so it is $G$: thus $G$ is determined by its degrees. Hence, by Theorem~\ref{thm.locdetbydeg} and Proposition~\ref{prop.sq}, $\tq(G)=0$ implies $\sq(G)=0$.  
However, there are graphs $G$ with $\sq(G)=0$ but such that $\tq(G)>0$. For example, let the $4$-edge graph $G$ on $\{a,b,c,d\}$ have leaf vertex $a$ adjacent to vertex $b$ in the triangle $b,c,d$ ($\pawb$ labelled clockwise from bottom--left) -- see also Fig.~\ref{fig.pfour_and_pawb} (right).
Observe that $G$ is the unique graph with vertex degrees $1,3,2,2$ (and so $\sq(G)=0$).  But consider vertices $c,d$ and $\widetilde{R}_{cd}$.  Given that there are $m=4$ edges and $d_c=2, d_d=2$, there are exactly five possible graphs, namely the set $\cG(\pawb, cd)=\{\pawb, \; \pawc, \; \fourcycle, \; \fourcyclea, \; \fourcycleb \}$. 
Thus $cd$ is an edge in $\widetilde{R}_{cd}$ with probability $\frac45$. Hence, letting $\cA$ be the partition with parts $\{a\}, \{b\}$, and $\{c,d\}$, we have \begin{equation}\label{eq.pawb_mq}
\tq(G)=\tilde{q}_\cA(G) = (1-\frac45)/4 = \frac{1}{20} >0
\end{equation}
where the optimality follows since $cd$ is the only pair $ij$ for which we have a positive contribution to the score $w_{ij}= {\mathbf 1}_{ij\in E(G)} - \pr(ij \in E(\widetilde{R}_{ij}))$, see Fig~\ref{fig.pfour_and_pawb} (right).

Two undesirable properties of modified modularity $\tilde{q}$ are that the trivial one-part partition can have modularity score $>0$, and that isolated vertices can make a difference.
Both these properties are illustrated by the graph $K_4 +v$ $= \kfourisolb$, which is $K_4$ plus an isolated vertex $v$. For $\tq(K_4)=0$ (for example by Theorem~\ref{thm.locdetbydeg}), but the trivial one-part partition ${\cT}$ of $K_4+v$ has modified modularity score $\tilde{q}_{\cT}(K_4+v) > 0$,
as we see in the next paragraph.

Indeed, let $n \geq 4$, start with the complete graph $K_n$, and remove $t$ edges incident with one vertex~$v$, where $t=2$ if $n=4$ and $2 \leq t \leq n-1$ if $n \geq 5$; and call the resulting graph $G$. (When $n=4$, $G$ is the graph $\pawb$, which we discussed above; and when $n=5$ and $t=4$, $G$ is $K_4 +v$.) Then  the trivial one-part partition $\cT$ gives $\tilde{q}_\cT(G) >0$.  To see this, consider a `missing edge' $vw$ (not in $G$), and note that no graph $\tilde{R}_{vw}$ can contain this edge, since then it would have too few edges. Thus $\tilde{q}_\cT(G) \geq 0$. Further, let the vertex $x$ be such that $vx$ is  also a missing edge (with $x\neq w)$: then some graph $\tilde{R}_{wx}$ in $\cG(G, wx)$ does not contain the edge $wx$, since we can obtain such a graph by deleting the edge $wx$ from $G$, adding edges $vw$ and $vx$, and finally deleting an edge not incident with $w$ or $x$.  This yields a strictly positive contribution to the score, and so $q_\cT(G) > 0$.

To conclude here, we note that (unlike $\sq$) we can have $\tq(G) > 0$ and $\tq(\overline{G}) = 0$.
Again, take $G=\pawb$, and we have $\q(G)=1/20>0$.
Note that $\overline{G}=\copawc$ is a star on $a,c,d$ with centre $a$, and an isolated vertex $b$. Thus by Theorem~\ref{thm.locdetbydeg} we have $\q(\overline{G})=0$.

Observe that $\copawc$ is locally edge-determined, i.e.\ for both edges $ac$ and $ad$ in $\copawc$, the sets 
$\cG(\copawc, ac) =\{ \copawc, \; \copawf \} $ 
and $\cG(\copawc, ad) =\{ \copawc, \; \copawe \}$ contain only graphs which contain the edges $ac$ and $ad$ respectively. 
However, for the non-edge $cd$ in $\copawc$, we have $\cG(\copawc, cd)= \{\copawc, \; \copawd, ,\; \cofourcyclec, \; \cofourcycle, \; \cofourcycleb \}$ (i.e. the complements of the graphs in~$\cG(\pawb, cd)$), which includes
a graph with $cd$ an edge, and a graph with $cd$ a non-edge.
In general, the graphs in $\cG(\overline{G},ij)$ are exactly the complements of the graphs in $\cG(G,ij)$.  Thus for $\widetilde{R}_{ij}(\overline{G}))$ we may take the complement of $\widetilde{R}_{ij}(G)$; and so
\[ \pr\left(ij \in E(\widetilde{R}_{ij}(G))\right) +  \pr\left(ij \in E(\widetilde{R}_{ij}(\overline{G}))\right) =1.\]
Hence, as for simple modularity $sq$ in~(\ref{eqn.sqGbarG}), for any vertex-partition $\cA$ we have
\begin{equation} \label{eqn.modsum}
 \tilde{q}_\cA(G) + \tilde{q}_\cA(\overline{G}) = 0.
\end{equation}

\bigskip
\smallskip
\needspace{6\baselineskip}
{\em Loop-free modularity}

Above, we introduced simple modularity $\sq(G)$ and modified modularity $\tq$, where we `penalised' by considering a random \emph{simple} graph $R$ which is uniformly distributed on the graphs with the same degrees  as $G$, or `locally' the same degrees as $G$; 
and explored when we have $\sq(G)=0$ and when we have $\tq(G)=0$.
What if we insist that the random graph has no loops but it is allowed multiple edges?

Let $R_0$ be a multigraph, with no loops, uniformly distributed on such multigraphs on $V(G)$ with the same degrees as $G$.
For a vertex-partition $\cA$ of~$G$, define the corresponding
`loop-free modularity'~$\lfq_{\cA}(G)$ by
\begin{equation}\label{defn.lfq} \lfqA(G) = (1/m) \left(e^{\rm int}_{\cA} (G) - \E[e^{\rm int}_{\cA} (R_0)] \right)\,.
\end{equation}
Let $\lfq(G)$ be the maximum value of $\lfqA(G)$ over all vertex-partitions $\cA$,
and observe that $0 \leq \lfq(G)<1$.
Let $S$ be chosen uniformly at random over all simple graphs with the same degrees as~$G$.} %
When $G$ is a regular graph with $m$ edges,  the expected number of edges between any distinct pair of vertices is $m/\binom{n}{2}$ for both the simple graph $S$ and the loop-free graph $R_0$. Hence for regular graphs $G$, we have $\E[e_{\cA}^{\rm int}(R_0)] = \E[e_{\cA}^{\rm int}(S)]$,
and so $\lfq(G)=sq^*(G)$.
We saw in Proposition~\ref{prop.sq} exactly which graphs $G$ have $\sq(G)=0$: a small subset of these graphs have~$\lfq(G)=0$.
\begin{prop} \label{prop.lfq}
For any graph $G$, $\lfq(G)=0$ if and only if $G$ less any isolated vertices is a star or a complete graph.
\end{prop}
\begin{proof}
Sufficiency is easy. Let $G^-$ denote $G$ less any isolated vertices. If $G^-$ is a star then $R_0$ must be~$G$, so  $\lfqA(G)=0$ for each vertex-partition $\cA$, and $\lfq(G)=0$.  If $G^-$ is complete and $e \in E(G^-)$, then by symmetry the expected number of times $e$ appears in $R_0$ is 1.  Thus $\lfqA(G)=0$ for each vertex-partition $\cA$, and $\lfq(G)=0$.

To prove necessity we first introduce a weakened version of a 2-switch.  
If $a,b,c,d$ are distinct vertices in a graph $G$ and $ab, cd \in E(G)$ and $bc \in E(\overline{G})$ we call this a $2^{-}$-switch (or an `alternating~$P_3$').  (There is no restriction on $ad$.)  Observe that a 2-switch is a $2^{-}$-switch but not necessarily conversely.
Now we may argue as in the proof of Proposition~\ref{prop.sq}.
Suppose that $G$ has a $2^{-}$-switch on $a,b,c,d$.  Then we may form a multigraph $G'$ with the same degrees as $G$ by deleting the edges $ab, cd$ and adding~1 to the number of copies of $bc$ and $ad$. Thus the probability that $bc$ is an edge in~$R_0$ is $>0$, so there must be an edge $vw$ of $G$ such that $\E[e_{R_0}(vw)]<1$. Let $\cA$ be the partition of $V(G)$ with one part $\{v,w\}$ and each other part a singleton. Then $e^{\rm int}_\cA(G)=1$ and $\E[e^{\rm int}_\cA(R_0)]<1$, so $\lfq(G) \geq \lfqA(G)>0$.

Now suppose that $\lfq(G)=0$, and so $G$ has no $2^{-}$-switch.   Again, let $G^-$ denote $G$ less any isolated vertices. We may assume that $G^-$ has at least 4 vertices, and is not a star. Thus there are two disjoint edges; and since there is no $2^{-}$-switch, any two disjoint edges are part of a complete graph $K_4$. In particular $G^-$ is connected.
Let $W$ with $|W| \geq 3$ be a maximal set of vertices in $G^-$ which induce a complete graph.  If some vertex is not in $W$ then there is an edge $ab$ with $a \not\in W$ and $b \in W$.  Let $c \in W$, $c \neq b$.  Then $ac \in E( G^-)$ since there exists $d \in W \setminus \{b,c\}$ and $a,b,c,d$ do not form a $2^{-}$-switch.  Thus vertex $a$ is adjacent to each vertex in $W$, contradicting the maximality of $W$.  Hence $G^-$ is complete, and we are done.
\end{proof}
Recall that $f^{\rm lfq}(n)$ is the minimum number of edges that may be deleted from $K_n$ to form a graph $G$ with $\lfq(G)>0$.
An easy corollary of Proposition~\ref{prop.lfq} is that
\begin{equation}\label{eq:f_loop_free_mod} 
f^{\rm lfq}(n)=1 \;\;\;
\mbox{for each } n\geq 4\,.
\end{equation}
\smallskip

Consider an $m$-edge graph $G$, with a given vertex-partition. In each of the above four variant forms of modularity -- namely (precise) configuration modularity $cq$, simple modularity $sq$, modified modularity $\widetilde{q}$, and loop-free modularity $l\!f q$ --  the score is penalised by using the expected behaviour of a suitable random graph or pseudograph. 
We specify a set $\cH$ of pseudographs on $V$, each with the same number $m$ of edges as $G$, and also with the same vertex-degrees as $G$ (except that, for modified modularity, we have only `locally' the same degrees). Then we let the random pseudograph~$R$ be uniformly  distributed on $\cH$ (for $sq$, $\widetilde{q}$ and $l\!f q$), or at least nearly uniformly distributed on $\cH$ (for $cq$, assuming suitable vertex degrees).
For any partition $\cA$ of $V$, the corresponding modularity score is defined to be
\begin{equation}
\label{defn.R_modularity}
q^{R}_{\cA}(G) = \tfrac1{m}( e^{\rm int}_{\cA}(G) - \E[e^{\rm int}_\cA(R)])\,. 
\end{equation}
The fifth and final modularity variant we consider here is quite different, with a simple form of degree tax.
\smallskip

\needspace{6\baselineskip}
{\em  Erd\H{o}s-R\'enyi modularity}

This form of modularity has been studied and used for several years, see for example~\cite{gacksgens2023hyperspherical, zhao2012consistency}.
For a graph $G$ and vertex-partition $\cA$, the Erd\H{o}s-R\'enyi modularity score is defined to be
\begin{equation}\label{defn.ERq}  ERq_{\cA}(G) =  e^{\rm int}_{\cA}(G)/m - \sum_A |A|^2/n^2 ;\end{equation}
and the Erd\H{o}s-R\'enyi modularity $ERq^*(G)$ is the maximum value of this score over all vertex-partitions $\cA$. Observe that $0 \leq ERq^*(G) <1$.
If $G$ is regular then clearly $ERq_{\cA}(G) = q_{\cA}(G)$ for each vertex-partition $\cA$, and $ERq^*(G) = \q(G)$.   In particular we see that $ ERq^*(K_n) =0$, and if $n$ is even and $G$ is obtained from $K_n$ by deleting a perfect matching then $ERq^*(G) =0$.

However, we will see later, in Proposition~\ref{prop.det_ER_mod0}, that it is enough to delete just two edges from $K_n$ to obtain a graph with positive Erd\H{o}s-R\'enyi modularity. For an easy example, let $n=3$ and let $G$ be obtained by deleting two edges from $K_3$, so $G$ consists of an edge and isolated vertex, $G = \, \cocherry$; and let $\cA$ be the bipartition which takes the adjacent vertices as one part and the isolated vertex as the other.
Then $ERq_\cA (G) = 1 - (1/3)^2 - (2/3)^2 = 4/9$. Note that this last example shows that adding an isolated vertex can change the Erd\H{o}s-R\'enyi modularity, since for $G'$ a single edge $\edge$ we have $ERq^*(G')=0$.

\bigskip

The standard modularity score $q_\cA(G)$ and the Erd\H{o}s-R\'enyi modularity score $ERq_\cA(G)$ are both examples of a `modularity score with weighted degree tax', or a `modularity score penalised by weighted part sizes'. Let the graph $G$ have vertex set $V=[n]$, and let ${\bf c} = (c_1,\ldots,c_n)$ be given vertex weights $c_j>0$.  For $A \subseteq V$ let $c(A)= \sum_{j \in A} c_j$. Given a partition $\cA$ of $V$, the corresponding `weighted tax on parts' or `${\bf c}$-penalty' is $\sum_{A \in \cA} c(A)^2 / c(V)^2$; and the corresponding modularity score is \[ q^{\bf c}_{\cA}(G) = \tfrac1{m} e^{\rm int}_\cA(G) - \sum_{A \in \cA} c(A)^2 / c(V)^2.\]
Observe that, if each $c_j$ is the degree of vertex $j$ then $q^{\bf c}_{\cA}(G)$ is the standard modularity score $q_\cA(G)$; and if each $c_j=1$ then $q^{\bf c}_{\cA}(G)$ is the Erd\H{o}s-R\'enyi modularity score $ERq_\cA(G)$.  Thus the versions of modularity we have considered above (including the standard modularity) are particular cases of either an $R$-penalised modularity or a ${\bf c}$-penalised modularity.
\smallskip

\subsection{Edit distance from complete graphs to a positive modularity graph}
\label{subsec.edit}
Recall that $f^{\rm cq}(n)$ is the least number of edges missing from any $n$-vertex graph $G$ with $cq^*(G)>0$ (that is, with positive precise configuration modularity); and similarly for $f^{\mathrm{ERq}}(n)$.
In this section we determine $f^{\rm cq}$ and $f^{\rm ERq}$.
Note that analogous results for simple modularity $sq$, modified modularity $\tilde{q}$, and loop-free modularity $l\!fq$ have already been established, since they followed directly from other results in the previous subsection, see~\eqref{eq:f_simp_mod}, \eqref{eq:f_mod_mod} and~\eqref{eq:f_loop_free_mod}. 

\medskip
\needspace{6\baselineskip}
{\em  Configuration modularity $cq$}

\begin{prop}\label{prop.det_config_mod0} 
For each $n \geq 4$ we have $f^{\rm cq}(n)=\lceil n/2 \rceil$.
\end{prop}

We first prove a lemma analogous to Lemma~\ref{lem.pZero}.  Let $G=(V,E)$ be a graph. Similarly to $p_G(U)$, for $U \subseteq V$, let 
\[\tp_G(U) = -e(U, \overline{U})(\vol(G)-1) + \vol(U)\vol(\overline{U}).\] 
Observe that $\tp_G(\emptyset) = \tp_G(V) =0$ (as for $p_G$).
\begin{lemma}\label{lem.pcZero} 
Let $G$ be a graph with vertex set $V$ and with $m\geq 1$ edges. Then the following three statements are equivalent: 
(i) $\cq(G)=0$,
(ii) $cq_\cA(G) \leq 0$ for each bipartition $\cA$ of $V$, and 
(iii) $\tp_G(U) \leq 0$ for each $U \subseteq  V$.
\end{lemma}

\begin{proof}
Observe that for any $A \subseteq V$, $\vol(A)-1 = 2m-1-\vol(\overline{A})$. Thus for any partition $\cA$ of $V$,
\begin{eqnarray*}
    cq_\cA(G) & = & 1 - \frac{1}{2m}\sum_A e(A, \overline{A}) - \frac{1}{2m(2m\! -\! 1)}\sum_A \vol(A)(2m-1-\vol(\overline{A})) \\
    & = & - \frac{1}{2m}\sum_A e(A, \overline{A})\; + \; \frac{1}{2m(2m\! -\! 1)}\sum_A \vol(A)\vol(\overline{A})\,; \\
    \end{eqnarray*} 
and so, 
writing $\nu=\vol(G) \:(=2m)$, 
\[cq_{\cA}(G) = \nu^{-1}(\nu-1)^{-1}  \sum_{A \in \cA} \tp_G(A).\] 
But $\tp_G(U)$ is symmetric in $U$ and~$\overline{U}$, so $\tp_G(U) = \tp_G(\overline{U})$, and the lemma follows.
\end{proof}

\begin{proof}[Proof of Proposition~\ref{prop.det_config_mod0}]
To prove the lower bound we must show that if $n \geq 4$ and we form $G$ by removing $x < \lceil n/2 \rceil$ edges from $K_n$ then $c\q(G)=0$. 
Note that $x \leq (n-1)/2$, and so $G$ has minimum degree $\delta_G \geq  (n-1)/2$, and in particular $\delta_G\geq 2$ since $n\geq 4$.

We first note an inequality (corresponding roughly to~\eqref{eqn.mono}).
Let $\emptyset \neq U \subset V$, and note that there is an edge {$e$} in $G$ between $U$ and $\overline{U}$ (since $n-1-x \geq (n-1)/2>0$).
Suppose that there is a missing edge within $U$ or $\overline{U}$, and form $G'$ by `moving' the edge $e$ to the location of such a missing edge.
Then
\begin{equation} \label{eqn.config_mono}
\tp_{G'}(U) > \tp_G(U).
\end{equation}
To prove this, we may assume that the edge $e$ is added to $U$.
Since $e_{G'}(U,\overline{U})=e_{G}(U, \overline{U})-1$, $\vol_{G'}(U)=\vol_G(U)\!+\!1$ and $\vol_{G'}(\overline{U})=\vol_G(\overline{U})\!-\!1$, we have
\begin{eqnarray*}
    \tp_{G'}(U)- \tp_{G}(U)
    & = & (2m-1)(e_G(U, \overline{U}) - e_{G'}(U, \overline{U})) \\
    && + (\vol_G(U)\!+\!1)(\vol_G(\overline{U})\!-\!1) - \vol_G(U)\vol_G(\overline{U})\\
    & = & 2m-1 + \vol_G(\overline{U}) - \vol_G(U) \; -1 .
\end{eqnarray*}
But $\vol_G(\overline{U}) \geq \delta_G \geq 2$, and so $\vol_G(U)\leq 2m-2$
Hence
\[  \tp_{G'}(U)- \tp_{G}(U) \geq 2
>0, \]
and~(\ref{eqn.config_mono}) follows.

As in the proof of Theorem~\ref{thm.detmod0}, suppose that the graph $H$ on $V=[n]$ and $\emptyset \neq U \subset V$ are such that $\tp_H(U)$ maximises the value $\tp_G(W)$ over all graphs $G$ on $V$ with at most $(n-1)/2$ edges missing and all $\emptyset \neq W \subset V$. There must be edges between $U$ and $\overline{U}$; so by inequality\,(\ref{eqn.config_mono}), all missing edges are between $U$ and $\overline{U}$.

Suppose that $|U|=j$ (where $1 \leq j \leq n-1$), each missing edge in $H$ is between $U$ and $\overline{U}$, and $x$ edges are missing from $H$, where $0 \leq x < \lceil n/2 \rceil$. 
Then
\begin{eqnarray*}
\tp_H(U) &=&
\vol_H(U) \vol_H(\overline{U})  - e_H(U, \overline{U}) (\vol(H)-1) \\
&=&
(j(n-1)-x)((n-j)(n-1)-x) - (j(n-j)-x)(n(n-1)-2x-1)\\
&=&
j(n-j)(2x-(n-2)) -x^2-x.
\end{eqnarray*}
Thus $\tp_H(U)<0$ if $x \leq (n-2)/2$. Suppose that $n$ is odd, and consider the case that $x=(n-1)/2$. Then $\tp_H(U)=  - (n-2j+1)(n-2j-1)/4$; and so for all integer $j$, $\tp_H(U)\leq 0$ with equality when $j=(n-1)/2$ or $j=(n+1)/2$. 
Hence for $x < \lceil n/2 \rceil$, we have $\tp_H(U)\leq 0$, which by Lemma~\ref{lem.pcZero} proves the lower bound.

We now prove the upper bound. For odd $n$, we may take $x=(n+1)/2$ and $j=(n-1)/2$, and then $\, \tp_H(U)=(n+1)(n-3)/2$. Thus for odd $n\geq 4$, there is a graph $G$ with $(n+1)/2$ edges missing and $c\q(G)>0$.
Similarly, for even $n\geq 4$, we may take $x=j=n/2$, and then $\tp_H(U) = n(n-2)/4$.  Thus, for even $n \geq 4$, there is a graph $G$ with $n/2$ edges missing and $c\q(G)>0$.
\end{proof}

\medskip

\needspace{6\baselineskip}
{\em  Erd\H{o}s-R\'enyi modularity $ ERq^*$}
\smallskip

\begin{prop}\label{prop.det_ER_mod0} 
For each $n \geq 3$ we have $f^{\rm ERq}(n)=2$.
\end{prop}

As for configuration modularity, we first prove a lemma analogous to Lemma~\ref{lem.pZero}. For a graph $G = (V,E)$ with $n$ vertices and $m$ edges, for $U \subseteq V$, let 
\[r_G(U) = -e(U, \overline{U})\,n^2 + |U||\overline{U}|\,2m.\] 
Observe that $r_G(\emptyset) = r_G(V) =0$ (as for $p_{G}$ and $\tp_{G}$).

\begin{lemma}\label{lem.pER_Zero} 
Let $G$ be a graph with $m\geq 1$ edges. Then the following three statements are equivalent: (i) $ER\q(G)=0$, (ii) $ERq_\cA(G) \leq 0$ for each bipartition $\cA$ of $V(G)$, and (iii) $r_G(U) \leq 0$ for each $U \subseteq V(G)$.
\end{lemma}

\begin{proof}
Begin by noting that for any vertex-partition $\cA$, 
\begin{eqnarray*}
    ERq_\cA(G) & = & 1 - \frac{1}{2m}\sum_{A \in \cA} e(A, \overline{A}) - \frac{1}{n^2}\sum_{A \in \cA} |A|(n-|\overline{A}|) \\
    & = & - \frac{1}{2m}\sum_{A \in \cA} e(A, \overline{A}) \, +\, \frac{1}{n^2}\sum_{A \in \cA} |A||\overline{A}|\\
    & = & \frac{1}{2mn^2} \sum_{A \in \cA} r_G(A).
    \end{eqnarray*} 
Since $r_G(U)$ is symmetric in $U$ and~$\overline{U}$, we have $r_G(U) = r_G(\overline{U})$,
and the lemma follows.
\end{proof}

\begin{proof}[Proof of Proposition~\ref{prop.det_ER_mod0}]
To prove the lower bound we must show that, if $n \geq 3$ and we form $G$ by removing one or zero edges from $K_n$, then $ERq^*(G)=0$. 

Again, we note an inequality (corresponding to~\eqref{eqn.mono}).
Let $G$ be a graph on vertex set $V$, let $\emptyset \neq U \subset V$, and form $G'$ by `moving' an edge currently between $U$ and $\overline{U}$ to the location of a missing edge within $U$ or $\overline{U}$.  Then (noting that the sizes of $U$ and $\overline{U}$ do not change),
\begin{eqnarray*}
    r_{G'}(U)
    & = & - e_{G'}(U, \overline{U}) n^2 + |U| |\overline{U}| \, 2e(G') \\
    & = & - (e_{G}(U, \overline{U})-1) n^2 + |U| |\overline{U}| \, 2e(G) ,
\end{eqnarray*}
and so
\begin{equation} \label{eqn.ER_mono} 
     r_{G'}(U) >  r_{G}(U).
\end{equation}
Again, as in the proof of Theorem~\ref{thm.detmod0}, suppose that the graph $H$ on $V=[n]$ and $\emptyset \neq U \subset V$ are such that $r_H(U)$ maximises the value $r_G(W)$ over all graphs $G$ on $V$ with at most one edge missing.
Note that if there is a missing edge then by~\eqref{eqn.ER_mono} it is between $U$ and $\overline{U}$.

Suppose that $|U|=j$ (where $1 \leq j \leq n-1$) and $x$ edges are missing from $H$, where $x\in\{0,1\}$. Then
\begin{eqnarray*}
r_H(U) &=& - e_H(U, \overline{U})\, n^2
+ |U| |\overline{U}|\, 2m   \\
&=&
-\big(j(n-j) -x\big)\, n^2 + j(n-j)\big( n(n-1) -2x \big)
\\
&=&
-j(n-j)(n +2x) + n^2x. 
\end{eqnarray*} 
Clearly $r_H(U)<0$ for $x=0$\,; and for $x=1$, noting that $j(n-j) \geq n-1$, 
we have 
\[ r_H(U) = - j(n-j)(n+2) + n^2 \leq -(n-1)(n+2) +n^2 = -(n-2) <0 \]
since $n \geq 3$. This proves the lower bound, that $f^{\rm ERq}(n) \geq 2$.

To see the upper bound, let $n \geq 3$ and consider the graph $G$ formed from $K_n$ by picking a vertex $v$ and deleting two of the incident edges.  Let $\cA$ be the bipartition with one part $\{v\}$, and note that, since $x=2$ and $j=|\{v\}|=1$, we have 
\[ r_G(\{v\}) = -(n-1)(n+4) +2n^2 = n^2-3n+4 = (n-1)(n-2)+2 >0;\]
and hence by Lemma~\ref{lem.pER_Zero}, the bipartition $\cA$ has positive Erd\H{o}s-R\'enyi modularity. 
\end{proof}


\section{Concluding remarks}\label{sec.concl}
We considered graphs with modularity zero, with starting point the complete graphs and complete bipartite graphs. For each such complete graph, we found the least number of edges which we may add to or delete from or edit to obtain a graph with non-zero modularity.

Complete multipartite graphs also have modularity zero but some corresponding questions for them are still open - see Section~\ref{subsec.multi}. 
It is also open, see Question~\ref{q.far}, whether complete graphs are the furthest (in edit distance) from graphs with positive modularity.

We also investigated the modularity of very dense random graphs, and in particular we found that there is a transition to modularity zero when the average degree of the complementary graph drops below 1.

Finally, in Section~\ref{sec.mod_var}, we considered some natural variant definitions of modularity, found the distance from a complete graph to one with positive modularity value for each variant, and for simple modularity~$\sq(G)$, modified modularity~$\tq(G)$, and loop-free modularity $\lfq(G)$, we found precisely which graphs have corresponding modularity zero. See also~\cite{reimaginingdegreetax}, where we investigate further properties of these variants, for example when we limit the number of parts.

\medskip

{\bf Acknowledgements.}
We would like to thank the referees for helpful and interesting comments.  One such comment was that the standard definition of modularity penalised partitions with small parts, since the degree tax was based on random graphs which may have loops. This led to the study in Section~\ref{sec.mod_var} of some variant definitions, and when the corresponding modularity value is $0$. %

FS was partially supported by the Wallenberg AI, Autonomous Systems and Software Program WASP. This material is partially based upon work supported by the National Science Foundation under Grant No.\ DMS-1928930, while FS was in residence at the Simons Laufer Mathematical Sciences Institute (MSRI) in Berkeley, California, during the Spring 2025 semester.

\bibliographystyle{plain}
\bibliography{articles}
\end{document}